\definecolor{darkgreen}{rgb}{0,0.5,0}
\definecolor{darkred}{rgb}{0.7,0,0}
\definecolor{darkblue}{rgb}{0,.2,.7}
\definecolor{darkyellow}{rgb}{1, 0.75, 0}
\definecolor{Ggreen}{RGB}{76,153,0}
\theoremstyle{plain}      
\newtheorem{thm}{Theorem}
\newtheorem{theorem}[thm]{Theorem}    
\newtheorem*{theorem*}{Theorem}    
\newtheorem{corollary}[thm]{Corollary}          
\newtheorem{lemma}[thm]{Lemma}
\newtheorem{proposition}[thm]{Proposition}     
\theoremstyle{definition}      
\newtheorem{definition}[thm]{Definition}
\newtheorem{remark}[thm]{Remark}
\newtheorem*{rep@theorem}{\rep@title}
\newcommand{\newreptheorem}[2]{%
\newenvironment{rep#1}[1]{%
 \def\rep@title{#2 \ref{##1}}%
 \begin{rep@theorem}}%
 {\end{rep@theorem}}}
\newcommand{\define}{\textrm{:=}}
\renewcommand{\ker}{\text{ker}\,}
\newcommand{\dvol}{\text{dvol}}
\newcommand{\R}{\mathcal{R}}
\newcommand{\Riem}{\mathrm{Riem}(M)}
\newcommand{\Ri}{\mathrm{Riem}(M\setminus S)}
\newcommand{\RM}{\mathcal{R}^\text{inv}(M)}
\newcommand{\Riemf}{{\mathrm{Riem}}^{\text{inv}}_{\frac{1}{2}\text{flat}}(M)}
\newcommand{\Rf}{\mathcal{R}^{\text{inv}}_{\frac{1}{2}\text{flat}-D_1}(M)}
\newcommand{\Rc}{\mathcal{R}^{\text{inv}}_{cyl}(M\setminus S)}
\newcommand{\Rcrev}{\mathcal{R}^{\text{inv}}_{cyl}(\widetilde{M}\setminus \widetilde{S})}
\newcommand{\Riemcu}{{\mathrm{Riem}}_{cyl}(M\setminus S)}
\newcommand{\adh}{S}
\author[N. Gro\ss e]{Nadine Gro{\ss}e} \address{N. Gro{\ss}e, Mathematisches Institut,
 Universit\"at Freiburg, 79104 Freiburg, Germany}
\email{nadine.grosse@math.uni-freiburg.de}
\author[N. Pederzani]{Niccol{\`o} Pederzani} \address{N. Pederzani, Liceo Scientifico
Attilio Bertolucci, Parma, Italy}
\email{niccolo.pederzani88@gmail.com}
\begin{document}

\title[Homotopy equivalence of spaces of metrics with invertible Dirac]{Homotopy equivalence of spaces of metrics with invertible Dirac operator}
\begin{abstract}
We prove that for  cobordant closed spin manifolds of dimension $n\geq 3$ the associated spaces of metrics with invertible Dirac operator are homotopy equivalent. This is the spinorial counterpart of a similar result on positive scalar curvature of Chernysh/Walsh and generalizes the surgery result of Ammann-Dahl-Humbert on the existence of metrics with invertible Dirac operator under surgery. We also give a relative statement of this homotopy equivalence.
 \end{abstract}

\maketitle
\section{Introduction}

Let $(M,g)$ be a closed spin Riemannian manifold $M$ of dimension $n$. Let $D^g$ be the associated (classical) Dirac operator.\medskip 

From the Atiyah-Singer index theorem it is known that the index of the Dirac operator on a closed manifold is a differential topological invariant of the manifold. This leads to a lower bound of the dimension of the kernel of the Dirac operator depending on the dimension $n$, \cite[Sec.~3]{BD}, 
\begin{align*}
 \dim \ker D^g \geq \left\{  \begin{matrix} 
                              |\hat{A}(M)| & n\equiv 0 \text{ mod }4\\
                              1 &n\equiv 1 \text{ mod }4, \alpha(M)\neq 0\\
                             2&n\equiv 2 \text{ mod }4,  \alpha(M)\neq 0\\
                             0 &\text{else}, 
                             \end{matrix}
\right.
\end{align*}
where the $\hat{A}$-genus and the $\alpha$-genus are invariants of the spin bordism class of $M$.\medskip 

The question of (non)existence of metrics with harmonic spinors is related to the question of existence of metrics with positive scalar curvature via the Schrödinger-Lichnerowicz formula 
\[ (D^g)^2 =  \nabla^*\nabla +\frac{\text{scal}_g}{4},
\]
where $\nabla$ is the lifted Levi-Civita connection on the spinor bundle. More precisely, if $g$ has positive scalar curvature, then $D^g$ has to be invertible as an operator on the Hilbert space of $L^2$-spinors to itself. In particular, the space $\mathcal{R}^{\text{pos}}(M)$ of metrics on $M$ with positive scalar curvature is a subset of the space $\RM$ of metrics on $M$ with invertible Dirac operator.\label{page:Rinv}\medskip 

  \begin{figure}
\begin{tikzpicture}[scale=0.59]

\draw[line width=1] plot [smooth cycle] coordinates {(0,0) (1,1) (3,1) (5,-1) (3,-3)(0,-2)};
\draw[line width=1] (1.5,-1) .. controls (2.3,-2) .. (3.3,-1.5);
\draw[line width=1] (1.8,-1.4) .. controls (2.3,-1.) .. (3,-1.6);
\draw (0.5,-3) node {$M$};
\draw[->] (7,2.3) .. controls (7.5,1.7) .. (4,0);
\draw (5.5,1.1) node {$\iota$};

\draw[->] (6,-0.8)--(11,-0.8);
\draw (8.3, -0.3) node {cut out image$(\iota)$};
\draw (8.4, -1.3) node {glue in $S^{n-k-1}\times D^{k+1}$};

\draw (6.8, 2.6) node {$D^{n-k}\times S^k$};

\draw[fill, opacity=0.2]  (2,0) circle (0.3cm);
\draw (2,0) circle (0.03cm);

\draw[fill, opacity=0.2]  (3.5,-0.5) circle (0.3cm);
\draw (3.5,-0.5) circle (0.03cm);

\draw[line width=1, xshift=12cm] plot [smooth cycle] coordinates {(0,0) (1,1) (3,1) (5,-1) (3,-3)(0,-2)};
\draw[line width=1, xshift=12cm] (1.5,-1) .. controls (2.3,-2) .. (3.3,-1.5);
\draw[line width=1, xshift=12cm] (1.8,-1.4) .. controls (2.3,-1.) .. (3,-1.6);
\draw[ xshift=12cm] (0.5,-3) node {$\widetilde{M}$};

\draw[fill, color=white, xshift=12cm](1.7,0) .. controls (1.7, 2.2) and (4.4,2.5) .. (3.8,-0.5)  --
(3.15,-0.5) .. controls (4.4,1.6) and (1.9, 1.5)  .. (2.3,-0.1)-- (1.7,0);

\draw[xshift=12cm, line width=1] (1.7,0) .. controls (1.7, 2.2) and (4.4,2.5) .. (3.8,-0.5);
\draw[xshift=12cm, line width=1] (2.3,-0.1) .. controls (1.9, 1.5) and (4.4,1.6) .. (3.15,-0.5);
\draw[xshift=12cm, dashed] (3.2,1.34) ellipse [x radius=0.15cm,y radius=0.33cm];

\draw[xshift=12cm, dashed]  (2,0) circle (0.3cm);
\draw[xshift=12cm, dashed]  (3.5,-0.5) circle (0.3cm);

\end{tikzpicture}
\caption{$\widetilde{M}$ arises from $M$ by a \emph{surgery of dimension $k$ (= codimension $n-k$)} along the \emph{surgery sphere} $S\define \iota(\{0\}\times S^k)$ where $\iota$ is an embedding of the $(n-k)$-disk $D^{n-k}$ times the $k$-sphere $S^k$. (The picture is for $k=0$ and $n=2$). This surgery is called \emph{spin-sugery} if the $S^{n-k-1}$ factor (in the part that is glued in) is equipped with the bounding spin structure, i.e. the spin structure obtained by restricting the unique spin structure on $D^{n-k}$. For $n-k > 2$ this spin structure is unique anyway (we always fix the orientations).}
\label{fig:surg} 
\end{figure}
This relation in mind, there might be a chance to obtain theorems/constructions known in the setting of positive scalar curvature also for the invertibility of Dirac operators. For the surgery result of Gromov-Lawson for positive scalar curvature this was done by Ammann, Dahl and Humbert in \cite{ADH}. More precisely, let there be  an embedded sphere $S^k$ in $M$ with trivial normal bundle $\nu_{S^k}\cong D^{n-k}\times S^k$ (we choose the trivialization once and for all). Here, $D^\ell$ is the $\ell$-dimensional unit disk. Then a surgery of codimension $n-k$ corresponds topologically to setting $$\widetilde M=M\setminus \iota\left(D^{n-k}\times S^k\right)\underset{S^k\times S^{n-k-1}}{\cup}\left( S^{n-k-1}\times D^{k+1} \right),$$
cp. Figure~\ref{fig:surg}. In \cite{GL} Gromov and Lawson showed that having a metric $g$ on $M$ with positive scalar curvature and an $\widetilde{M}$ obtained from  $M$ via a surgery of codimension $n-k\geq 3$ there is always a metric on $\widetilde{M}$ with positive scalar curvature. A similar statement for the kernel of the Dirac operator, but using different techniques, was obtained in \cite{ADH} by Ammann, Dahl and Humbert: If $\widetilde{M}$ is obtained from $M$ via a spin surgery of codimension $n-k\geq 2$, then for every Riemannian metric $g$ on $M$ there is a metric $\tilde{g}$ on $\widetilde{M}$ with 
\[ \dim\, \ker\, D^{\tilde{g}}\leq \dim\, \ker\, D^{{g}}.\]
For an intuition why here a lower codimension can be assumed see Section~\ref{sec:surg}. Using bordism techniques and sufficient knowledge on enough model manifolds, this result  implies that on all connected spin manifolds there is a metric $g$ such that the lower bound on $\dim\, \ker\, D^g$ is attained, \cite[Thm. 4.1]{ADH}. See also \cite[Thm. 3.9]{BD} for the case $n\geq 5$.\medskip

The Gromov-Lawson result  was generalized to a statement on the homotopy type of the space of metrics with positive scalar curvature by  V. Chernysh and M. Walsh. For that $\mathcal{R}^{\text{psc}}(M)$ is equipped with the compact-open $C^\infty$-topology. 

\begin{theorem*}\cite{Chernysh, Walsh}
 Let $M^n$ and $\widetilde M$ be two closed manifolds of dimension $n$ obtained one another via a sequence of surgery transformations of dimension $2\leq k \leq n-3$. Then the corresponding spaces of Riemannian metrics with positive scalar curvature have
  the same homotopy type: $$\mathcal{R}^{\text{psc}}(M)\simeq\mathcal{R}^{\text{psc}} (\widetilde M).$$
\end{theorem*}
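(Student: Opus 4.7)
The plan is to reduce to a single surgery and then sandwich both $\mathcal{R}^{\text{psc}}(M)$ and $\mathcal{R}^{\text{psc}}(\widetilde M)$ between two copies of a common ``model'' subspace of metrics that have been standardised near the surgery region. So I assume $\widetilde M$ arises from $M$ by one surgery of dimension $k$ with $2 \leq k \leq n-3$, with tubular neighbourhood $N = \iota(S^k \times D^{n-k})$ and dual handle $\widetilde N = S^{n-k-1} \times D^{k+1}$. Fix once and for all a model product ``torpedo'' metric $g_N$ on $N$ (round on $S^k$ times a rotationally symmetric torpedo on $D^{n-k}$), and analogously a metric $g_{\widetilde N}$ on $\widetilde N$. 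Because $n-k \geq 3$ and $k+1 \geq 3$, both $g_N$ and $g_{\widetilde N}$ can be chosen with strictly positive scalar curvature and with a product collar near the common boundary $\partial N \cong S^k \times S^{n-k-1} \cong \partial \widetilde N$. Let $\mathcal{R}^{\text{psc}}_{\mathrm{std}}(M) \subset \mathcal{R}^{\text{psc}}(M)$ be the subspace of metrics which equal $g_N$ on $N$, and define $\mathcal{R}^{\text{psc}}_{\mathrm{std}}(\widetilde M)$ analogously.

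The heart of the argument, and the expected main obstacle, is a parametric Gromov--Lawson theorem: the inclusion
\[
\mathcal{R}^{\text{psc}}_{\mathrm{std}}(M) \hookrightarrow \mathcal{R}^{\text{psc}}(M)
\]
is a weak homotopy equivalence. Given a continuous map $\Phi \colon (D^{j+1}, S^{j}) \to (\mathcal{R}^{\text{psc}}(M), \mathcal{R}^{\text{psc}}_{\mathrm{std}}(M))$, I would construct a homotopy through PSC metrics that pushes the whole family into the standard subspace while keeping $\Phi|_{S^j}$ fixed. Classically, the Gromov--Lawson construction bends the metric along a curve in the normal direction to the surgery sphere and inserts a torpedo; this depends only on a cut-off radius and a bending profile. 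Using compactness of $D^{j+1}$ one obtains uniform lower bounds on the scalar curvature and uniform geometric control, so that a single choice of cut-off and profile works for the whole family, and continuity in the parameter then gives the desired fibrewise deformation. The delicate points will be (i) upgrading the Gromov--Lawson output from mere approximate standardness to \emph{equality} with the fixed model $g_N$, which requires an additional interpolation preserving positivity of scalar curvature and for which the codimension condition $n-k \geq 3$ is sharp, and (ii) ensuring the deformation is constant on $\partial D^{j+1} = S^j$.

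Once the parametric statement is in hand, the rest is formal. Any metric in $\mathcal{R}^{\text{psc}}_{\mathrm{std}}(M)$ is determined by its restriction to $M \setminus \mathrm{int}(N)$, which is canonically identified with $\widetilde M \setminus \mathrm{int}(\widetilde N)$; replacing $g_N$ on $N$ by $g_{\widetilde N}$ on $\widetilde N$ produces a homeomorphism
\[
\mathcal{R}^{\text{psc}}_{\mathrm{std}}(M) \xrightarrow{\ \cong\ } \mathcal{R}^{\text{psc}}_{\mathrm{std}}(\widetilde M),
\]
and positivity of scalar curvature is preserved because both model pieces have PSC and their collars match by construction. Combining this identification with the parametric Gromov--Lawson equivalence applied on each side yields $\mathcal{R}^{\text{psc}}(M) \simeq \mathcal{R}^{\text{psc}}(\widetilde M)$ in the weak sense. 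Since each space is open in the Fréchet manifold of smooth metrics and therefore has the homotopy type of a CW complex, weak equivalence upgrades to genuine homotopy equivalence, and iterating over the given sequence of surgeries finishes the proof.
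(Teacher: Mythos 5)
Your outline is, in its architecture, exactly the Chernysh--Walsh strategy: introduce the subspace $\mathcal{R}^{\text{psc}}_{\mathrm{std}}(M)$ of metrics that are literally standard (round times torpedo, product collar) on the handle, prove that its inclusion into $\mathcal{R}^{\text{psc}}(M)$ is a weak homotopy equivalence by a parametric, relative Gromov--Lawson argument, identify $\mathcal{R}^{\text{psc}}_{\mathrm{std}}(M)\cong\mathcal{R}^{\text{psc}}_{\mathrm{std}}(\widetilde M)$ by swapping the standard handles (this is where both conditions $n-k\geq 3$ and $k+1\geq 3$, i.e.\ $2\leq k\leq n-3$, enter), and upgrade weak to genuine homotopy equivalence via CW/ANR homotopy type. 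Note, however, that the present paper does not prove this statement at all: it is quoted from \cite{Chernysh, Walsh} as motivation. So there is no ``paper proof'' to match; the relevant comparison is with the paper's proof of its Dirac analogue (Proposition~\ref{prop:main2}), which deliberately takes a different route: since invertibility of the Dirac operator is not a pointwise curvature condition that survives cut-and-paste, the authors do not fix model metrics on the handles but instead blow up to cylindrical-end metrics on $M\setminus S$ (the spaces $\Rf$ and $\Rc$, the map $\Upsilon_\rho$, and the grafting of torpedoes along long necks via Lemma~\ref{lem:L}), which makes the two surgery directions symmetric through $M\setminus S=\widetilde M\setminus\widetilde S$.

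As a proof, though, your proposal has a genuine gap: the ``parametric Gromov--Lawson theorem'' you isolate is not an obstacle to be expected but the entire content of the cited theorem, and the justification you sketch for it would not suffice. Compactness of $D^{j+1}$ does give uniform bounds (injectivity radius, curvature, a single admissible bending profile), but the difficulty is not uniformity of constants: it is that every ingredient of the Gromov--Lawson construction --- the normal exponential map and distance spheres of $g$, the choice of bending curve, the smoothing, and the final interpolation to the fixed metric $g_N$ --- depends on the metric, and one must organize these choices so that the resulting deformation is (a) continuous in $g$, (b) relative, i.e.\ constant (or at least preserving $\mathcal{R}^{\text{psc}}_{\mathrm{std}}(M)$) on the part of the family that is already standard, so that $\Phi|_{S^j}$ is fixed, and (c) terminating in \emph{equality} with $g_N$, which needs a further psc-preserving isotopy of the induced metric near $S$ to the standard one and is where the codimension restriction is used sharply. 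Your points (i) and (ii) flag exactly these issues but do not resolve them; compare the amount of work the present paper invests in the analogous parametric steps for the much ``softer-looking'' statements it needs (the continuous choice of $\hat\epsilon$ in Corollary~\ref{cor:deltaconstant} via covering and partition-of-unity arguments, the diffeotopy extension in Lemma~\ref{lem:stand}, the continuity of the blow-up radius in Proposition~\ref{prop:existencerho_old}). So the verdict is: right approach, correctly assembled formal skeleton (the handle-swap homeomorphism and the CW-type upgrade are fine), but the central step is asserted rather than proved, and proving it is precisely the substance of \cite{Chernysh, Walsh}.
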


This result and a generalization to families of Morse functions \cite{WI,WII}  underlie a lot of topological applications on the homotopy type of $\mathcal{R}^{\text{psc}}(M)$, see e.g. \cite{BHSW, HSS, BERW}. See also \cite{R07, S14,T16} for some surveys covering related topics.\medskip

The goal of this article is to obtain an analogous result for the space $\mathcal{R}^{\text{inv}} (M)$. In this case we expect the range of surgeries allowed to be $1\leq k\leq n-2$, which covers all the possible surgeries needed to connect two spin cobordant connected manifolds. We show

  \begin{theorem}\label{thm:main}
 Let $M^n$ and $\widetilde M^n$ be two  closed connected nonempty spin manifolds of dimension $n\geq 3$. If $\widetilde M$ is spin cobordant to $M$, then $$\mathcal{R}^{\text{inv}}(M)\simeq\mathcal{R}^{\text{inv}}(\widetilde M).$$
\end{theorem}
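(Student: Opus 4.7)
The plan is to follow the Chernysh-Walsh strategy for positive scalar curvature, substituting the quantitative Dirac surgery analysis of Ammann-Dahl-Humbert for the Gromov-Lawson scalar curvature deformations. Since $M$ and $\widetilde M$ are connected and spin cobordant with $n\geq 3$, standard handle-cancellation on a Morse function on the cobordism allows one to build $\widetilde M$ from $M$ by a sequence of spin surgeries of dimension $k$ with $1\leq k\leq n-2$: the $0$- and $(n+1)$-handles are removed using connectedness of the boundary components, and $1$- or $n$-handles can be traded for middle-index ones. It therefore suffices to establish the theorem when $\widetilde M$ is obtained from $M$ by a single such surgery along a sphere $S\subset M$ with trivial normal bundle.

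Following Chernysh, I would then identify inside $\mathcal{R}^{\text{inv}}(M)$ a subspace $\mathcal{R}^{\text{inv}}_{\text{std}}(M)$ of metrics that equal a fixed model metric on a tubular neighborhood of $S$. The model should be an ADH-type long-neck (or torpedo/cylindrical) metric on $D^{n-k}\times S^{k}$, chosen so that (i) on the tubular neighborhood alone $D^g$ has a uniform spectral gap, and (ii) the same model extends identically to a tubular neighborhood of the dual sphere $\widetilde S\subset\widetilde M$. Property (ii) supplies a canonical homeomorphism
\[\mathcal{R}^{\text{inv}}_{\text{std}}(M)\cong \mathcal{R}^{\text{inv}}_{\text{std}}(\widetilde M),\]
since the metrics in question agree on the common compact piece $M\setminus U = \widetilde M\setminus \widetilde U$ and are prescribed on the surgery neighborhoods $U,\widetilde U$. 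Thus the theorem reduces to showing that the inclusion $\mathcal{R}^{\text{inv}}_{\text{std}}(M)\hookrightarrow \mathcal{R}^{\text{inv}}(M)$ is a (weak) homotopy equivalence.

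The analytic heart of the proof, and the main obstacle, is a parameterized version of the ADH surgery construction delivering this homotopy equivalence. Given any continuous family $\{g_t\}_{t\in K}$ on a compact $K$, one must produce a continuous deformation $\{g_t^s\}_{s\in[0,1]}$ ending in $\mathcal{R}^{\text{inv}}_{\text{std}}(M)$, relative to the subset of $K$ on which $g_t$ is already standard, and staying in $\mathcal{R}^{\text{inv}}(M)$ at every $s$. Unlike positive scalar curvature, invertibility of $D^g$ is a global analytic property that can be destroyed by arbitrarily small perturbations, so the crucial step is a uniform spectral estimate: the lowest singular value of $D^{g_t^s}$ must remain bounded below in $(s,t)$. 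This requires upgrading the ADH $L^2$-concentration and Rayleigh-quotient arguments on the shrinking surgery neck to a family version, with the neck parameter chosen uniformly in $t\in K$ and with cut-off functions varying continuously in $t$. Once this parameterized deformation lemma is established, concatenating
\[\mathcal{R}^{\text{inv}}(M)\simeq \mathcal{R}^{\text{inv}}_{\text{std}}(M)\cong \mathcal{R}^{\text{inv}}_{\text{std}}(\widetilde M)\simeq \mathcal{R}^{\text{inv}}(\widetilde M)\]
over each surgery in the decomposition completes the proof.
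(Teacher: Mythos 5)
There is a genuine gap at the step where you claim a ``canonical homeomorphism'' $\mathcal{R}^{\text{inv}}_{\text{std}}(M)\cong \mathcal{R}^{\text{inv}}_{\text{std}}(\widetilde M)$ given by swapping the fixed model piece on a tubular neighbourhood of $S$ for the complementary model piece on a neighbourhood of $\widetilde S$. In the Chernysh--Walsh setting this swap is harmless because positive scalar curvature is a pointwise condition and both model pieces have psc, so the glued metric automatically lies in $\mathcal{R}^{\text{psc}}_{\text{std}}(\widetilde M)$. For invertibility of the Dirac operator this is exactly where the global nature of the condition bites: cutting out the standard handle metric on $D^{n-k}\times S^k$ and gluing in the dual standard piece can a priori create kernel, and there is no \emph{fixed} model (in particular no fixed neck length) for which the swap preserves invertibility uniformly over the whole subspace. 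The neck length needed to protect the spectral gap depends on the metric itself --- in the paper this is precisely the content of Lemma~\ref{lem:L}, where the gluing length $L(g,s)$ is a function on the space of metrics and cannot be chosen once and for all (compare also Remark~\ref{rem:R_metric} on the failure of spectral continuity). So your reduction to ``inclusion of the standard subspace is a weak equivalence'' does not yet yield the theorem: the middle identification in your chain $\mathcal{R}^{\text{inv}}(M)\simeq \mathcal{R}^{\text{inv}}_{\text{std}}(M)\cong \mathcal{R}^{\text{inv}}_{\text{std}}(\widetilde M)\simeq \mathcal{R}^{\text{inv}}(\widetilde M)$ is unjustified. The paper avoids this by never comparing two ``standard'' subspaces of closed manifolds directly; instead both $\mathcal{R}^{\text{inv}}(M)$ and $\mathcal{R}^{\text{inv}}(\widetilde M)$ are compared with the space $\Rc$ of invertible metrics with a cylindrical end (plus a neck parameter $s$) on the common open manifold $M\setminus S\cong \widetilde M\setminus \widetilde S$, where the identification $\Rc\cong\Rcrev$ is tautologically invertibility-preserving; all the analysis is then pushed into proving $\RM\simeq\Rc$ (flattening, the blow-up $\Upsilon_\rho$, and the metric-dependent grafting of torpedos).

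Two further points. First, the ``analytic heart'' you describe --- a parametrized ADH deformation with uniform spectral control --- is stated as the main obstacle but not carried out; in the paper this occupies Sections~\ref{pr:Riemf}--\ref{sec:cup} and is organized differently (continuous threshold functions $\hat\epsilon,\hat\rho$ produced by covering/partition-of-unity arguments rather than compact-family estimates), and moreover the retraction onto the image of $\Upsilon_\rho$ must be assembled from two homotopies ($\Xi_{\mathrm{gr}}$ far from the standard subspace and $\Xi_{\text{near}}$ close to it), since the grafting map does not preserve the standard subspace. Second, your bordism reduction by handle trading is fine for $n>3$, but for $n=3$ the cobordism is $4$-dimensional and the generic trading argument needs replacement; the paper handles this case separately via Kirby's theorem, reducing both $M$ and $\widetilde M$ to $S^3$ by codimension-$2$ surgeries. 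Your intended surgery range $1\leq k\leq n-2$ does match the paper's Proposition~\ref{prop:main2}.
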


This raises a lot of follow-up questions that are not addressed here but  which are worth to investigate further:  Does the homotopy equivalence depend on the chosen Morse function? Do there exist similar results for other Dirac-type operators?

\subsection*{Structure of the article} In Section 2 we give the necessary underlying analytical results on the Dirac operator. The actual proof of Theorem~\ref{thm:main} starts in Section~\ref{sec:3}. There we lay out all the steps to prove the corresponding result where only one surgery is involved--this is Proposition~\ref{prop:main2}. Section~\ref{sec:3} contains the complete strategy of the proof of Proposition~\ref{prop:main2}. However, we outsource longer proofs and constructions of auxiliary results in between to Sections~\ref{pr:Riemf} to~\ref{sec:cup}. The coarse structure of this proof is again summarized in Figure~\ref{fig:not} and a short table of the most important notations are given in Table~\ref{tab1}. In Subsection~\ref{sec:rel} we also give a relative version of Proposition~\ref{prop:main2}.

\subsection*{Acknowledgement} We would like to thank Bernd Ammann for a many helpful
discussions. Several steps of this proof were worked out by the second author in his PhD-thesis \cite{Ped}.

\section{Preliminaries}

We assume that  $(M,g)$ is spin and that the spin structure is chosen once and forever. We always use the compact-open-$C^\infty$-topology for metrics and functions on $M$. The space of all Riemannian metrics on $M$ will be denoted by $\Riem$.\medskip 

Let  $\Sigma^g M$ denote the associated complex spinor bundle.  We denote by  $\nabla^g$ and $D^g$ the lift of the Levi-Civita connection to the spinor bundle and the Dirac operator, respectively.

\subsection{On spin surgery}\label{sec:surg}

Let $S\cong S^k\times \{0\} \subset S^k\times D^{n-k}\hookrightarrow M$ be the surgery sphere of a spin surgery of codimension $n-k$, and let $\widetilde{M}$ be the smooth manifold after this surgery, see Figure~\ref{fig:surg}. Then, $\widetilde{S}\cong \{0\}\times S^{n-k-1}\subset D^{k+1}\times  S^{n-k-1}\subset \widetilde{M}$ is the surgery sphere of the reverse surgery which then has codimension $k+1$.\medskip 

We note that in the case $n-k=2$ the surgery sphere $\widetilde{S}$ has as induced spin structure the one (unique up to orientation) that bounds the disk (i.e., coming from $D^2$). Hence, the Dirac operator on $\widetilde{S}\cong S^{n-k-1}$ w.r.t. the standard metric  is always invertible.\medskip 

There is a difference to the positive scalar curvature case: If we equip $\mathbb R^{k+1}\times S^{n-k-1}$ with the standard product metric, then the scalar curvature is positive only for $n-k\geq 3$ but the Dirac operator is invertible for $n-k\geq 2$. This gives an intuition where the different codimensions in the surgery results for $\mathcal{R}^{\text{psc}}$ and $\mathcal{R}^{\text{inv}}$ come from.\medskip

Similarly, in order to be able to glue in $D^{k+1}\times  S^{n-k-1}$ during the surgery the spin structure on $S\cong S^k$ needs to be the one that bounds the disk.

\subsection{Identification of spinor bundles to different metrics}
The spinor bundle depends on the metric. In order to compare spinors to different metrics we use the identification of spinor bundles as in \cite[Sec. III]{BG}, see also \cite[Sec. 2.1]{ADH}: \medskip

The change of the spinor bundle when changing the metric on $M$ from $g$ to some  metric $h$ is given by a canoncical identification map: 
\begin{align}\label{eq:beta}
 \beta^g_h\colon \Sigma^g(M)\to \Sigma^h(M)
\end{align}
that is fiberwise an isometry with $\beta^g_h=(\beta^h_g)^{-1}$.\medskip

The Dirac operator $D^h$ can be expressed in terms of $D^g$ via these maps. The resulting Dirac operator on $(M,h)$ is related to $D^h$ \begin{equation}\label{Dg1g2AB}{}^hD^g= D^h+A^h_g\circ\nabla^h+B^h_g,\end{equation} 
where 
\begin{equation}\label{Dg1g2}^hD^g\define \beta_h^g\circ D^g\circ\beta^h_g\end{equation} and where $A^h_g\in\Gamma\left(TM\otimes \mathrm{End}(\Sigma^{h}M)\right)$ and $B^h_g\in\Gamma\left(\mathrm{End}(\Sigma^hM)\right)$. The latter sections  satisfy the following inequalities in terms of the $C^0$-norm of the section $g-h\in\Gamma(T^* M\otimes T^* M)$:
\begin{equation}\label{spinorisonorm}
 |A^h_g|\leq C|g-h|_h,\qquad |B^h_g|\leq C(|g-h|_h+|\nabla^h(g-h)|_h)
\end{equation}
for some $C>0$.\medskip 

In particular, for a conformal change of the metric $h=F^2g$, the Dirac operators $D^g$ and $D^{h}$ are related by 
\begin{equation}\label{conformalchange}
 F^\frac{n+1}{2}D^h=D^gF^{\frac{n-1}{2}},
\end{equation}
see \cite[Sec. 1.4]{Hi} (for conformal metrics we suppress the identification maps $\beta$ in the notations).

\subsection{Manifolds with cylindrical ends}

Let $(N,h)$ be a Riemannian manifold. We assume that there is a compact subset $K\subset N$ such that $N\setminus K$ is diffeomorphic to $Z\times [0,\infty)$ for some closed manifold $Z$. Note that $Z$ does not have to be connected. For a connected component $(Z_i, h_i\define h|_{Z_i\times \{0\}})$ of $(Z,h)$, let $h$ on $Z_i\times [0,\infty)$ have the form $h_i+du^2$. We then call $(Z_i\times [0,\infty), h)$ a \emph{cylindrical end}. If for all connected components $Z_i$ of $Z$ the manifold $(Z_i\times [0,\infty), h)$ is a cylindrical end, we call $(N,h)$ a \emph{manifold with cylindrical ends}.\medskip 

Such manifolds with cylindrical ends are in particular complete. Hence, the Dirac operator  
 $D^g$ for a manifold with cylindrical ends is essentially self-adjoint when considered as an unbounded operator from $L^2(\Sigma^gM)$ to itself.\medskip 
 
 Next we collect some spectral properties of manifolds with cylindrical ends:
 
\begin{lemma}\label{lem:spec_cyl}
Let $(N,h)$ be a Riemannian manifold with cylindrical ends $(Z\times [0,\infty), \hat{h}+du^2)$. Then, the following hold:
\begin{enumerate}[(i)]
 \item \cite[Sec. 4]{Mu} If the Dirac operator $D^{\hat{h}}$ on $Z$ has a spectral gap around zero, then the essential spectrum of the Dirac operator $D^h$ on $N$ has a gap around zero.
 \item \cite[Prop. 6.1 - Lem. 6.3]{DG} If the metric only changes on a compact subset of $(N,h)$, then the infimum of the spectrum of Dirac squared depends continuously on $g$.
\end{enumerate}
\end{lemma}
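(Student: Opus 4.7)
The plan is to treat the two parts separately. For (i) I would exploit the product structure of the cylindrical ends together with the fact that $\text{spec}_{\text{ess}}(D^h)$ is determined by the behavior of $D^h$ outside any fixed compact subset of $N$. Using cutoffs supported in the cylindrical region, one reduces to bounding the spectrum of the formal restriction of $D^h$ to $(Z\times [0,\infty),\hat h + du^2)$. On such an exact product, the square of the Dirac operator decouples as
\begin{equation*}
 (D^h)^2\big|_{Z\times [0,\infty)} = -\partial_u^2 + D_Z^2,
\end{equation*}
where $D_Z$ is a self-adjoint first-order operator on $Z$ whose spectrum coincides with that of $D^{\hat h}$. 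Fourier transform in $u$ combined with the spectral resolution of $D_Z$ then shows
\begin{equation*}
 \text{spec}\bigl((D^h)^2|_{Z\times[0,\infty)}\bigr) = \{\xi^2+\lambda^2 : \xi\in\mathbb R,\ \lambda\in\text{spec}(D^{\hat h})\},
\end{equation*}
so a spectral gap $(-\delta,\delta)$ of $D^{\hat h}$ gives $(D^h)^2|_{Z\times[0,\infty)}\geq\delta^2$. A standard Weyl-sequence argument then transfers this bound into the same gap for $\text{spec}_{\text{ess}}(D^h)$.

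For (ii) I would fix a reference metric $g_0$ and transport each $D^g$ back to the fixed Hilbert space $L^2(\Sigma^{g_0}M,\dvol_{g_0})$ via the identification $\beta^{g_0}_g$ from \eqref{eq:beta} and a density factor relating $\dvol_g$ to $\dvol_{g_0}$. This produces a family of self-adjoint operators $T_g$ with $T_{g_0}=D^{g_0}$; the formula \eqref{Dg1g2AB} together with the pointwise bounds \eqref{spinorisonorm} show that $T_g - T_{g_0}$ is a first-order differential operator whose coefficients are supported on the compact set $K$ on which $g$ differs from $g_0$, with sup-norms controlled by $|g-g_0|$ and $|\nabla^{g_0}(g-g_0)|$. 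Consequently $T_g - T_{g_0}$ is relatively $T_{g_0}$-compact, so by Weyl's theorem on the invariance of the essential spectrum the essential spectrum of $T_g^2$ agrees with that of $T_{g_0}^2$; the min--max principle then gives continuous dependence on $g$ of any discrete eigenvalues inside the gap provided by (i), and hence of $\inf \text{spec}\bigl((D^g)^2\bigr)$.

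The main technical obstacle is the uniform control of $\beta^{g_0}_g$ and of the density factor in sufficiently strong topologies to upgrade the pointwise estimates \eqref{spinorisonorm} into norm-resolvent continuity of $T_g$ at $g_0$; once this analytic setup is in place, both conclusions reduce to standard perturbation theory for semi-bounded self-adjoint operators.
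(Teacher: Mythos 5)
The paper never proves this lemma: both parts are quoted from the literature, (i) from \cite[Sec.~4]{Mu} and (ii) from \cite[Prop.~6.1--Lem.~6.3]{DG}, so your sketch has to stand on its own. For (i) your route is essentially the standard one behind the cited reference: separation of variables on the product end plus a decomposition-principle/Weyl-sequence argument. Two small repairs: the identity $\mathrm{spec}\bigl((D^h)^2|_{Z\times[0,\infty)}\bigr)=\{\xi^2+\lambda^2\}$ is not meaningful on the half-cylinder without fixing a boundary condition---run the Fourier computation on the full cylinder $Z\times\mathbb R$, or just prove the quadratic-form bound $\Vert D^h\phi\Vert^2_{L^2}\geq\delta^2\Vert\phi\Vert^2_{L^2}$ for spinors supported far out in the end, which is all the Weyl-sequence argument needs; and the induced cross-section operator need not literally be $D^{\hat h}$ (the usual even/odd-dimensional identifications intervene), but its square has the same spectrum as $(D^{\hat h})^2$, which suffices.

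Part (ii) has a genuine gap. First, the relative-compactness claim is false for the first-order term: $(T_{g_0}+i)^{-1}$ gains one derivative, which $A^{g_0}_g\circ\nabla^{g_0}$ spends again, so $A\nabla(T_{g_0}+i)^{-1}$ is merely bounded (only the zero-order term $B$, having compact support, is relatively compact by Rellich). Second, even granting invariance of the essential spectrum (which is true, via the decomposition principle), ``Weyl's theorem plus min--max'' does not yield the statement: stability of the essential spectrum says nothing about eigenvalues below it, and the variational principle with transported test spinors only gives upper semicontinuity of $g\mapsto\inf\mathrm{spec}\bigl((D^g)^2\bigr)$; the delicate direction---lower semicontinuity, i.e.\ that nearby metrics cannot have a substantially smaller bottom of the spectrum, which is precisely how the paper uses (ii) in the proof of Lemma~\ref{lem:L}---is the actual content of \cite{DG}. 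The mechanism that does work is the one you relegate to your closing sentence: the pointwise bounds \eqref{spinorisonorm} together with a G\aa rding/Schr\"odinger--Lichnerowicz estimate $\Vert\nabla^{g_0}\psi\Vert^2_{L^2}\leq\Vert D^{g_0}\psi\Vert^2_{L^2}+C\Vert\psi\Vert^2_{L^2}$ (available here since the scalar curvature of a manifold with cylindrical ends is bounded) show that $T_g-T_{g_0}$ has $T_{g_0}$-relative bound tending to $0$ as $g\to g_0$, whence norm-resolvent continuity and continuity of $\mathrm{dist}\bigl(0,\mathrm{spec}(T_g)\bigr)^2=\inf\mathrm{spec}\bigl((D^g)^2\bigr)$. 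Since you explicitly leave this step as ``the main technical obstacle,'' part (ii) is not proved as written; replacing the relative-compactness/Weyl step by this relative-bound estimate would both remove the error and close the gap.
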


\subsection{Regularity results}

Here we collect some regularity results for spinors we need in the following:

\begin{lemma}\label{lem:removalsingularityF} (Removal of singularities, \cite[Lem. 2.4]{ADH})
Let $(M, g)$ be a Riemannian manifold, let $S\subset M$ be a compact submanifold of dimension $k\leq n-2$.  Let  $\phi\in L^2(\Sigma^{g} M, g)$ fulfil $ D^{g}\phi =0$ weakly on $M\setminus S$.  Then,  $ D^{g}\phi =0$ holds weakly on $M$ and, hence, $\phi$ is smooth.
\end{lemma}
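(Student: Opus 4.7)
The plan is a cut-off argument reducing the claim on all of $M$ to the hypothesis on $M \setminus S$. I would choose a sequence of smooth cut-off functions $\chi_\varepsilon \colon M \to [0,1]$ that vanish on a tubular neighbourhood of $S$ of radius $\varepsilon$, equal $1$ outside a slightly larger tube, and satisfy
$$\lim_{\varepsilon \to 0} \|\operatorname{grad}_g \chi_\varepsilon\|_{L^2(M,g)} = 0.$$
Such cut-offs are built from the distance function $r(x) = d_g(x,S)$ in a tubular neighbourhood of $S$. For codimension $n-k \geq 3$ a piecewise-linear profile between $r = \varepsilon$ and $r = 2\varepsilon$ works, since $|d\chi_\varepsilon| \lesssim 1/\varepsilon$ on an annulus of $g$-volume $\sim \varepsilon^{n-k}$ and so $\|d\chi_\varepsilon\|_{L^2}^2 \lesssim \varepsilon^{n-k-2}$; in the remaining admissible codimension a logarithmic profile delivers the analogous decay. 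The excluded codimension $n-k=2$ is exactly the borderline where such $L^2$-small cut-offs are not available, which is what forces the hypothesis $k\neq n-2$.

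Given the cut-offs, for an arbitrary test spinor $\psi \in C^\infty_c(\Sigma^g M)$, the spinor $\chi_\varepsilon \psi$ is compactly supported in $M \setminus S$. Hence the hypothesis $D^g\phi = 0$ weakly on $M \setminus S$ yields, using the Leibniz rule $D^g(\chi_\varepsilon \psi) = \operatorname{grad}_g(\chi_\varepsilon)\cdot\psi + \chi_\varepsilon D^g\psi$,
$$0 = \int_M \langle \phi, D^g(\chi_\varepsilon \psi) \rangle \, \dvol_g = \int_M \chi_\varepsilon \langle \phi, D^g \psi \rangle \, \dvol_g + \int_M \langle \phi, \operatorname{grad}_g(\chi_\varepsilon) \cdot \psi \rangle \, \dvol_g.$$

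Taking $\varepsilon \to 0$, the first integral converges to $\int_M \langle \phi, D^g \psi\rangle \,\dvol_g$ by dominated convergence, since the integrand is bounded in absolute value by $|\phi|\,|D^g\psi| \in L^1$ and $\chi_\varepsilon \to 1$ almost everywhere; the second is controlled via Cauchy--Schwarz (Clifford multiplication being fibrewise isometric) by $\|\phi\|_{L^2} \,\|\psi\|_{L^\infty}\, \|\operatorname{grad}_g \chi_\varepsilon\|_{L^2}$, which tends to zero by construction. Therefore $\int_M \langle \phi, D^g \psi\rangle\,\dvol_g = 0$ for every test spinor, i.e.\ $D^g\phi = 0$ weakly on $M$, and interior elliptic regularity for the Dirac operator then upgrades $\phi$ to a smooth section. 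The only delicate step is the cut-off construction — everything else is a routine distributional computation combined with a single application of Cauchy--Schwarz.
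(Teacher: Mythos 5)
Your overall scheme (cut-offs with $L^2$-small differential, the Leibniz rule $D^g(\chi_\varepsilon\psi)=\operatorname{grad}_g\chi_\varepsilon\cdot\psi+\chi_\varepsilon D^g\psi$, dominated convergence plus Cauchy--Schwarz, then elliptic regularity) is exactly the standard argument; the paper itself gives no proof but cites \cite[Lem.~2.4]{ADH}, whose proof is this capacity/cut-off argument. However, your codimension bookkeeping contains a genuine error. A compact submanifold of codimension $2$ has \emph{vanishing} $H^1$-capacity: the logarithmic profile $\chi_\varepsilon(r)=\log(r/\varepsilon^2)/\log(1/\varepsilon)$ on $\varepsilon^2\le r\le\varepsilon$ gives $\|d\chi_\varepsilon\|_{L^2}^2\approx \mathrm{vol}(S)/\log(1/\varepsilon)\to 0$, so your claim that $n-k=2$ is ``exactly the borderline where such $L^2$-small cut-offs are not available'' is backwards. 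The true borderline failure is codimension $1$: for any cut-off vanishing on $\{r\le\varepsilon\}$ and equal to $1$ on $\{r\ge R\}$ one has, by Cauchy--Schwarz along normal segments, $\|d\chi\|_{L^2}^2\gtrsim \mathrm{vol}(S)/R$, which does not tend to zero. Hence the sentence ``in the remaining admissible codimension a logarithmic profile delivers the analogous decay'' cannot be repaired as stated: under the hypothesis written in the lemma the remaining admissible codimension is $n-k=1$, where no such cut-offs exist and where the conclusion is in fact false --- a spinor smooth up to a hypersurface $S$ from both sides with a jump satisfies $D^g\phi=\{D^g\phi\}+\nu\cdot(\phi_+-\phi_-)\,\delta_S$ distributionally, so weak harmonicity off $S$ plus $L^2$ does not propagate across $S$ (already the sign function on $S^1$ minus two points shows this). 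So, as written, your argument proves the statement only for codimension $\ge 3$, and your justification of the hypothesis $k\neq n-2$ is mathematically incorrect.

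The resolution is that the hypothesis in the lemma should be read as $k\le n-2$ (codimension at least $2$), as in \cite[Lem.~2.4]{ADH}; indeed the present paper later applies the lemma to surgery spheres of codimension exactly $2$ (Lemma~\ref{lem:existencedelta}, Proposition~\ref{prop:existencerho_old}), which the literal ``$k\neq n-2$'' would forbid, so the ``$\neq$'' is evidently a misprint for ``$\leq$''. With that reading your proof is correct and is the standard one: linear profile for codimension $\ge 3$, logarithmic profile precisely in the codimension-$2$ case (after smoothing the profiles so that $\chi_\varepsilon\psi$ is an admissible test spinor), and the rest of your distributional computation and the elliptic-regularity upgrade are fine.
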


\begin{lemma}\label{lem:Schauder}(Parametrized version of Schauder in \cite[Lem. 2.2]{ADH})
 For given $g\in \Riem$ and $K\subset M$ compact, there is a neighbourhood $U\subset \Riem$ of $g$ and a constant $C=C(K,M,g)$ such that 
 for all $h\in U$ and all harmonic spinors $\psi$ on $(M,h)$ 
 \[ \Vert \beta_g^h\psi\Vert_{C^2(K,g)}\leq C\Vert \beta^h_g\psi\Vert_{L^2(M,g)}.\]
\end{lemma}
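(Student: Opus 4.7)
The plan is to transfer the equation $D^h\psi = 0$ back to the fixed bundle $\Sigma^g M$ via the identification $\beta^h_g$ from \eqref{eq:beta}, where it becomes a small perturbation of the harmonic spinor equation for $D^g$, and then invoke the classical (unparametrised) Schauder estimate for $D^g$.

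Concretely, I would set $\tilde\psi := \beta^h_g\psi \in \Gamma(\Sigma^g M)$ and use \eqref{Dg1g2AB} together with \eqref{Dg1g2} to rewrite $D^h\psi = 0$ as an equation on $\Sigma^g M$ of the form
\[
 D^g\tilde\psi \;=\; R_h\,\tilde\psi,
\]
where $R_h$ is a first-order differential operator whose coefficients are built from $\beta^h_g A^h_g\beta^g_h$, $\beta^h_g B^h_g\beta^g_h$ and the difference $\beta^h_g\nabla^h\beta^g_h - \nabla^g$ (the latter a zeroth-order tensor comparing the two lifted Levi-Civita connections). By \eqref{spinorisonorm} and the standard pointwise comparison of Levi-Civita connections, the $C^0(K',g)$-norm of the coefficients of $R_h$ is bounded by a constant multiple of $\|g-h\|_{C^1(K',g)}$ on any compact $K'\subset M$. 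Shrinking $U$ we can make this as small as we like.

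The second step is a classical interior estimate for $D^g$ on $(M,g)$: choose compacts $K\Subset K_1\Subset K_2\Subset K_3\Subset M$ and iterate elliptic $L^2$-regularity for $D^g$ to obtain
\[
 \|\tilde\psi\|_{H^3(K,g)} \;\leq\; C\bigl(\|D^g\tilde\psi\|_{H^2(K_3,g)} + \|\tilde\psi\|_{L^2(K_3,g)}\bigr),
\]
and then invoke Sobolev embedding to control $\|\tilde\psi\|_{C^2(K,g)}$. The right-hand side term $\|D^g\tilde\psi\|_{H^2} = \|R_h\tilde\psi\|_{H^2}$ is, by the previous paragraph, bounded by $\varepsilon(U)\cdot\|\tilde\psi\|_{H^3(K_3,g)}$, so passing to a nested chain of compacts (and shrinking $U$ so that $\varepsilon(U)$ is small) the $R_h\tilde\psi$ contribution is absorbed into the left-hand side. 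This yields $\|\tilde\psi\|_{C^2(K,g)} \leq C\|\tilde\psi\|_{L^2(M,g)}$ with $C$ independent of $h\in U$.

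Finally, because $\beta^h_g$ is a fibrewise isometry one has $|\tilde\psi|_g = |\psi|_h$ pointwise, and for $h\in U$ the volume densities $\dvol_g$ and $\dvol_h$ are comparable by a uniform constant; the analogous comparability in the $C^2$-norm follows from the smooth dependence of the bundle identification on the metric. Re-expressing the estimate in the conventions of the statement gives the claim. The only real obstacle is bookkeeping: one must verify that the coefficients of $R_h$ and the identification $\beta^h_g$ depend continuously enough on $h\in U$ (in the compact-open $C^\infty$-topology) that all the constants in the Schauder iteration can be chosen uniformly in $h$. This is purely a matter of tracking the bounds \eqref{spinorisonorm} through the derivation of $R_h$ and causes no conceptual difficulty.
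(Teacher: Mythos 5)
Your argument is essentially the paper's own proof: the paper simply defers to \cite[Lem.~2.2]{ADH}, whose proof is exactly your route---transfer to the fixed bundle via $\beta^h_g$, write ${}^gD^h=D^g+A^g_h\circ\nabla^g+B^g_h$ as a perturbation of $D^g$ with coefficients controlled via \eqref{spinorisonorm} (and its higher-order analogues, available since the compact-open $C^\infty$-topology controls any finite number of derivatives), and apply elliptic interior estimates with constants uniform over the neighbourhood $U$. One small repair: absorption of $\Vert R_h\tilde\psi\Vert$ into the left-hand side does not literally close when the left-hand norm is over $K$ and the right-hand norm over the strictly larger $K_3$; but since $M$ is closed you may simply take all compacts equal to $M$ (or localize with cutoff functions, whose commutator with $D^g$ is zeroth order) and then the absorption argument goes through verbatim.
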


\begin{proof} The proof is as in \cite[Lem. 2.2]{ADH}.\end{proof}

\begin{lemma}\label{lem:Arzela}(Arz\'ela-Ascoli, \cite[Thm. 1.33]{Adams})
Let $K$ be a compact subset of a Riemannian manifold $(M,g)$, let $\phi_i$ a bounded sequence in $C^{1,\alpha}(\Sigma^gK, g)$ for some $\alpha >0$. Then, a subsequence of $\phi_i$ converges in $C^1(\Sigma^gK, g)$.
\end{lemma}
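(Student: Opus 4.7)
The plan is to reduce this to the classical Arzelà--Ascoli theorem for $\mathbb{C}^N$-valued functions on a compact subset of Euclidean space. Since $K$ is compact, I would first cover $K$ by finitely many coordinate charts $(U_j, x_j)$ of $M$ over which the spinor bundle $\Sigma^g M$ is trivialized by a local frame $(e_\alpha^{(j)})$, and choose a slightly smaller cover $(V_j)$ with $\overline{V_j}\subset U_j$ so that the transition functions between frames are smooth with bounded derivatives on the overlaps $\overline{V_j}\cap \overline{V_{j'}}$.

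In each chart, write $\phi_i|_{U_j} = \sum_\alpha f_{i,\alpha}^{(j)} e_\alpha^{(j)}$. The assumption that $\{\phi_i\}$ is bounded in $C^{1,\alpha}(\Sigma^g K,g)$, together with the smoothness of the frames and of the fiber metric, translates into uniform bounds on the coefficient functions $f_{i,\alpha}^{(j)}$ and their first partial derivatives in $x_j$, plus a uniform $\alpha$-Hölder bound on those first partial derivatives. In particular, both $\{f_{i,\alpha}^{(j)}\}$ and $\{\partial_\beta f_{i,\alpha}^{(j)}\}$ are bounded and equicontinuous families on the closed Euclidean set $\overline{V_j}$.

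Then I would apply the classical Arzelà--Ascoli theorem to the sequences $\{(f_{i,\alpha}^{(j)}, \partial_\beta f_{i,\alpha}^{(j)})\}_i$ on one chart at a time, extracting a subsequence for which both the coefficients and their first derivatives converge uniformly. Iterating a diagonal extraction over the finitely many charts $V_j$ (and over the finitely many indices $\alpha$, $\beta$) yields a single subsequence $\phi_{i_\ell}$ whose local coefficient functions converge in $C^1$ on every $V_j$. Because the trivializations glue via smooth transition functions with uniform bounds on $K$, this local $C^1$-convergence assembles into $C^1$-convergence of $\phi_{i_\ell}$ with respect to the metric $g$ on the compact set $K$.

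There is no serious obstacle here; the only point that requires any care is making the translation from the intrinsic norm $\|\cdot\|_{C^{1,\alpha}(\Sigma^g K,g)}$, defined via the Levi--Civita connection on $M$ and the lifted connection on $\Sigma^g M$, into uniform bounds on Euclidean partial derivatives of the coefficient functions in a fixed frame. This is a routine computation since the Christoffel symbols of $g$ and the connection coefficients of $\nabla^g$ in each fixed frame are smooth and hence bounded on the compact set $\overline{V_j}$.
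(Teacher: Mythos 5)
Your argument is correct. The paper does not actually prove this lemma; it simply cites it as the compact embedding $C^{1,\alpha}\hookrightarrow C^{1}$ from Adams--Fournier (Thm.~1.33), and your reduction---finitely many charts trivializing $\Sigma^g M$ over $K$, translation of the intrinsic $C^{1,\alpha}$-bound into uniform $C^{1,\alpha}$-bounds on the coefficient functions (equicontinuity of the first partials coming from the H\"older bound, of the coefficients themselves from the derivative bound), classical Arzel\`a--Ascoli plus a diagonal extraction, and gluing via the smooth transition functions---is exactly the standard proof of that embedding transferred to bundle sections, so it is fully consistent with what the paper relies on.
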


\section{Steps to prove Theorem~\ref{thm:main}}\label{sec:3}

In this section we lay out the steps to prove a surgery result that is the $\mathcal{R}^{\text{inv}}(M)$ counterpart to the positive scalar curvature case from \cite{Chernysh, Walsh}:

  \begin{proposition}\label{prop:main2}
 Let $M^n$ and $\widetilde M^n$ be two  closed spin manifolds of dimension $n\geq 3$ where $\widetilde{M}$ can be obtained from $M$ by a spin surgery of codimension $2\leq n-k\leq n-1$. Then $\mathcal{R}^{\text{inv}}(M)$ and $\mathcal{R}^{\text{inv}}(\widetilde M)$ are homotopy equivalent.
\end{proposition}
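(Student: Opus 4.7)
The plan is to establish the homotopy equivalence by a zig-zag through two auxiliary subspaces of metrics. Define $\Rf\subset\RM$ to consist of invertible metrics which, on a fixed tubular neighborhood of the surgery sphere $S$, take the standard ``half-flat'' product form $g_{S^k}+g_{\mathrm{flat}}$ on $S^k\times D^{n-k}$; and let $\Rc$ denote the space of invertible Riemannian metrics on the open manifold $M\setminus S$ with cylindrical ends of the shape $(S^{n-k-1}\times S^k)\times[0,\infty)$, obtained by radially stretching $D^{n-k}\setminus\{0\}$. I would then prove two homotopy equivalences, $\RM\simeq\Rf$ and $\Rf\simeq\Rc$, and the analogous pair for $\widetilde{M}$. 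The closing ingredient is the tautological identification $\Rc=\Rcrev$: after excising $S$ from $M$ and $\widetilde{S}$ from $\widetilde{M}$, one obtains canonically diffeomorphic open manifolds (both are the complement of the surgery cylinder in the trace), and cylindrical-end metrics carry no information about how the puncture is capped off.

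The hard step is $\RM\simeq\Rf$, which is the parametric form of the Ammann--Dahl--Humbert surgery theorem. Since the map in question is the inclusion, it suffices to show that any continuous family $F\colon K\to\RM$ over a compact parameter space can be homotoped, within $\RM$ and relative to the closed subspace where $F$ already lies in $\Rf$, to a family landing in $\Rf$. My approach is a two-stage parametric deformation supported in a tubular neighborhood of $S$: first straighten each metric so that its normal component becomes exactly flat on a smaller disk (using normal coordinates and a cutoff interpolation), and then insert a long cylindrical neck in that flat region. Continuity of $\inf\mathrm{spec}\,(D^g)^2$ under compactly supported variation (Lemma~\ref{lem:spec_cyl}(ii)), combined with the estimate \eqref{spinorisonorm} for the error terms $A^h_g,B^h_g$ and the parametric Schauder bound of Lemma~\ref{lem:Schauder}, lets one carry out the deformation slowly enough to keep $D^g$ invertible throughout.

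The equivalence $\Rf\simeq\Rc$ is essentially a pointwise conformal rescaling: multiplying the half-flat metric by a radial factor in the $D^{n-k}$-direction turns the punctured disk into a half-infinite cylinder. The conformal covariance formula \eqref{conformalchange} gives a bijection between $L^2$ harmonic spinors on the half-flat model and on the cylindrical model, while the removal-of-singularities Lemma~\ref{lem:removalsingularityF} identifies harmonic spinors on $M$ with those on $M\setminus S$ whenever $k\neq n-2$; in the borderline codimension-two case $n-k=2$, the invertibility of the Dirac operator on $S^{n-k-1}\cong S^1$ with the bounding spin structure (cf.\ Section~\ref{sec:surg}) together with Lemma~\ref{lem:spec_cyl}(i) provides the spectral gap that substitutes for the removal argument. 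The main obstacle I anticipate is carrying out the parametric surgery step in a genuinely continuous way: invertibility of $D^g$ is a global spectral condition and, unlike positive scalar curvature, is not protected by a pointwise inequality, so the deformation must be controlled in $C^\infty$ over all of $K$ simultaneously to prevent any eigenvalue from crossing zero during the neck insertion.
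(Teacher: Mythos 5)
Your overall skeleton coincides with the paper's: first deform $\RM$ onto a subspace $\Rf$ of metrics in standard form near $S$, then pass to cylindrical-end metrics on $M\setminus S$, and finally use that $\Rc$ and $\Rcrev$ are identified once the end is the standard product $\sigma_{n-k-1}+\sigma_k$. The genuine gap is in the step you dismiss as ``essentially a pointwise conformal rescaling''. The radial blow-up gives only a map $\Rf\to\Rc$, which the paper shows is a homeomorphism onto a \emph{closed, proper} subset of $\Rc$ (Proposition~\ref{prop:existencerho}); a general $(g,s)\in\Rc$ is standard only on its cylindrical end and need not be conformal to any smooth metric on the closed manifold $M$, so conformal covariance \eqref{conformalchange} together with removal of singularities produces neither a map back $\Rc\to\Rf$ nor a homotopy. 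The actual content of this step --- and the place where the surgery really happens --- is to show that the image $\Upsilon_\rho(\Rf)$ is a deformation retract of $\Rc$: every invertible cylindrical-end metric must be deformed, \emph{through} invertible cylindrical-end metrics, into blow-up form. The paper achieves this by capping off the end with a torpedo $\mathbb R^{n-k}\times S^k$ across a neck of length $L(g,s)$, where $L$ is chosen continuously via a parametrized gluing theorem (Lemma~\ref{lem:L}, a quantitative statement in terms of spectral gaps, not a soft conformal identity); invertibility of the model metric on $\mathbb R^{n-k}\times S^k$ for $n-k\geq 2$ enters exactly here. Moreover, the resulting grafting homotopy $\Xi_{\mathrm{gr}}$ (Proposition~\ref{prop:cup}) does \emph{not} preserve $\Upsilon_\rho(\Rf)$, so it must be patched with a separate retraction $\Xi_{\text{near}}$ defined only on a neighbourhood of the image, glued along a continuous first-hitting time (Proposition~\ref{prop:sigma}). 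None of this appears in your proposal, and without it the claimed equivalence $\Rf\simeq\Rc$ is unsupported; your removal-of-singularities/spectral-gap remarks only reprove that blow-ups of invertible metrics stay invertible, which is the easy direction.

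Two secondary points. First, the topology on the cylindrical side needs care: on the noncompact manifold $M\setminus S$ invertibility of $D^g$ is not an open condition in the compact-open $C^\infty$ topology, which is why the paper takes elements of $\Rc$ to be pairs $(g,s)$ recording where the end is exactly cylindrical (Remark~\ref{rem:R_metric}); your ``tautological'' identification $\Rc\cong\Rcrev$ is fine, but all the openness and extension arguments (needed, e.g., to extend $\Upsilon_\rho^{-1}$ and to build the neighbourhood retraction) rely on this bookkeeping, which your setup omits. Second, for $\RM\simeq\Rf$ you argue over compact parameter families, which a priori yields only a weak equivalence of the inclusion unless supplemented by an ANR-type argument; the paper instead constructs a global deformation retraction, with the flattening radius chosen as a continuous function $\hat\epsilon\colon\RM\to(0,1]$ via a covering and partition-of-unity argument, and with the subsequent standardizing diffeotopies arranged to preserve $\Rf$ throughout.
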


In this section we will explain the main points of the proof of this result. Longer proofs and constructions of auxiliary results in between are outsourced to Sections~\ref{pr:Riemf} to~\ref{sec:cup}. 
In Section~\ref{sec:mainthm} we will see how this result implies via standard bordism arguments our main theorem.\medskip 

Actually, we will also obtain a relative statement of Proposition~\ref{prop:main2},  since given any compact subset $A\subset M$ such that $M\setminus A$ contains the surgery sphere the constructions can be carried out such the metric does not change on $A$, see Proposition~\ref{prop:main3}.\medskip 

Very broadly speaking, in order to obtain Proposition~\ref{prop:main2} the first  idea is to mimic the proof of \cite[Thm. 1.2]{ADH}---the spinorial analogue of the Gromov-Lawson result---in a parametrized way: there the authors changed first a fixed metric $g\in \RM$ such that it has a standard form near the surgery sphere $S$: $(\text{flat metric}) + g|_S$. Then, using a conformal change that goes with $1/d_g(.,S)$ on an annulus of $S$ and also changing the metric in the $S^k$-direction near $S$, they obtain a 'blown-up metric'. That is an invertible metric on $M$ with a standard cylindrical end $([0,\infty)\times S^{n-k-1}\times S^k, du^2+\sigma_{n-k-1}+\sigma_k)$ and a 'torpedo', that represents the surgery, grafted on the end. Here, $\sigma_\ell$ denotes the standard metric on $S^\ell$, and $d_g$ is the distance function w.r.t. $g$.\medskip 
 
For doing the above in a parametric way we separate the ad-hoc topology changes  when glueing in the torpedo from the blowing-up procedure. Hence, we want to blow up in a parametric way just to the standard cylindrical end, i.e., in particular to a standardized metric on $M\setminus S$, see also Section~\ref{step1} below.\medskip  

  In order to carry out this idea in more details, let from now on $g_0$ be \emph{a fixed background metric} on $M$ specified further below. As stated before, we always use the compact-open-$C^\infty$-topology for metrics and functions both on $M$ and on $M\setminus S$. For the actual estimates we use the  $C^\infty$-norm of functions and metrics on $M\setminus S$ w.r.t. the background metric $g_0$.

\subsection{It is sufficient to prove \texorpdfstring{$\RM\cong \Rc$}{RM equivalent to Riemc} for \texorpdfstring{$n-k\geq 2$}{n-kgeq2}:}\label{step1}

We choose the {fixed background metric} $g_0$ on $M$ such that 
\[ \exp_{g_0}^\perp\colon (\overline{B_2(0)}\subset \mathbb R^{n-k})\times S\to M\]
is a diffeomorphism onto its image. Here, $\overline{B_2(0)}$ is the closed $2$-ball in $\mathbb R^{n-k}$ around the origin and  $\exp_{g_0}^\perp$ is the normal exponential map to $S$ in $(M,g_0)$. We set  $K\define \exp_{g_0}^\perp(\overline{B_2(0)}\times S)\subset M$ and $D_r\define \{ p\in M\ |\ d_{g_0}(p,S)\leq r\}$ for all $r\in (0,2]$, and we identify $\{r=0\}$ with $S$. In the following, 
\begin{center}\emph{$r$ will  always be the radial coordinate to $S$ w.r.t $g_0$}.\end{center}

We define $\Riemcu\subset \Ri \times (0,1) $ to consist of all $(g,s)\in \Ri\times (0,1]$ with the following properties:
\begin{enumerate}[(I)]\label{page:standard_form}
 \item $g$ has a \emph{cylindrical end w.r.t. $\ln r$} starting at $s$, i.e.,  $D_s\setminus S$ is isometric to $\frac{dr^2}{r^2} +\sigma_{n-k-1}+\sigma_k$.
 \item $g$ has \emph{standard form} on $D_1\setminus S$, i.e., there is a smooth function $z\colon (0,1]\to \mathbb R_{>0}$ and smooth families $g^i(r)$ of metrics on the sphere $S^i$  such that $g=z(r)^2 dr^2 + g^{n-k-1}(r)+ g^k(r)$ on $D_1\setminus S$. 
\end{enumerate}

Note that (I), the cylindrical end, is the important property for what follows. Property (II) is mainly for convenience and makes it easier to write down some maps in Section~\ref{step6_1}. The notation 'w.r.t. $\ln r$'  refers to the fact that putting $u=-\ln r$ the metric on $D_s\setminus S$ has the usual form $du^2+\sigma_{n-k-1}+\sigma_k$ for $u\in (-\ln s, \infty)$.\medskip

  We define\label{page:inv_cyl} $\Rc\subset \Riemcu$ to contain all $(g,s)\in \Riemcu$ for which the Dirac operator $D^g$ is invertible.\medskip 
 
  Further we equip $\Ri\times (0,1)$ with the distance function \[d_{\R}((g_1,s_1),(g_2,s_2))\define \Vert g_1-g_2\Vert_{C^\infty(M\setminus S, g_0)} + |s_1-s_2|.\]  This makes $\Ri\times (0,1]$, and hence its subspace $\Rc$, into a metric space.   
    \begin{remark}\label{rem:R_metric}\hfill
   \begin{enumerate}[(i)]
       \item In general, the spectrum does not depend continuously on the metric in the compact-open topology on $M\setminus S$, even not when considering only manifolds with cylindrical ends of the same link. The advantage of introducing the $s$ is that now invertibility of the Dirac operator is an open property on $\Riemcu$, cp. Lemma~\ref{lem:spec_cyl}. 
 \item  Note that $(g,s)\in \Rc$ implies $(g,s')\in \Rc$ for all $s'\in (0,s)$.
 \end{enumerate}
  \end{remark}
 
 Let $\widetilde{S}\subset \widetilde{M}$ be the surgery sphere of the reverse surgery. This will be a surgery of codimension $k+1$. Then, $M\setminus S=M\setminus (D^{n-k}\times S^k) \sqcup_{S^{n-k-1} \times S^k} (D^{n-k}\setminus\{0\} \times S^k)$ and $ \widetilde{M}\setminus \widetilde{S}=M\setminus (D^{n-k}\times S^k) \sqcup_{S^{n-k-1} \times S^k} (S^{n-k-1}\times D^{k+1}\setminus\{0\} )$ are diffeomorphic by fixing a diffeomorphism $D^{n-k}\setminus\{0\} \times S^k\cong S^{n-k+1}\times (0,1) \times S^k \cong S^{n-k-1}\times D^{k+1}\setminus \{0\}$. Let $\widetilde{K}\subset \widetilde{M}$ be such that $\widetilde{K}\setminus \widetilde{S}\cong K\setminus S$ under the above diffeomorphism. We use on $\widetilde{K}\setminus \widetilde{S}$ the same coordinates as on $K\setminus S$ given by $(0,2]\times S^{n-k-1}\times S^k$ as in the beginning of this section.   Hence, $\Rc$ and $\Rcrev$ are homeomorphic.\medskip
 
Assume we can prove that $\RM\cong \Rc$ for codimension $n-k\geq 2$: Then, this statement applied  to the reverse surgery (codimension $k+1\geq 2$) gives Proposition~\ref{prop:main2} by  $\mathcal{R}^{\text{inv}}(\widetilde{M})\cong \Rcrev\cong \Rc\cong \RM$. 
 \tablecaption{Table of notations}\label{tab1}~
 \begin{supertabular}{|c|l|c|}
 \hline 
 Notation& Explanation & Ref. \\
 \hline
 \hline
 $\Riem$ & Riem. metrics on $M$ &\\
 $\RM$ & Riem. metrics on $M$ with invertible Dirac operator
 &p.~\pageref{page:Rinv}\\
 $\text{Riem}_{\text{cyl}}(M\setminus S)$ & An element $(g,s)$ is given by a Riemannian metric $g$ & p.~\pageref{page:standard_form}\\
 &  on $M\setminus S$ that has cylindrical end w.r.t. $\ln r$ for $r\leq s$&\\
& and standardizes structure for $r\leq 1$.& \\ $\Rc$ & $(g,s)\in \text{Riem}_{\text{cyl}}(M\setminus S)$ s.t. $g$ has invertible Dirac op. & p.~\pageref{page:inv_cyl}\\
 $\Rf$ & Metrics in $\RM$ with half-product structure near $S$ & \eqref{eq_Rf}\\
 \hline
 \end{supertabular}

\subsection{'Half-Flattening' and standardizing of metrics in \texorpdfstring{$\RM$}{RM} near \texorpdfstring{$S$}{S}}\label{step2}

The aim of this step is to show that the space of Riemannian metrics with invertible Dirac operator  is homotopy equivalent to a subspace of metrics which have product form with $SO(n-k)$-symmetry on a neighbourhood around the embedded surgery sphere $S$.\medskip 

Consider the set
\begin{align}\label{eq_US} U_{S,g}(\epsilon)\define\left\{p\in M\ |\ d_g(p,S)\leq \epsilon\right\}.\end{align}
Moreover, let $\exp_g^\perp\colon V_g\subset \mathbb R^{n-k}\times S\to M$ be the normal exponential map to $S$ w.r.t. $g$ which is well-defined on an open subset $V_g$ around $0\in \mathbb R^{n-k}$. For that we fix once and for all a trivialization of the normal bundle of $S$ in $TM$. We have $U_{S,g}(\epsilon)= \exp_g^\perp \circ (\exp_{g_0}^\perp)^{-1}(D_\epsilon)$ for $\epsilon\in (0,2]$.\medskip

The above goal will be obtained by a homotopy equivalence of $\RM$ to a space of half-flattened metrics, defined below, glued together from three steps:
\begin{enumerate}[(A)]
 \item\label{A}  The metric will be perturbed into a \emph{half-flat standard form around $S$}, i.e., there will be a continuous function $\epsilon\colon \RM\to (0,1)$ such that $g$ is homotopic to a metric $\hat{g}$ which on $U_{S,g}(\epsilon(g))$ has the form $\hat{g}|_{U_{S,g}(\epsilon(g))}= (\exp_{{g}}^\perp)_* (\xi_{n-k}+ {g}|_S)$, where $\xi_{n-k}$ is the euclidean metric on $\mathbb R^{n-k}$.  This will be obtained by a parametrized version of \cite[Lemma 3.4]{ADH}, see Section~\ref{sec:half}.
 \item\label{B} Using appropriate diffeomorphisms, $U_{S,g}(r)$ will be mapped onto $D_r$ for all $r\leq \epsilon(g)$. On the tubular neighbourhoods $U_{S,g}(\epsilon(g))$, this will be done via normal exponential maps. This will then be extended to all of $M$ via a parametrized version of the diffeotopy extension theorem, see Lemma~\ref{lem:stand}. \smallskip 
 
   \begin{minipage}{0.6\textwidth} Using further diffeomorphisms that are radial w.r.t.~$g_0$, we will finally obtain  metrics that have $SO(n-k)$-symmetry on all of $D_1$. For that we choose a continuous family of smooth monotonically increasing functions $\{ a_{\epsilon}\colon [0,2]\to [0,2]\}_{\epsilon\in (0,1]}$ with $a_{\epsilon}(r)=r$ for $r\in (0, \frac{\epsilon}{4})$, $a_{\epsilon}(1)=\epsilon$,  $a_\epsilon|_{[3/2,2]}=\mathrm{id}$ and $a_1\equiv \mathrm{id}$. With these functions we can define the subspace we were heading to:\end{minipage}
 \hfill
 \begin{minipage}{0.3\textwidth}
 \centering
  \begin{tikzpicture}[scale=0.56]
   \draw[->] (-0.5,0)--(5,0) node[below] {\small $r$};
   \draw[->] (0,-0.5)--(0,5) node[left] {\small $a_\epsilon$};
\draw[dotted] (0,0) -- (4,4);
\draw (0,0)--(0.6,0.6);
\draw (3,3)--(4,4);
\draw (-0.1,1.5) node[left] {\small $\epsilon$} -- (0.1, 1.5);
\draw[fill] (2,1.5) circle(0.03cm);
\draw (0.6,-0.1) node[below] {\small $\frac{\epsilon}{4}$} -- (0.6, 0.1);
 \draw (2,-0.1) node[below] {\small $1$} -- (2, 0.1);
\draw (4,-0.1) node[below] {\small $2$} -- (4, 0.1);
\draw (3,-0.1) node[below] {\small $\frac{3}{2}$} -- (3, 0.1);
 \draw (-0.1,0.6) node[left] {\small $\frac{\epsilon}{4}$} -- (0.1,0.6);
 \draw (-0.1,2) node[left] {\small $1$} -- (0.1,2);
 \draw (-0.1,4) node[left] {\small $2$} -- (0.1,4);

 \draw (-0.1,3) node[left] {\small $\frac{3}{2}$} -- (0.1,3);
 \draw (0.6,0.6) .. controls (1.4,1.4) and (1.8,1.5) .. (2,1.5) .. controls (2.2,1.5) and (2.6,2.6) .. (3,3);
 \end{tikzpicture} 
 \end{minipage}                                                                                                                                                                                                                                                                                                                   
\begin{align} \Rf \define\{ g&\in \RM\ |\  \exists \delta\in (0,1)\colon \nonumber\\ &g|_{D_1\setminus S} = a_{\delta}'(r)^2dr^2 + a_{\delta}(r)^2\sigma_{n-k-1} + g|_S\}.\label{eq_Rf}\end{align}
In particular, the $\delta$ in the definition of $\Rf$ is uniquely determined by the chosen $g\in \Rf$ since $a_\delta(1)=\delta$. Hence, we obtain an induced  function
$\delta\colon \Rf\to (0,1)$, which maps $g$ to its correponding $\delta$. This function is continuous. Note that by construction $D_1=U_{S,g}(\delta(g))$ for all $g\in \Rf$. Moreover, $\Rf$ is a closed subspace of $\RM$ as can be seen as follows: let $g_i\in \Rf \to g\in \RM$ with $\delta_i\define \delta(g_i)$. There cannot be a subsequence of $\delta_i$ that converges to $0$ since otherwise $g$ would no longer be a metric on $M$. Hence, $\delta_i$ convergence to some $\delta\in (0,1]$ and $g|_{D_1\setminus S}= a_\delta'(r)^2dr^2+a_\delta(r)^2\sigma_{n-k-1}+g|_S$.
 \end{enumerate}

In total we obtain 

\begin{proposition}[Proved on p.~\pageref{pr:Rf}]\label{prop:Rf}
The  space $\Rf$ is homotopy equivalent to $\RM$. 
\end{proposition}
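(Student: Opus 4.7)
The plan is to realize $\Rf$ as a (strong) deformation retract of $\RM$ by concatenating the three constructions (A), (B), (C) from the excerpt into a single homotopy that is continuous in $g\in \RM$ and constant on $\Rf$. Since $\Rf$ is closed in $\RM$, producing a retraction $\rho\colon \RM\to \Rf$ together with a homotopy from $\mathrm{id}_{\RM}$ to $\iota\circ \rho$ relative to $\Rf$ is enough.

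The first task is a continuous choice $\epsilon\colon \RM\to (0,1)$ with three properties: (i) $\epsilon(g)$ is strictly less than the normal injectivity radius of $S$ with respect to $g$; (ii) the straight-line interpolation between $g$ and the half-flat model $(\exp_g^\perp)_*(\xi_{n-k}+g|_S)$, cut off smoothly outside $U_{S,g}(\epsilon(g))$, consists entirely of metrics with invertible Dirac operator; (iii) $\epsilon(g)=\delta(g)$ whenever $g\in \Rf$. Condition (i) is automatic from continuous dependence of $\exp_g^\perp$ on $g$. Condition (ii) uses that invertibility of $D^g$ on the closed manifold $M$ is an open condition in the $C^\infty(g_0)$-topology, by the spectral-stability mechanism familiar from the closed analogue of Lemma~\ref{lem:spec_cyl}(ii); a parametric Schauder-type estimate as in Lemma~\ref{lem:Schauder} provides the uniformity needed to make the selection continuous. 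Condition (iii) is then enforced by interpolating with $\delta(g)$ via a cutoff supported in a neighborhood of $\Rf$. With such $\epsilon$ in hand, step (A) is a linear homotopy in $\RM$ from $g$ to a metric $\hat g$ that is in half-flat standard form on the $g$-tube $U_{S,g}(\epsilon(g))$; this is the parametric analogue of \cite[Lem.~3.4]{ADH}.

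Step (B) is carried out by a continuous family of diffeomorphisms $\Phi_g\colon M\to M$, equal to the identity outside a neighborhood of $S$ and satisfying $\Phi_g(U_{S,g}(r))=D_r$ for all $r\leq \epsilon(g)$; on the tube one defines $\Phi_g$ by $\exp_{g_0}^\perp\circ (\exp_g^\perp)^{-1}$ (with an appropriate radial rescaling) and extends globally by a parametric diffeotopy extension theorem. The pullback $\Phi_g^*\hat g$ remains in $\RM$ because Dirac invertibility is preserved under pullback by a diffeomorphism, and it is now in half-flat form on the $g_0$-disk $D_{\epsilon(g)}$. Step (C) consists of a further radial diffeomorphism $\Psi_g$ supported in $D_1$, reparametrizing $r\mapsto a_{\epsilon(g)}(r)$ and equal to the identity near $r=0$ and outside $D_1$; conjugating by $\Psi_g$ stretches the flat annulus into the required $a_\delta$-profile with $\delta=\epsilon(g)$, landing in $\Rf$. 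Concatenating the three continuous families in $g$ yields the deformation retraction.

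The main obstacle I expect is the continuous selection of $\epsilon(g)$ satisfying property (ii): the interpolation must avoid harmonic spinors uniformly both along the interpolation parameter and for $g$ varying in a neighborhood of a fixed metric, a joint openness condition rather than a pointwise one. The parametric Schauder estimate of Lemma~\ref{lem:Schauder}, together with a compactness argument in the spirit of Lemma~\ref{lem:Arzela}, is the natural tool to rule out escaping spectrum; packaging this so that $\epsilon$ is globally continuous on $\RM$ (rather than only locally definable) is the technical heart of step (A). Once this has been arranged, steps (B) and (C) reduce to pullbacks by explicit families of diffeomorphisms and automatically preserve invertibility.
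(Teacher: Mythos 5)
Your outline---concatenating (A), (B), (C) into one homotopy after a continuous choice of tube radius $\epsilon$---is indeed the paper's strategy, but as written it has a genuine gap at the point the paper is most careful about: the verification that the homotopy maps $\Rf\times[0,1]$ into $\Rf$ (this, not a retraction alone, is what makes the time-one map a homotopy inverse of the inclusion). You replace this by the stronger claim that the homotopy can be made constant on $\Rf$, enforced by requiring $\epsilon(g)=\delta(g)$ on $\Rf$, and this fails for the construction you describe. For $g\in\Rf$ one has $U_{S,g}(r)=D_{a_{\delta(g)}^{-1}(r)}$, so any diffeomorphism with your normalization $\Phi_g(U_{S,g}(r))=D_r$ maps $D_1$ onto $D_{\delta(g)}$ and is not the identity unless $\delta(g)=1$; its pushforward of $g$ is half-flat on $D_{\delta(g)}$ but has no prescribed form on $D_1\setminus D_{\delta(g)}$, hence leaves $\Rf$, and only your step (C) brings it back at the very end. (Your step (C) also cannot be ``the identity outside $D_1$'': a radial map realizing $r\mapsto a_{\epsilon(g)}(r)$ on $D_1$ moves $\partial D_1$; the paper extends $a_\delta$ by the identity only outside $D_2$.) So the concatenated homotopy is neither rel $\Rf$ nor $\Rf$-preserving at intermediate times, and the homotopy-equivalence conclusion does not follow as stated. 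The paper instead keeps $\epsilon\leq\delta/8$ (so that step (A) does not touch metrics of $\Rf$, which are already in model form for $r\leq\delta/4$), builds the diffeotopy of Lemma~\ref{lem:stand} from the auxiliary metric $\Theta(g)$ with the $a_{\delta(g)}$-profile, normalized by $D_u=U_{S,\Theta(g)}(a_{\delta(g)}(u))$ so that $\Phi_{g,t}|_{D_1}=\mathrm{id}$ for $g\in\Rf$, and performs step (C) through the family $a_{(3-3t)\delta(g)+(3t-2)\epsilon(g)}$, which keeps $\Rf$ inside $\Rf$ for all $t$; see \eqref{map:T}. Your condition (iii) is moreover in tension with (i)--(ii): the interpolation annulus must sit inside the normal injectivity tube and below the invertibility threshold, and neither can simply be prescribed to equal $\delta$ on $\Rf$.

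The second weak point is your justification of (ii). The half-flattening is \emph{not} a $C^1$-small perturbation (the change of the metric is $O(\epsilon)$ in $C^0$ but of order one in $C^1$ on a shrinking tube, cf.\ \eqref{spinorisonorm}), so openness of invertibility/spectral stability does not apply; this is exactly why the paper runs a parametrized version of the Ammann--Dahl--Humbert contradiction argument in Lemma~\ref{lem:existencedelta}: normalized harmonic spinors of the deformed metrics are transplanted via the Bourguignon--Gauduchon identification, bounded in $H^1$ by Schr\"odinger--Lichnerowicz together with the $C^0/C^1$ bounds of Lemma~\ref{continuitydeformation}, and made to converge by Lemma~\ref{lem:Schauder} and Lemma~\ref{lem:Arzela} to a $D^g$-harmonic spinor on $M\setminus S$; the decisive step, which your sketch omits, is the removal-of-singularities Lemma~\ref{lem:removalsingularityF}, which upgrades this to a harmonic spinor on all of $(M,g)$ and contradicts $g\in\RM$. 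The globalization of the resulting local thresholds into a continuous $\hat{\epsilon}$ is then the partition-of-unity argument of Corollary~\ref{cor:deltaconstant}, much as you anticipated.
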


\begin{remark}\label{rem:Rflat}
After \eqref{A} we  have already obtained a 'half-flat' metric and shown that $\RM$ is homotopy equivalent to 
\[ \Riemf \define \{g\in \RM\ |\ \exists \epsilon'\in (0,1)\colon g|_{U_{S,g}(\epsilon')}=(\exp_g^\perp)_* (\xi_{n-k}+g|_S)\}.\]
The drawback is that now there is no continuous function $\delta\colon\! \Riemf\to (0,1]$ such that $g|_{U_{S,g}(\delta(g))}=(\exp_g^\perp)_* (\xi_{n-k}+g|_S)$ for all $g\in \Riemf$ since along a continuous path in $\Riemf$ the $\delta$ can jump. But such a function will be needed in the next section to obtain a continuous blow-up into $\Rc$ (the $\Upsilon_\rho$ in the section below). Hence, \eqref{B} is mainly useful to further perturb the outcome of \eqref{A} to a space where we have such a continuous $\delta$---this space is $\Rf$. We note that \eqref{B} needs to be carried out careful enough such that the metrics in $\Rf$ stay in $\Rf$ throughout the homotopy.
\end{remark}

\subsection{\texorpdfstring{$\Rf\hookrightarrow \Rc$}{RwarpinRcyl}}
 
 Up to now we have established that $\RM$ is homotopy equivalent to $\Rf$. In order to get the desired homotopy equivalence to $\Rc$, we want to identify $\Rf$ with a subspace of $\Rc$. For that we give a continuous version of the blow-up map in \cite{ADH}:
 
 \begin{proposition}[Proved in Section~\ref{pr:existencerho}]
 \label{prop:existencerho}Let $\delta\colon \Rf\to (0,1)$ be as in \eqref{B} from above. Then there is a continuous function $\hat{\rho}\colon \Rf \to (0,1)$ with $\hat{\rho}\leq \delta/32$ such that for all continuous functions $\rho\colon \Rf\to (0,1)$ with $\rho\leq \hat{\rho}$ the map 
  \begin{align}\label{eq:Y}
   \Upsilon_\rho\colon \Rf&\to \Ri \times (0,1),\\
   \nonumber g &\mapsto \left(y_{\rho(g)}(g), \rho(g) \right),
     \end{align}  
where $y_{\rho(g)}(g)$ is defined as 
\[\left\{ \begin{matrix} \hspace{5cm} g\hspace{5.7cm}  \text{on }M\setminus D_1\\[0.2cm] F^2\left( a_{\delta(g)}'(r)^2dr^2 + a_{\delta(g)}(r)^2\sigma_{n-k-1} +  f_{\rho(g)}^2\left(\eta_{\rho(g)}g|_S + (1-\eta_{\rho(g)})\sigma_k\right)\right) \text{on } D_1\setminus S
                           \end{matrix}\right. \]
and
$F$, $f_{\rho}$ and $\eta_{\rho}$ are defined in \eqref{eq:defFfeta} (see also the left of Figure~\ref{fig:fcts}), is 
\begin{enumerate}[(i)]
\item a homeomorphism onto its image and
\item $\Upsilon_\rho (\Rf)$ is a closed subset of $\Rc$.
\end{enumerate}
\end{proposition}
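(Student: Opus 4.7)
The plan is to split the proof into the construction of $\hat{\rho}$ so that the image of $\Upsilon_\rho$ lies inside $\Rc$, then verification of the two topological assertions (i) and (ii). The analytical core, and the main obstacle, is the first step: it is the parametrized version of the Ammann--Dahl--Humbert blow-up, which must now depend continuously on $g$. I would construct $\hat{\rho}$ by combining a geometric and a spectral constraint. Geometrically, imposing $\hat{\rho} \leq \delta/32$ together with the explicit definitions in \eqref{eq:defFfeta} ensures that $y_{\rho(g)}(g)$ has standard form on $D_1\setminus S$ and a $\ln r$-cylindrical end on $D_{\rho(g)}\setminus S$, so that $(y_{\rho(g)}(g),\rho(g)) \in \Riemcu$. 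Spectrally, the standard product Dirac operator on $\mathbb{R}\times S^{n-k-1}\times S^k$ has a gap around $0$ (since $n-k \geq 2$ and the induced spin structure on $S^{n-k-1}$ is the one bounding the disk, cf.\ Section~\ref{sec:surg}), so Lemma~\ref{lem:spec_cyl}(i) leaves $D^{y_{\rho(g)}(g)}$ only with possibly discrete spectrum near $0$. The absence of this discrete spectrum is obtained from $g \in \RM$: by the conformal change formula \eqref{conformalchange}, any $L^2$-harmonic spinor on the blown-up manifold would lift to an $L^2$-harmonic spinor on $(M\setminus S, g)$, which by Lemma~\ref{lem:removalsingularityF} (applicable since $\dim S = k \neq n-2$ for codimension $n-k\geq 2$) extends to a harmonic spinor on $(M,g)$, contradicting $g \in \RM$. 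Lower semi-continuity in $g$ of the set of admissible $\rho$ then follows from Lemma~\ref{lem:spec_cyl}(ii) together with the parametric Schauder estimate in Lemma~\ref{lem:Schauder}, and I would take $\hat{\rho}$ to be any continuous minorant of this set bounded above by $\delta/32$.

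For (i), I would decode the image explicitly. The second coordinate gives $\rho(g)$ and hence determines the auxiliary data $F, f_{\rho(g)}, \eta_{\rho(g)}$; the first coordinate agrees with $g$ on $M\setminus D_1$; on $D_1\setminus S$, after dividing out the now-known conformal factor $F^2$, the block $a_{\delta(g)}(r)^2 \sigma_{n-k-1}$ recovers $\delta(g)$, and the region where $\eta_{\rho(g)} = 1$ (near $r = 1$) recovers $g|_S$. The half-flat form defining $\Rf$ then reconstructs $g$ smoothly across $S$. Continuity of $\Upsilon_\rho$ is immediate from continuity of $g \mapsto (\rho(g), \delta(g), g|_S)$ and the smooth dependence of the auxiliary functions on $\rho$; the same explicit formulas provide continuity of the inverse on the image.

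For (ii), let $(h_i, s_i) := \Upsilon_\rho(g_i) \to (h, s) \in \Rc$. Then $s_i \to s > 0$ and $h_i \to h$ in the compact-open $C^\infty(g_0)$-topology on $M\setminus S$; in particular $g_i|_{M\setminus D_1} = h_i|_{M\setminus D_1}$ converges, and the half-flat data $\delta(g_i), g_i|_S$ converge to some $\delta \in (0,1]$ (necessarily positive, for otherwise $h$ would degenerate) and some metric on $S$. These assemble into a candidate $g$ with the half-flat form on $D_1$ such that $\Upsilon_\rho(g) = (h, s)$ by continuity of the construction. Finally, invertibility of $D^g$ follows from invertibility of $D^h$ via the same conformal correspondence and removal-of-singularities argument used in the construction of $\hat{\rho}$, so $g \in \Rf$ and $(h, s) \in \Upsilon_\rho(\Rf)$, proving closedness.
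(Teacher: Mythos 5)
Your overall architecture (geometric constraint $\hat\rho\le\delta/32$, spectral gap of the cylindrical end via Lemma~\ref{lem:spec_cyl}(i), reconstruction of $g$ from the image for (i), limit argument for (ii)) matches the paper, and your treatment of (i) is essentially the paper's: one reads off $\rho(g)$, $\delta(g)$ and $g|_S$ from the image and writes down an explicit continuous left inverse. However, the analytic core of your construction of $\hat\rho$ has a genuine gap. The blow-up $y_{\rho}(g)$ is conformal to $g$ \emph{only} on $M\setminus D_{2\rho}$, where $f_{\rho}\equiv 1$ and $\eta_{\rho}\equiv 1$; on $D_{2\rho}\setminus S$ the $S^k$-factor is rescaled by $f_\rho^2$ and interpolated with $\sigma_k$, so \eqref{conformalchange} does not apply there. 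Hence an $L^2$-harmonic spinor $\psi$ of $y_\rho(g)$ only yields a $D^g$-harmonic spinor $F^{\frac{n-1}{2}}\psi$ on the set $M\setminus D_{2\rho}$, which stays at a fixed positive distance from $S$; Lemma~\ref{lem:removalsingularityF} requires weak harmonicity on all of $M\setminus S$ and therefore cannot be invoked. So your claimed direct contradiction for a fixed $\rho$ fails -- tellingly, your argument would prove invertibility of $D^{y_\rho(g)}$ for \emph{every} $\rho\le\delta/32$ with no smallness depending on $g$, which is exactly what one cannot get this way.

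The paper instead argues by contradiction along sequences $\rho_i\to 0$ and $g_i\to g$ in $\Rf$ (Proposition~\ref{prop:existencerho_old}), so that the conformally transformed spinors $\varphi_i=\beta_g^{g_i}(F^{\frac{n-1}{2}}\psi_i)$ are $D^{g_i}$-harmonic on $M\setminus D_{2\rho_i}$ with $2\rho_i\to 0$, and a limit can be harmonic on all of $M\setminus S$. The key quantitative ingredient missing from your proposal is the weighted $L^2$-decay estimate \eqref{Rayleighquotient}--\eqref{L2bounded}: a Rayleigh-quotient argument on $D_{2u}$, using the spectral gap $\ge (n-k-1)^2/4$ of the cross-section and a cut-off with $|d\chi|$ controlled in the blown-up metric, shows that harmonic spinors of $y_\rho(\hat g)$ cannot concentrate near the cylindrical end. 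Only with this estimate can one normalize $\int_{M\setminus D_u}|\varphi_i|^2\,\dvol_g=1$, keep the sequence bounded in $L^2$ away from $S$, pass to a $C^1_{\mathrm{loc}}$-limit via Lemma~\ref{lem:Schauder} and Lemma~\ref{lem:Arzela}, and conclude that the limit spinor is \emph{nonzero} (its norm lies in $[1,65]$), after which Lemma~\ref{lem:removalsingularityF} contradicts $g\in\RM$; a covering/partition-of-unity argument then yields the continuous $\hat\rho$. The same flaw reappears at the end of your proof of (ii), where you again invoke the conformal correspondence plus removal of singularities to transfer invertibility from the limit cylindrical metric to the reconstructed closed-manifold metric; since the two metrics are not conformal near $S$, this step needs a different justification than the one you give.
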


Note that $y_{\rho(g)}(g)$ has a cylindrical end on $D_{\min\{\rho, \delta(g)/4\}}$ and, thus, it is complete. The $\rho$ will be chosen later.
\begin{figure}
\centering
\begin{tikzpicture}[scale=0.73]

\draw[->] (-0.5,0)--(5,0) node[below] {\small $r$};
\draw[->] (0,-0.5)--(0,5);

\draw (4.5,0.15)--(4.5,-0.15) node[below] {\small $1$};
\draw (3.5,0.15)--(3.5,-0.15) node[below] {\small $\frac{3}{4}$};
\draw (2.5,0.15)--(2.5,-0.15) node[below] {\small $\frac{1}{2}$};
\draw (2,0.15)--(2,-0.15) node[below] {\small $2\rho$};
\draw (1,0.15)--(1,-0.15) node[below] {\small $\rho$};

\draw (0.15,4.5)--(-0.15,4.5) node[left] {\small $2$};
\draw (0.15,3.5)--(-0.15,3.5) node[left] {\small $1$};
\draw (0.15,1)--(-0.15,1) node[left] {\small $\rho$};

\begin{scope}
\clip (0,-1) --(1,-1)--(1,2)--(0,2)--(0,-1);
\draw[color=green, line width=1] (0,0) node[xshift=17, yshift=25] {\small $r$}  -- (1,1) .. controls (1.3,1.2)  and (1.6,3.5) .. (2,3.5)--(5,3.5);
\draw[color=red, line width=1] (0,0) -- (1,0) .. controls (1.6,0.1)  and (1.4,3.45) .. (2,3.47)--(5,3.47);
\end{scope}

\begin{scope}
\clip (1,-1) --(2,-1)--(2,3.7)--(1,3.7)--(1,-1);
\draw[color=green, line width=1, dashed] (0,0) -- (1,1) .. controls (1.3,1.2)  and (1.6,3.5) .. (2,3.5)--(5,3.5);
\draw[color=red, line width=1, dashed] (0,0) -- (1,0) .. controls (1.6,0.1)  and (1.4,3.45) .. (2,3.47)--(5,3.47);
\end{scope}

\begin{scope}
\clip (2,-1) --(5,-1)--(5,3.54)--(2,3.54)--(2,-1);
\draw[color=green, line width=1] (0,0) -- (1,1) .. controls (1.3,1.2)  and (1.6,3.5) .. (2,3.5)--(5,3.5);
\draw[color=red, line width=1] (0,0) -- (1,0) .. controls (1.6,0.1)  and (1.4,3.45) .. (2,3.47)--(5,3.47);
\draw[color=blue, line width=1] (0.5,5) .. controls (1, 4.6) .. (2.5,4.5) .. controls (2.9,4.4)  and (3.1,3.52) .. (3.5,3.53)--(5,3.53);
\end{scope}

\begin{scope}
\clip (0,6) --(2.5,5.5)--(2.5,4.4)--(0,4.4)--(0,4.4);
\draw[color=blue, line width=1] (0.5,5) node[xshift=30, yshift=-3] {\small $\frac{1}{r}$} .. controls (1, 4.6) .. (2.5,4.5) .. controls (2.9,4.4)  and (3.1,3.52) .. (3.5,3.53)--(5,3.53);
\end{scope}

\begin{scope}
\clip (2.4,4.5)--(3.5,4.4)--(3.5,3.5)--(2.5,3.5) --(2.5,4.4);
\draw[color=blue, line width=1, dashed] (0.5,5) .. controls (1, 4.6) .. (2.5,4.5) .. controls (2.9,4.4)  and (3.1,3.52) .. (3.5,3.53)--(5,3.53);
\end{scope}

\begin{scope}[shift={(8,3)}, scale=0.5] 

\draw (-1.5,-0.5) node {\small $(M,g)$};
\draw[line width=1] (0,0) .. controls (-2,1) and (-2,2) .. (0,3) .. controls (2,4) and (3,4) .. (4,3.7) .. controls (5, 3.6) and (5.5, 4) .. (6, 4) .. controls (12, 4.5) and (12, -1.5) .. (6,-1) .. controls (5.5,-1) and (5,-0.6) .. (4, -0.7) .. controls (3,-1) and (2,-1) .. (0,0);
\draw[line width=1] (0,2) .. controls (1,1.5) .. (2,2);
\draw[line width=1] (0.4,1.8) .. controls (1,2) .. (1.6,1.8);
\begin{scope}[shift={(2,-1)}]
\draw[line width=1] (0,2) .. controls (1,1.5) .. (2,2);
\draw[line width=1] (0.4,1.8) .. controls (1,2) .. (1.6,1.8);
\end{scope}

\draw[dashed] (7,5.2) node[above] {\tiny $r=1$ } -- (7,-7.5);
\draw[dashed] (8.6,4.5) node[above, yshift=-3] {\tiny  $r=\frac{1}{2}$ } -- (8.6,-7.5);
\draw[dashed] (10,5.2) node[above] {\tiny  $r=\rho$ } -- (10,0) ..controls (10,-2) and (11,-2) .. (11,-3.7)  --(11,-7.5);
\draw[dashed] (10.5,4.5) node[above, xshift=6] {\tiny  $r=0$ } -- (10.5,0) .. controls (10.5,-2) and (13,-3.5) .. (14,-3.5);
\draw[fill] (10.5,1.5) circle (0.05cm);
\draw[->] (11.5,1.5) node[right] {\small  $S$}-- (10.7,1.5);

\begin{scope} [shift={(0,-6)}]
\draw (-1.5,-1) node {\small $(M\setminus S, \Upsilon_\rho(g))$};

\draw[line width=1] (0,0) .. controls (-2,1) and (-2,2) .. (0,3) .. controls (2,4) and (3,4) .. (4,3.7) .. controls (5, 3.6) and (5.5, 4) .. (6, 4)  ..     controls (9.2,4.3) and (9.4,3.1) .. (9.5,3) .. controls (10.5, 1.5) and (10.8, 2) .. (11,2) -- (14,2); 
\draw[line width=1] (14,1) -- (11,1) .. controls (10.8,1) and (10.5, 1.5) .. (9.5,0) .. controls (9.4, 0) and (9.2, -1.3) .. (6,-1) .. controls (5.5,-1) and (5,-0.6) .. (4, -0.7) .. controls (3,-1) and (2,-1) .. (0,0);
\draw[line width=1] (0,2) .. controls (1,1.5) .. (2,2);
\draw[line width=1] (0.4,1.8) .. controls (1,2) .. (1.6,1.8);
\begin{scope}[shift={(2,-1)}]
\draw[line width=1] (0,2) .. controls (1,1.5) .. (2,2);
\draw[line width=1] (0.4,1.8) .. controls (1,2) .. (1.6,1.8);
\end{scope}
\end{scope}

\end{scope}
\end{tikzpicture}

\caption{Left: $F(r)$ (blue), $f_{\rho}(r)$ (green) and $\eta_\rho(r)$ (red).\newline  Right: How $\Upsilon_\rho$ changes the metric. There are no changes for $r\geq 3/4$. For $r\leq \rho(g)$ the resulting metric has the cylindrical end $r^{-2}dr^2+\sigma_{n-k-1}+\sigma_k$. The radial scale on the upper and lower picture is different; a comparison is given by the dashed lines.}
\label{fig:fcts}
\end{figure}

 \subsection{\texorpdfstring{$\Upsilon_\rho(\Rf)\cong \Rc$}{RwarpRcyl} for \texorpdfstring{$\rho\colon \Rf\to (0,1)$}{rho} small enough.}
 
 In this step we will see that for $\rho$ small enough $\Upsilon_\rho(\Rf)$ and $\Rc$ are homotopy equivalent which together with the previous steps will finish the proof.

\subsubsection{Torpedos}\label{sec:torp}

 We choose a Riemannian metric $h_{m}$ on $\mathbb R^{m}$, $m\geq 2$, such that in standard spherical coordinates  $h_m =d\tilde{r}^2+w(\tilde{r})^2\sigma_{m-1}$ with radial coordinate $\tilde{r}$, $w(\tilde{r})=\tilde{r}$ for $\tilde{r}\leq 1$  and $w(\tilde{r})=1$ for $\tilde{r}\geq 2$.  We choose $h_m$ such that $D^{h_m}$ is invertible, see Appendix~\ref{app}.\medskip
 
 The \emph{$(n,k)$-torpedo}  $(T_{n,k}, h_{torp}^{n,k})$ is then defined to be the product manifold $(T_{n,k}=\mathbb R^{n-k} \times S^k, h_{torp}^{n,k}=h_{n-k}+\sigma_k)$.\medskip

 \subsubsection{Grafting of torpedos on metrics in $\Rc$}
 
Firstly, we will see that for any $(g,s)\in \Rc$ and a gluing cylinder of large enough length $L$ the glued manifold
\begin{align*}
 M\setminus \{r<s\} \cup_{\{r=s\}\subset M} [0,L] \times S^{n-k-1}\times S^k \cup_{\{\tilde{r}=2\}\subset T_{n,k}} T_{n,k}\setminus \{\tilde{r}\geq 2\},
\end{align*}
with metric induced from $g$ and $h_{\text{torp}}^{n,k}$, is an element in $\RM$, cp. Figure~\ref{fig:glue_cyl}. In particular, after using a radial diffeomorphism the resulting metric $\hat{g}$ should be an element in $\Rf$ (with $\delta (\hat{g})=1$). Then, $\Upsilon_\rho(\hat{g})$ shall be the metric to which we want to deform $(g,s)$. We need to choose $L$ depending continuously on $(g,s)$ and such that an appropriate interpolation of $(g,s)$ and $\Upsilon_\rho(\hat{g})$ will be in $\Rc$ as well.\medskip   

For that we need a parametrized version of the gluing result for cylindrical manifolds from \cite[Prop. 2.1]{Dahl.2008}: 

\begin{definition}\label{def:cyl} Let $N$ be a manifold with (at least) one end  $Z_N\subset N$ such that $Z_N$ is diffeomorphic to $[2,\infty)\times S^{n-k-1}\times S^k$.  For $c>0$, let $\text{R}_{c}(N)$ be  the set of complete Riemannian metrics on $N$   such that under the above diffeomorphism $h|_{Z_N}=d\hat{r}^2+\sigma_{n-k-1}+\sigma_k$ and such that $\text{inf}\, \text{spec} (D^h)^2\geq c^2$.
\end{definition}

Note that we do neither assume that $N\setminus Z_N$ is compact nor that all ends are cylindrical. Moreover, we can view $(T_{n,k}, h_{torp}^{n,k})$ as an element of $R_{k^2/4}(T_{n,k})$ and $\hat{r}=\tilde{r}$.

\begin{lemma}[Proved on p.~\pageref{pr:L}]\label{lem:L}  Let $N$ be a manifold as in Definition~\ref{def:cyl}. Let $c\colon \Rc\to (0,\infty)$ be continuous. Then there is a continuous map $L\colon \Rc \to (0,\infty)$ such that for all $(g,s)\in \Rc$, $h\in \text{R}_{c(g,s)}(N)$ and $L\geq L(g,s)$ the manifold
\[ Z\define M\setminus \{r<s\} \cup_{\{r=s\}\subset M} [0,L] \times S^{n-k-1}\times S^k \cup_{\{\hat{r}=2\}\subset N} N\setminus \{\hat{r}\geq 2\}\]
with the glued metric called $Z((g,s),h, L)$, see Figure~\ref{fig:glue_cyl}, has invertible Dirac operator. Moreover, $L$ can be chosen such that it depends continuously on $c$.
\end{lemma}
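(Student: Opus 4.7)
The plan is to parametrize Dahl's gluing theorem \cite[Prop.~2.1]{Dahl.2008}: if two complete manifolds with matching cylindrical ends both have invertible Dirac operators, then joining them along a sufficiently long cylinder produces an invertible Dirac operator. The novelty is that the gluing length must depend continuously on $(g,s)\in\Rc$, and must work uniformly in the second parameter $h\in\text{R}_{c(g,s)}(N)$.

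First I would recall the cylinder decay estimate. With $Y\define S^{n-k-1}\times S^k$ and the product metric $\sigma_{n-k-1}+\sigma_k$, equipped with the spin structure induced from the disk factors as in Section~\ref{sec:surg}, $D^Y$ is invertible with some explicit spectral gap $2c_0>0$ depending only on $n$ and $k$. On the product cylinder $\mathbb R\times Y$, separation of variables then shows that every $L^2$-solution of $D\psi=\mu\psi$ with $|\mu|\leq c_0/2$ decomposes into modes $e^{\pm\sqrt{\lambda^2-\mu^2}\,u}\otimes\phi_\lambda$ that decay exponentially with rate at least $c_0\sqrt 3/2$.

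I would then prove pointwise existence of $L$ by contradiction. Fix $(g,s)\in\Rc$ and set $\mu_0\define\tfrac12\min\bigl\{\sqrt{\text{inf}\,\text{spec}\,(D^g)^2},\, c(g,s),\, c_0\bigr\}>0$. Assume no valid $L$ exists: then there are $L_i\to\infty$, metrics $h_i\in\text{R}_{c(g,s)}(N)$, and unit eigenspinors $\psi_i$ of the Dirac operator on the glued manifold $Z_i\define Z((g,s),h_i)$ with eigenvalues $|\mu_i|<\mu_0$. The cylinder estimate forces the $L^2$-mass of $\psi_i$ on the middle cylinder to be exponentially small compared to its mass near the two boundary tori, so after extracting a subsequence at least one of the two side-masses is bounded below by $\tfrac14$. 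In the $N$-side case, the Fourier expansion produces an explicit extension of $\psi_i$ to an approximate $\mu_i$-eigenspinor on the full $(N,h_i)$ with error vanishing as $L_i\to\infty$, directly violating $\text{inf}\,\text{spec}\,(D^{h_i})^2\geq c(g,s)^2$. In the $M$-side case, interior Schauder estimates (Lemma~\ref{lem:Schauder}) together with Arz\'ela-Ascoli (Lemma~\ref{lem:Arzela}) yield subsequential convergence on compacta of $M\setminus\{r<s\}$; re-extending the cylinder collar to infinity via the same Fourier expansion exhibits the limit as a nonzero $L^2$-harmonic spinor on $(M\setminus S, g)$, contradicting $(g,s)\in\Rc$.

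Finally, continuity of $L$ follows by patching. The argument above is quantitative, with $L_0$ depending only on $\text{inf}\,\text{spec}\,(D^g)^2$, $c(g,s)$ and the fixed $c_0$. Two nearby points in $\Rc$ agree identically on a common tubular piece $\{r<\min(s_1,s_2)\}$ and are $C^\infty$-close on the compact complement, so Lemma~\ref{lem:spec_cyl}(ii) applies and $\text{inf}\,\text{spec}\,(D^g)^2$ depends continuously on $(g,s)$; since $c$ is continuous by assumption, a single $L_0$ works uniformly on an open neighbourhood of each $(g,s)$. Combining such local constants via a subordinate partition of unity on the paracompact metric space $\Rc$ yields a continuous $L(g,s)$, since any value larger than the pointwise threshold remains valid. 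The main obstacle I expect is the extraction step on the $M$-side, where one must certify that the limit spinor is genuinely $L^2$ after restoring the infinite cylindrical end; this requires tracking the Fourier modes contributing at the glued boundary and using their uniform decay rate to push integrability through the limit.
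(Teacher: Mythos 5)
Your overall skeleton---a pointwise statement whose conclusion is monotone in the gluing length, followed by a partition-of-unity patching on the paracompact metric space $\Rc$---is the same as the paper's, but your pointwise step is carried out differently and, as written, leaves two gaps. First, from non-invertibility you immediately take unit eigenspinors $\psi_i$ with $|\mu_i|<\mu_0$; this needs the observation (one sentence in the paper) that the essential spectrum of the glued operator is governed by the ends, which are ends of $(N,h_i)$ with $h_i\in \text{R}_{c(g,s)}(N)$, so that the spectrum of the glued Dirac operator in $(-c(g,s),c(g,s))$ is discrete and a spectral point at $0$ is indeed an eigenvalue. Second, and more substantively, your continuity step rests on the assertion that ``the argument above is quantitative, with $L_0$ depending only on $\text{inf}\,\text{spec}\,(D^g)^2$, $c(g,s)$ and $c_0$.'' But the argument you actually sketch is a compactness/contradiction argument (extraction of limit spinors via Lemma~\ref{lem:Schauder} and Lemma~\ref{lem:Arzela}, transplantation of Fourier modes onto $(N,h_i)$), and such an argument produces no explicit threshold; so the locally uniform $L_0$ that your partition-of-unity patching requires is not established. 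The repair is either to make the neck estimate genuinely quantitative, or---as the paper does---to build the neighbourhood into the contradiction: assume $L_i\to\infty$, $(g_i,s_i)\to(g,s)$ \emph{and} $h_i\in\text{R}_{c(g_i,s_i)}(N)$ with non-invertible glued operator, and invoke Lemma~\ref{lem:spec_cyl}(ii) and the continuity of $c$ inside that argument; on the $M$-side the limit extraction must then be run with varying metrics, using the identification maps $\beta$ as in the proof of Lemma~\ref{lem:existencedelta}.

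It is also worth comparing the cores of the two pointwise arguments: the machinery you invoke (exponential decay of cross-sectional modes on the neck, splicing an approximate eigenspinor onto the full $(N,h_i)$, ruling out $L^2$-escape of mass down the cylinder on the $M$-side) is considerably heavier than what the paper uses. There, one only takes cut-offs $\chi_j$ equal to $1$ on the $M$-half and $0$ on the $N$-half of the neck with $|d\chi_j|\leq 2/j$; since $\phi_i$ is harmonic, $D(\chi_j\phi_i)=d\chi_j\cdot\phi_i$, and one of the compactly supported test spinors $\chi_j\phi_i$ on $(M\setminus S,g_i)$ or $(1-\chi_j)\phi_i$ on $(N,h_i)$ has Rayleigh quotient at most $4/j$, see \eqref{eq:D1}--\eqref{eq:D2}, which contradicts the respective spectral gaps once $j\geq 8/\min\{a,c(g,s)\}$. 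No Schauder estimates, Arz\'ela--Ascoli, or mode analysis is needed, and this elementary bound \emph{is} quantitative in exactly the sense your patching paragraph wants (a threshold of the order $1/\min\{a,c(g,s)\}$), so replacing your contradiction/transplantation step by it would let your continuity argument go through essentially verbatim.
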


\begin{figure}
\centering
\begin{tikzpicture}[scale=0.71] 

\draw (2,-1.4) node { $(M\setminus \{r \leq s\} ,g)$};
\draw (12.5,-1.4) node { $(N\setminus \{\hat{r} \geq 2\} ,h)$};

\draw[line width=1] (0,0) .. controls (-2,1) and (-2,2) .. (0,3) .. controls (2,4) and (3,4) .. (4,3.7) .. controls (5, 3.6) and (5.5, 4) .. (6, 2)  ..     controls  (6.5, 1.5) and (6.8, 2) .. (7,2) -- (10,2) .. controls (10.2,1.9) and (10.4, 2.5) .. (11,3) .. controls (11.2,3.2) and (11.4, 3.4) .. (13, 3); 
\draw[line width=1] (13,0) .. controls (11.4, -0.4) and (11.2, -0.2) ..(11,0).. controls (10.4,0.5 ) and (10.2, 1.1) .. (10,1) -- (7,1) .. controls (6.8, 1) and (6.5, 1.5) .. (6,1) .. controls (5.5,-1) and (5,-0.6) .. (4, -0.7) .. controls (3,-1) and (2,-1) .. (0,0);
\draw[line width=1] (0,2) .. controls (1,1.5) .. (2,2);
\draw[line width=1] (0.4,1.8) .. controls (1,2) .. (1.6,1.8);
\begin{scope}[shift={(2,-1)}]
\draw[line width=1] (0,2) .. controls (1,1.5) .. (2,2);
\draw[line width=1] (0.4,1.8) .. controls (1,2) .. (1.6,1.8);
\end{scope}

\draw [decorate,decoration={brace,amplitude=10pt},xshift=-4pt,yshift=0pt]
 (9.7,0.5) -- (7.3,0.5) node [midway,yshift=-17pt] {\footnotesize length
$L(g,s)$};

\draw[dashed] (7,3) -- (7,-1) node[below] {\small $r=s$};
\draw[dashed] (9.7,3) -- (9.7,-1) node[below] {\small $\hat{r}=2$};

\end{tikzpicture}
\caption{$(M\setminus S,g)$ with $(g,s)\in \Rc$ is glued to $(N,h)$ with a cylindrical end on $\{\hat{r}\geq 2\}$ via a cylinder of length $L(g,s)$. If the Dirac operator on $(N,h)$ has a spectral gap, then $L(g,s)$ can be chosen large enough that the resulting metric has again an invertible Dirac operator, see Lemma~\ref{lem:L}.} \label{fig:glue_cyl}
\end{figure}

With this preparation we will put the idea from above into a proposition:

\begin{proposition}[Proved in Lemma~\ref{lem:cup}]\label{prop:cup}
 For  $\rho\define \hat{\rho} \colon \Rf\to (0,1)$ from  Proposition~\ref{prop:existencerho} there is a map
 \begin{align*}
  \Xi_{\mathrm{gr}}\colon& \Rc \times [0,1] \to \Rc
 \end{align*}
that  is 
 \begin{enumerate}[(i)]
  \item well-defined and continuous,
\item $\Xi_{\mathrm{gr}}(.,0)=\mathrm{id}$,
\item  $\Xi_{\mathrm{gr}}((g,s),1) \in  \Upsilon_\rho(\Rf)$ for all $(g,s)\in \Rc$.
\end{enumerate}
\end{proposition}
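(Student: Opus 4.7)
The plan is to construct, for each $(g,s)\in\Rc$, a canonical target in $\Upsilon_\rho(\Rf)$ together with a path in $\Rc$ joining $(g,s)$ to this target. For the target, I would apply Lemma~\ref{lem:L} with $N=T_{n,k}$ and $h=h_{\text{torp}}^{n,k}$: since $D^{h_{\text{torp}}^{n,k}}$ is invertible by the choice of $h_{n-k}$ and the torpedo is cylindrical for $\tilde r\geq 2$, we have $h_{\text{torp}}^{n,k}\in\text{R}_{c_0}(T_{n,k})$ for some $c_0>0$. Lemma~\ref{lem:L} with the constant gap function $c\equiv c_0$ then produces a continuous $L\colon\Rc\to(0,\infty)$ such that the glued manifold $Z((g,s), h_{\text{torp}}^{n,k})$, which is diffeomorphic to $M$, carries a metric with invertible Dirac operator. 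A continuous family of radial diffeomorphisms reparametrizes this grafted metric into standardized form on $D_1\setminus S$, producing an element $\hat g(g,s)\in\Rf$ (with $\delta(\hat g)=1$) depending continuously on $(g,s)$. One then sets $\rho(g,s):=\min(\hat\rho(\hat g(g,s)),\varepsilon(g,s))$ for a continuous $\varepsilon$ (essentially bounded by an inverse power of $L(g,s)$) small enough to make the homotopy below stay in $\Rc$.

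For the homotopy, the natural idea is to realize the passage from $(g,s)$ to $\Upsilon_{\rho(g,s)}(\hat g(g,s))$ as a one-parameter family of grafting-and-rescaling operations. The parameter $t\in[0,1]$ controls simultaneously (a) the length of the cylindrical neck inserted between $(g,s)$ and a partial torpedo, (b) a radial diffeomorphism reshaping the annulus $D_1\setminus D_{s_t}$, where $s_t$ interpolates linearly from $s$ at $t=0$ to $\rho(g,s)$ at $t=1$, and (c) the conformal and warping rescalings dictated by the functions $F$, $f_\rho$, $\eta_\rho$ from Proposition~\ref{prop:existencerho}. At $t=0$ all three operations are trivial and one recovers $(g,s)$; at $t=1$ they assemble into $\Upsilon_{\rho(g,s)}(\hat g(g,s))\in\Upsilon_\rho(\Rf)$. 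Verifying that each intermediate metric lies in $\Riemcu$ reduces to checking that the prescription is a genuine Riemannian metric which is cylindrical for $r\leq s_t$, and this is built into the construction.

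The main obstacle is showing that every metric along the path has invertible Dirac operator, so that the path genuinely lies in $\Rc$. At every time $t$, the intermediate metric can be re-interpreted as the original $(g,s)$ grafted onto a truncated torpedo via a cylinder whose effective length is still bounded below by $L(g,s)$—this is precisely what the smallness of $\rho$ buys us. Lemma~\ref{lem:L} then yields invertibility for these ``re-interpreted'' grafted metrics, while Lemma~\ref{lem:spec_cyl}(ii) propagates the spectral gap continuously under the compact conformal variations that distinguish them from the actual interpolating metric. Continuity of $\Xi_{\mathrm{gr}}$ in the compact-open $C^\infty(g_0)$-topology follows from continuity of $L$, $\hat g$, $\rho$, and of the warping functions. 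The delicate parametric bookkeeping required to match boundary conditions at $r=s_t$ and $r=s$, together with the uniform spectral-gap estimate over compact subsets of $\Rc$, is the technical heart of the argument and the reason this step merits its own lemma.
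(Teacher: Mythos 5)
Your overall architecture (graft a torpedo via a long cylinder using Lemma~\ref{lem:L}, standardize to get a target $\hat g(g,s)\in\Rf$, then interpolate) matches the paper, but the step you yourself identify as the heart of the matter --- invertibility of the Dirac operator along the interpolation --- is not actually established, and the way you set it up would fail. You choose $L$ from Lemma~\ref{lem:L} with the constant gap $c\equiv c_0$ of the torpedo itself. But Lemma~\ref{lem:L} only gives invertibility of the glued metric when the piece beyond the cylinder lies in $\text{R}_{c(g,s)}(N)$, i.e.\ has cylindrical end \emph{and} spectral gap at least the value of $c$ used to produce $L$. At intermediate times your metric beyond the neck is not the torpedo but an interpolation between the standard cylinder metric and the blown-up torpedo metric $y_\rho(h_{torp}^{n,k})$; nothing in your argument shows this interpolated piece has gap $\geq c_0$, so your ``re-interpretation as a grafting with effective length $\geq L(g,s)$'' does not meet the hypotheses of Lemma~\ref{lem:L}. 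Appealing to Lemma~\ref{lem:spec_cyl}(ii) cannot close this: continuity of $\inf\operatorname{spec}(D^2)$ under compactly supported changes only gives openness of invertibility near a given metric, not a lower bound along a whole path, and a continuously varying spectrum is perfectly free to cross zero. The paper's missing ingredient here is its Lemma~\ref{lem:Lint}: the interpolation $G_{\rho,t}=(1-t)(du^2+\sigma_{n-k-1}+\sigma_k)+t\,y_\rho(h_{torp}^{n,k})$ is shown to be conformal to a product with round $S^{n-k-1}$ factor, hence invertible with a \emph{uniform} gap $\zeta(\rho)>0$ for all $t\in[0,1]$, and $L$ is then chosen via Lemma~\ref{lem:L} with $c=\zeta$, so that a single gluing lemma covers every intermediate metric at once. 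The paper also keeps the path spectrally harmless elsewhere by splitting it into a pure diffeotopy (pullback by diffeomorphisms, spectrum unchanged), a change of the parameter $s$ only, and finally the localized interpolation equal to $G_{\rho,2t-1}$ on the model region; your proposal mixes neck insertion, radial diffeomorphisms and the $F,f_\rho,\eta_\rho$ rescalings simultaneously, which makes even the reduction to a gluing statement unclear.

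A secondary but real issue: the proposition requires a single continuous $\rho\colon\Rf\to(0,1)$ so that the fixed subspace $\Upsilon_\rho(\Rf)$ is the target, whereas your $\rho(g,s):=\min(\hat\rho(\hat g(g,s)),\varepsilon(g,s))$ with $\varepsilon$ depending on $L(g,s)$ is a function on $\Rc$; unless it factors through $\hat g(g,s)\in\Rf$ (which your $\varepsilon$ does not obviously do), the endpoint need not lie in $\Upsilon_\rho(\Rf)$ for any fixed admissible $\rho$. In the paper this is avoided because $\rho=\min\{\hat\rho,\rho_0\}$ with $\rho_0$ a universal constant (chosen so that $y_\rho(h_{torp}^{n,k})$ is invertible), and the smallness needed for the homotopy is absorbed into the length $L$, not into $\rho$.
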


Note that this map will \underline{not} map $\Upsilon_\rho(\Rf)$ into itself for all $t\in [0,1]$. Hence, it is not yet our desired homotopy equivalence $\Upsilon_\rho(\Rf)\cong \Rc$. At the end, we will use $\Xi_{\mathrm{gr}}$ away from $\Upsilon_\rho(\Rf)$. Near this subset we will use a different map that will be specified below.

 \subsubsection{Near \texorpdfstring{$\Rf$}{Rf}}\label{step6_1}

 In this section we write down a deformation retract from an open neighbourhood of $\Upsilon_\rho(\Rf)$ to $\Upsilon_\rho(\Rf)$.\medskip 
 
 For that we construct an extension of $\Upsilon_\rho\colon \Rf\to \Upsilon_\rho(\Rf)$ and of its inverse to $\RM$ and $\Rc$, respectively.\medskip 
 
 Let us first collect some auxiliary functions: Let $\rho \colon \Rf\to (0,1)$ be the function of Proposition~\ref{prop:cup} and $\delta\colon \RM\to (0,1)$ as chosen in \eqref{def_delta}.  We choose a continuous extension of $\rho$ to $\RM$ with image in $(0,1)$, also called $\rho$ in the following, which is possible by Tietze's extension theorem since $\Rf$ is closed in $\RM$ (cp. \eqref{def_delta} and above).\medskip 

We then extends the map $\Upsilon_\rho\colon \Rf\to \Rc$ to a continuous map  $\hat{\Upsilon}_\rho\colon \RM\to \Riemcu\subset \Ri\times (0,1)$, $g\mapsto (\hat{y}_{\rho(g)}(g), \rho(g))$, as follows: Let $\nu\colon M\to [0,1]$ be a smooth function such that $\nu\equiv 1$ on $M\setminus D_{1}$ and $\nu\equiv 0$ on $D_{3/4}$. We set 
\begin{align*}
 \hat{y}_{\rho(g)}&(g)\, \define  \nu g  \\
 +&(1-\nu) F^2 \left( a_{\delta(g)}'(r)^2dr^2+a_{\delta(g)}(r)^2\sigma_{n-k-1} + f_{\rho(g)}^2(\eta_{\rho(g)} g|_S + (1-\eta_{\rho(g)})\sigma_k)\right).
\end{align*}
 By construction, $\hat{y}_{\rho(g)}$ has a cylindrical end for $r\leq \rho(g)$ and, hence, $\hat{\Upsilon}_\rho$ really maps into $\Riemcu$. Additionally, $\hat{\Upsilon}_\rho=\Upsilon_\rho$ on $\Rf\subset \RM$ since $g=y_{\rho(g)}(g)$ on $M\setminus D_{3/4}$ for all $g\in \Rf$.\medskip

Moreover, the homeomorphism, cp. Proposition~\ref{prop:existencerho}.(i), \[\Upsilon_\rho\colon \Rf\to \Upsilon_\rho(\Rf)\subset \Rc\] gives rise to the $\check{\rho}\define \rho\circ  \Upsilon_\rho^{-1}$ from $\Upsilon_\rho(\Rf)$ to $(0,1/32)$. Since by Proposition~\ref{prop:existencerho}(ii) $\Upsilon_\rho(\Rf)$ is closed in $\Rc$, we can also extend 
this map to some continuous $\check{\rho}\colon \Rc\to (0,1/32)$.\medskip

We can now also extend $\Upsilon_\rho^{-1}$ to a map $A\colon \Rc\to \Riem$: We choose a smooth function $\tau\colon (0,1]\times (0,1]\to (0,1]$ with \[ \tau_a(r)\define \tau(a,r) \define \left\{\begin{matrix}
         1 & r\leq 2a\\
         0 & r\geq 3a                                                                                                                                                                                                                   \end{matrix}
\right.\]
and define $A(g,s)$ to be 
\begin{align*}
 \left\{ 
 \begin{matrix}
\hspace{4.5cm}g \hspace{5.5cm} \text{on }M\setminus D_1  \\
 F^{-2}\left( z(r)^2dr^2 + g^{n-k-1}(r) + f_{\check{\rho}(g,s)}^{-2} \left((1-\tau_{\check{\rho}(g,s)})g^k(r)\right)\right)+ \tau_{\check{\rho}(g,s)} g^k(1)\  \text{on }D_1
 \end{matrix}
 \right.
\end{align*}
where $z(r)^2dr^2+g^{n-k-1}(r)+g^k(r)=g|_{D_1}$, cp. (II) in Section~\ref{step1}.\medskip 

We note that $g=\frac{dr^2}{r^2}+\sigma_{n-k-1}+\sigma_k$ for $r\leq s$ and $F, f_\rho$ are $1$ on $r\in (3/4,1)$. Together with $A(g,s)= dr^2+r^2\sigma_{n-k-1}+g^k(1)$ on $r\leq \min \{s, \check{\rho}(g,s)\}<1/32$ this  implies that $A(g,s)$ really gives a metric on $M$ and that $A$ is continuous.\medskip 
 
  For $(g,s)= \Upsilon_\rho(\hat{g})$ it is $\check{\rho}(g,s)=\rho(\hat{g})$, $z=F a_{\delta(\hat{g})}'$, $g^{n-k-1}=F^2a_{\delta(\hat{g})}^2\sigma_{n-k-1}$ and $g^k=F^2f_{\rho(\hat{g})}^2(\eta_{\rho(\hat{g})}\hat{g}|_S+(1-\eta_{\rho(\hat{g})})\sigma_k)$. In particular,  we have $g^{k}(1)=\hat{g}|_S$ and, hence, 
  \[ (1-\tau_{\rho(\hat{g})})(\eta_{\rho(\hat{g})}\hat{g}|_S+(1-\eta_{\rho(\hat{g})})\sigma_k)+\tau_{\rho(\hat{g})} g^k(1)=\hat{g}|_S\quad \text{on }r\leq 1.
  \]
 This implies $A(g,s)=\hat{g}$. Hence, $A$ really extends $\Upsilon_\rho^{-1}$.\medskip

Up to now we obtained continuous extensions $\hat{\Upsilon}_\rho\colon \RM\to \Riemcu$ and $A\colon \Rc\to \Riem$ of $\Upsilon_\rho$ and $\Upsilon_\rho^{-1}$, respectively. 
 Let further $T_\epsilon\colon \RM\times [0,1] \to \RM$ be the homotopy \eqref{map:T} for $\RM\cong \Rf$ from the proof of Proposition~\ref{prop:Rf}. This means in particular $T_\epsilon (\Rf \times [0,1])\subset \Rf$ and $T_\epsilon (\RM\times \{1\})\subset \Rf$.\medskip 
 
Since $\Upsilon_\rho(\Rf)\subset \Rc$ and invertibility is an open property on $\Riemcu$ by Remark~\ref{rem:R_metric}, there is an open neighbourhood $\tilde{\mathcal{D}}$ of  $\Upsilon_\rho(\Rf)$ in $\Rc$ with $\tilde{\mathcal{D}}\subset \Rc$. Then $(\Upsilon_\rho\circ T_\epsilon)^{-1}(\tilde{\mathcal{D}})$ is an open neighbourhood of $\Rf\times [0,1]$ in $\RM\times [0,1]$. Hence, there exists an open neighbourhood $U$ of $\Rf$ in $\RM$ with $U\times [0,1]\subset (\hat{\Upsilon}_\rho\circ T_\epsilon)^{-1}(\tilde{\mathcal{D}})$. Since invertibility is an open property also in $\Riem$, $U$ is also an open neighbourhood of $\Rf$ in $\Riem$. Thus, $\mathcal{D}_1\define  A^{-1}(U)$ an open neighbourhood of $\Upsilon_\rho(\Rf)$ in $\Rc$ such that  $\hat{\Upsilon}_\rho(T_\epsilon (A(\mathcal{D}_1)\times [0,1]))\subset \Rc$.\medskip

Again since invertibility is an open property in $\Riemcu$, for every $h\in \Rf$ there is an $\epsilon(h)>0$ such that $B_{\epsilon(h)}(\Upsilon_\rho(h))\subset \Rc$. Moreover, since $\hat{\Upsilon}_\rho \circ A$ is continuous and on $\Upsilon_\rho(\Rf)$ the identity, for each $h\in \Rf$ there is a $\delta(h)\in (0, \epsilon(h))$ such that 
for all $(g,s)\in \Rc$ with $\Vert (g,s)-\Upsilon_\rho(h)\Vert<\delta(h)$ it is 
\[\Vert \hat{\Upsilon}_\rho\circ A((g,s))-\Upsilon_\rho(h)\Vert=\Vert \hat{\Upsilon}_\rho\circ A((g,s))-\hat{\Upsilon}_\rho\circ A(\Upsilon_\rho(h))\Vert<\epsilon(h).\]

Set $\mathcal{D}_2=\cup_{h\in \Rf} B_{\delta(h)}(\Upsilon_\rho(h))\subset \Riemcu$. By the choice of $\delta(h)$ this is an open neighbourhood of $\Upsilon_\rho(\Rf)$ in $\Rc$. Moreover, for all $(g,s)\in \mathcal{D}_2$ we have $a_t\define (1-2t)(g,s)+ 2t \hat{\Upsilon}_\rho(A(g,s))\in \Rc$ for $t\in [0,1]$ as can be seen as follows: By construction of $\mathcal{D}_2$ there is an $h\in \Rf$ with $\Vert (g,s) - \Upsilon_\rho(h)\Vert<\delta(h)$. We estimate

 \begin{align*}
  \Vert a_t- \Upsilon_\rho(h)\Vert\leq& (1-t)\Vert (g,s)- \Upsilon_\rho(h)\Vert + t\Vert \hat{\Upsilon}_\rho(A(g,s))-  \Upsilon_\rho(h)\Vert\\ < &(1-t)\delta(h)+t\epsilon(h)< \epsilon(h).
 \end{align*}
Thus, $a_t\in \Rc$ by the choice of $\epsilon(h)$.\medskip

We set $\mathcal{D}\define \mathcal{D}_1\cap \mathcal{D}_2$ which is an open neighbourhood of $\Upsilon_\rho(\Rf)$ in $\Rc$. Altogether we obtain:

\begin{lemma} \label{prop:nearRf}  The map 
\begin{align*}
 \Xi_{\text{near}}\colon  \mathcal{D} \times [0,1] &\to \Rc,\\
 \nonumber  ((g,s),t) &\mapsto \left\{ \begin{matrix} (1-2t)(g,s)+ 2t \hat{\Upsilon}_\rho(A(g,s)) &\text{for }t\in [0,1/2],\\
 \hat{\Upsilon}_\rho\left(T_\epsilon(A(g,s),(2t-1))\right)&\text{for }t\in (1/2,1]\end{matrix}\right.
 \end{align*}
 is well-defined and continuous and fulfils 
\begin{align*} 
\Xi_{\text{near}}((g,s),0)=&(g,s)  &&\text{for all }(g,s)\in \mathcal{D}\\
\Xi_{\text{near}}((g,s),t)\in &\Upsilon_\rho(\Rf) &&\text{for all }(g,s)\in \Upsilon_\rho(\Rf),\ t\in [0,1], \\
 \Xi_{\text{near}}((g,s),1)\in &\Upsilon_\rho(\Rf) &&\text{for all }(g,s)\in \mathcal{D}.
 \end{align*}

\end{lemma}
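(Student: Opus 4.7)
The plan is to verify the three assertions by chaining together the properties of the auxiliary maps $A$, $T_\epsilon$ and $\hat{\Upsilon}_\rho$ established earlier; essentially nothing new has to be proved, and the real work has been packaged into the construction of $\mathcal{D}$ and of these three maps.

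First I would handle well-definedness and continuity. The map $\Xi_{\text{near}}$ is the composition $(g,s,t) \mapsto (A(g,s),t) \mapsto T_\epsilon(A(g,s),t) \mapsto \hat{\Upsilon}_\rho(T_\epsilon(A(g,s),t))$. Continuity of $A$ follows from inspection of its defining formula: $\check{\rho}$, the cut-off $\tau_{\check{\rho}(g,s)}$ and the decomposition $z(r)^2\,dr^2 + g^{n-k-1}(r) + g^k(r)$ all depend continuously on $(g,s)$. Continuity of $T_\epsilon$ is part of Proposition~\ref{prop:Rf}, and continuity of $\hat{\Upsilon}_\rho$ follows from continuity of $\delta$, $\rho$ together with smoothness in the fiber formulas. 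Well-definedness is then exactly the statement that $\mathcal{D}$ was chosen so that $A(\mathcal{D}) \subset \RM$ and $\hat{\Upsilon}_\rho(T_\epsilon(A(\mathcal{D}) \times [0,1])) \subset \Rc$; in between, $T_\epsilon(A(g,s),t)$ stays in $\RM$ because $T_\epsilon$ is by construction a homotopy of $\RM$ into itself.

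For the first conclusion, let $(g,s) \in \Upsilon_\rho(\Rf)$. Since $A$ extends $\Upsilon_\rho^{-1}$ (this is exactly the identity $A(\Upsilon_\rho(\hat{g})) = \hat{g}$ verified in the construction of $A$), we have $A(g,s) \in \Rf$. Using $T_\epsilon(\Rf \times [0,1]) \subset \Rf$ we obtain $T_\epsilon(A(g,s),t) \in \Rf$ for every $t$, and since $\hat{\Upsilon}_\rho$ agrees with $\Upsilon_\rho$ on $\Rf$ we conclude $\Xi_{\text{near}}((g,s),t) \in \Upsilon_\rho(\Rf)$. For the second conclusion, for an arbitrary $(g,s) \in \mathcal{D}$ we only know $A(g,s) \in \RM$; but $T_\epsilon(\RM \times \{1\}) \subset \Rf$ forces $T_\epsilon(A(g,s),1) \in \Rf$, and the same agreement argument yields $\Xi_{\text{near}}((g,s),1) \in \Upsilon_\rho(\Rf)$.

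The only mildly delicate point — and the one I expect to be the main obstacle in making the write-up rigorous — is the simultaneous existence of a neighbourhood $\mathcal{D}$ with all three required properties: open in $\Rc$, $A(\mathcal{D}) \subset \RM$, and closure under the composition $\hat{\Upsilon}_\rho \circ T_\epsilon \circ A$. This, however, was already secured in the paragraph preceding the statement by combining the openness of invertibility on $\Riem$ and on $\Rc$ (Remark~\ref{rem:R_metric}) with the continuity of the three maps and the compactness of $[0,1]$. No additional analytic input beyond what was established in the preceding subsections is needed here.
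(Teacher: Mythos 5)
Your proposal is correct and follows essentially the same route as the paper, which likewise treats the lemma as a direct consequence of the preceding constructions: $A$ extends $\Upsilon_\rho^{-1}$ so it maps $\Upsilon_\rho(\Rf)$ into $\Rf$, the homotopy $T_\epsilon$ preserves $\Rf$ and sends $\RM\times\{1\}$ into $\Rf$, $\hat{\Upsilon}_\rho$ agrees with $\Upsilon_\rho$ on $\Rf$, and the choice of $\mathcal{D}$ (openness of invertibility plus continuity and compactness of $[0,1]$) gives well-definedness into $\Rc$. Your identification of the existence of $\mathcal{D}$ as the only delicate point, already secured before the statement, matches the paper's presentation exactly.
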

 
\begin{proof}  $\Xi_{\text{near}}(.,0)=\mathrm{id}$ is clear by definition. Moreover, \[\Xi_{\text{near}}((g,s),1)=\hat{\Upsilon}_\rho\left(T_\epsilon(A(g,s),1)\right)\in \hat{\Upsilon}_\rho (\Rf)= {\Upsilon}_\rho(\Rf).\]
Continuity of $\Xi_{\text{near}}$ follows from $T_\epsilon(.,0)=\mathrm{id}$ and the continuity of the involved maps.\medskip 

For $t\leq 1/2$ the image is in $\Rc$ since $\mathcal{D}\subset \mathcal{D}_2$, see above. For $t>1/2$ the image is in $\Rc$ since $A(\mathcal{D})\subset A(\mathcal{D}_1)$ and $\hat{\Upsilon}_\rho(T_\epsilon (A(\mathcal{D}_1)\times [0,1]))\subset \Rc$, see above.\medskip

It remains to check the properties of this map on $\Upsilon(\Rf)$: For $(g,s)\in \Upsilon_\rho(\Rf)$ it is $\hat{\Upsilon}_\rho(A(g,s))=(g,s)$ and, thus, for $t\leq 1/2$ the map is the identity. For $t>1/2$ and $(g,s)\in \Upsilon_\rho(\Rf)$ it follows by $T_\epsilon (\Rf \times [0,1])\subset \Rf$  that $\Xi_{\text{near}}((g,s),t)\in \Upsilon_\rho(\Rf)$.
\end{proof}
\subsubsection{Gluing together}

For our homotopy from $\Rc$ to $\Upsilon_\rho(\Rf)$
we use $\Xi_{\text{near}}$ for elements near enough to $\Upsilon_\rho(\Rf)$.  All other elements in $\Rc$ will first be  moved by $\Xi_{\mathrm{gr}}$ into this neighbourhood, cp. Figure~\ref{fig:gluetog}.\medskip 

\begin{figure}
\centering

\begin{tikzpicture}[scale=0.97]
\draw[line width=1] (4,0) .. controls (6,-1) and (8,1) .. (10,0);
\draw (7,3) .. controls (9,2) and (11,4) .. (13,3);
\draw (4,0) .. controls (5, 1) and (6,0) .. (7,3);
\draw (10,0) .. controls (11, 1) and (12,0) .. (13,3);

\draw[color=darkyellow, line width=1]  (6,0.5) .. controls (7.09,1.604)   .. (8.64,2.03) .. controls (7.8,1.6)  .. (9.32,1.5);
\draw[color=darkyellow] (7.6,2) node {\small $\Xi(z,.)$};

\draw[color =darkblue] (7.6,-0.4) node {\small $\Upsilon_\rho(\Rf)$};
\draw[line width=1, color =blue] (7.5, 0.1) .. controls (9,1) .. (10.5, 3.1);

\draw (11.4,0.2) node {\small $\Rc$};

\draw[color=darkred, dashed, line width=1] (6,0.5) .. controls (7,1.6) .. (10,2.4);
\draw[color=darkred, fill] (6,0.55) node[below] {\small $z:=(g,s)$} circle (0.04cm);
\draw[color=darkred, fill] (10,2.4) circle (0.04cm);
\draw[color=darkred, fill] (8.64,2.03) circle (0.04cm);
\draw[color=darkgreen, fill] (9.32,1.5) node[right] {\small $\Xi_{\text{near}}(y,1)$} circle (0.04cm);
\draw[color=darkgreen, dashed, line width=1] (8.66,2) .. controls (7.8,1.6) .. (9.32,1.5);

\draw[color=darkred] (10.7,2.2) node {\small $\Xi_{\mathrm{gr}}(z,1)$};
\draw[color=darkred] (8.4,3.2) node {\small $y:=\Xi_{\mathrm{gr}}(z,t_{min})\in \partial \mathcal{U}$};

\draw[color=darkred, ultra thin, ->] (8.4,3) .. controls (7,2.5)  and (9,2.5) .. (8.7, 2.1);
 \begin{scope}
 \clip (4,0) .. controls (6,-1) and (8,1) .. (10,0)  .. controls (11, 1) and (12,0) .. (13,3) .. controls (11,4) and (9,2) .. (7,3) .. controls (6,0) and (5,1) ..  (4,0);
 \draw[fill, opacity=0.15] (7.2, -0.6) .. controls (8.3, 2) .. (11.3,4)  .. controls (10.5, 2) .. (7.2,-0.6);
\draw (11.1,2.9) node {\small $\mathcal{U}$};
 \end{scope}

\end{tikzpicture}
\caption{The construction of $\Xi$ using $\Xi_{\mathrm{gr}}$ outside the neighbourhood $\mathcal{U}\subset \Rc$ of $\Upsilon_\rho(\Rf)$ and $\Xi_{\text{near}}$ in $\mathcal{U}$ in Proposition~\ref{prop:sigma}.}\label{fig:gluetog}
\end{figure}

For that  let $\mathcal{U}$ be an open neighbourhood of $\Upsilon_\rho(\Rf)$ such that $\overline{\mathcal{U}}\subset \mathcal{D}$ for the $\mathcal{D}$ of Lemma~\ref{prop:nearRf}.  Then, the map
 \begin{align*}
  t_{\text{min}}\colon \Rc\to [0,1],\qquad 
  (g,s) \mapsto \text{inf} \{t\in [0,1]\ |\ \Xi_{\mathrm{gr}}((g,s),t) \in \overline{\mathcal{U}}\}
 \end{align*}
is continuous. Note that by Proposition~\ref{prop:cup}(iii) and the continuity of $\Xi_{\mathrm{gr}}$, $t_{\text{min}}<1$. Moreover, for $(g,s)\in \Upsilon_\rho(\Rf)$ it is $t_{\text{min}}=0$.\medskip 

From Lemma~\ref{prop:nearRf} and Proposition~\ref{prop:cup} we then directly obtain, cp. also Figure~\ref{fig:gluetog}: 

\begin{proposition}\label{prop:sigma}
The map 
\begin{align*} \Xi &\colon \Rc\times [0,1] \to \Rc\\
((g,s),t) &\mapsto \left\{ 
\begin{matrix}
 \Xi_{\mathrm{gr}} ((g,s),t) & t\leq t_{\text{min}}(g,s)\\
 \Xi_{\text{near}}\left(\Xi_{\mathrm{gr}} \left((g,s),t_{\text{min}}(g,s)\right), \frac{t-t_{\text{min}}(g,s)}{1-t_{\text{min}}(g,s)}\right) & t> t_{\text{min}}(g,s)
\end{matrix}
\right.
\end{align*}

is continuous with the following properties
\begin{enumerate}[(i)]
 \item $\Xi (. ,0)= \mathrm{id}$,
 \item $\Xi (. ,1)\subset \Upsilon_\rho (\Rf)$,
 \item $\Xi ((g,s),t)\subset \Upsilon_\rho(\Rf)$ for all $((g,s),t)\in \Upsilon_\rho (\Rf)\times [0,1]$.
\end{enumerate}
 In particular, $\Upsilon_\rho(\Rf)$ is homotopy equivalent to $\Rc$.
\end{proposition}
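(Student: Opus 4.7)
The plan is to verify each listed property of $\Xi$ by direct reduction to the established behavior of the two building blocks $\Xi_{\mathrm{gr}}$ (Proposition~\ref{prop:cup}) and $\Xi_{\text{near}}$ (Lemma~\ref{prop:nearRf}), using $t_{\min}$ to interpolate between them, and then to extract the homotopy equivalence formally from (i)--(iii). Two preliminary facts are needed: $t_{\min}$ is continuous (asserted in the excerpt and following from continuity of $\Xi_{\mathrm{gr}}$ together with closedness of $\overline{\mathcal{U}}$), and $t_{\min}(g,s)<1$ for every $(g,s)\in\Rc$, which follows at once from Proposition~\ref{prop:cup}(iii) combined with $\Upsilon_\rho(\Rf)\subset\mathcal{U}$ and the continuity of $\Xi_{\mathrm{gr}}$.

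Next I would establish continuity of $\Xi$. Both branches of the definition are continuous on the closed subsets $\{t\leq t_{\min}(g,s)\}$ and $\{t\geq t_{\min}(g,s)\}$ of $\Rc\times[0,1]$ (the rescaling $(t-t_{\min})/(1-t_{\min})$ is continuous because $t_{\min}<1$), so by the gluing lemma it suffices to check that the two formulas agree along the seam $t=t_{\min}(g,s)$. Writing $y\define\Xi_{\mathrm{gr}}((g,s),t_{\min}(g,s))\in\overline{\mathcal{U}}\subset\mathcal{D}$, the first branch returns $y$ while the second returns $\Xi_{\text{near}}(y,0)=\hat{\Upsilon}_\rho(T_\epsilon(A(y),0))$. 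I would then invoke the normalizations $T_\epsilon(\cdot,0)=\mathrm{id}_{\RM}$ (built into the deformation from the proof of Proposition~\ref{prop:Rf}) and $\hat{\Upsilon}_\rho\circ A=\mathrm{id}$ on $\mathcal{D}$, the latter being a direct unwinding of the explicit formulas: on $D_1$ the factors $F^2$, $a_{\delta}$, $f_{\rho}$ occurring in $\hat{\Upsilon}_\rho$ invert the factors $F^{-2}$, $z$, $f_{\check{\rho}}^{-1}$ occurring in $A$, with $\check{\rho}(g,s)=\rho(A(g,s))$ by the compatibility of the Tietze extensions of $\rho$ and $\check{\rho}$.

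With continuity secured, the three properties fall out immediately. For (i), if $t_{\min}(g,s)>0$ the first branch at $t=0$ gives $\Xi_{\mathrm{gr}}((g,s),0)=(g,s)$ by Proposition~\ref{prop:cup}(ii), while if $t_{\min}(g,s)=0$ the second branch at $t=0$ gives $\Xi_{\text{near}}((g,s),0)=(g,s)$ by the identity just used. For (ii), since $t_{\min}<1$ the second branch always governs $t=1$, and $\Xi_{\text{near}}(\cdot,1)\subset\Upsilon_\rho(\Rf)$ by the second clause of Lemma~\ref{prop:nearRf}. For (iii), any $(g,s)\in\Upsilon_\rho(\Rf)\subset\mathcal{U}$ has $t_{\min}(g,s)=0$, so $\Xi((g,s),t)=\Xi_{\text{near}}((g,s),t)$ stays in $\Upsilon_\rho(\Rf)$ for every $t$ by the first clause of Lemma~\ref{prop:nearRf}. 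To conclude the homotopy equivalence, let $i\colon \Upsilon_\rho(\Rf)\hookrightarrow\Rc$ denote the inclusion and set $r\define\Xi(\cdot,1)\colon\Rc\to\Upsilon_\rho(\Rf)$; properties (i)--(ii) make $\Xi$ a homotopy from $\mathrm{id}_{\Rc}$ to $i\circ r$, and property (iii) shows that $\Xi|_{\Upsilon_\rho(\Rf)\times[0,1]}$ furnishes a homotopy within $\Upsilon_\rho(\Rf)$ from $\mathrm{id}_{\Upsilon_\rho(\Rf)}$ to $r\circ i$, so $i$ and $r$ are mutual homotopy inverses.

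The main obstacle I anticipate is the seam-matching $\hat{\Upsilon}_\rho\circ A=\mathrm{id}$ on $\mathcal{D}$ and not just on $\Upsilon_\rho(\Rf)$; once the extensions $A$, $\hat{\Upsilon}_\rho$, $\rho$ and $\check{\rho}$ are chosen consistently in Section~\ref{step6_1} so that the explicit formulas compose to the identity on all of $\mathcal{D}$, the remainder of the argument is pure bookkeeping on top of Proposition~\ref{prop:cup} and Lemma~\ref{prop:nearRf}.
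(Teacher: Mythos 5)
Your reduction of (ii), (iii), of $t_{\text{min}}<1$, and of the final homotopy equivalence to Proposition~\ref{prop:cup} and Lemma~\ref{prop:nearRf} is exactly the bookkeeping the paper leaves implicit (its own proof is just the assertion that the statement follows from these two results), and you are right that the only substantive issue is continuity of $\Xi$ along the seam $t=t_{\text{min}}(g,s)$. However, the identity you use to close the seam, namely $\hat{\Upsilon}_\rho\circ A=\mathrm{id}$ on $\mathcal{D}$ (equivalently $\Xi_{\text{near}}(\cdot,0)=\mathrm{id}$ on $\mathcal{D}$), is false, and it cannot be rescued by choosing the Tietze extensions of $\rho$, $\delta$, $\check{\rho}$ ``consistently''. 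By construction, for any $h\in A(\mathcal{D})$ the metric $\hat{y}_{\rho(h)}(h)$ has on $D_{3/4}$ (where $\nu\equiv 0$) the rigid form $F^2\bigl(a_{\delta(h)}'(r)^2dr^2+a_{\delta(h)}(r)^2\sigma_{n-k-1}+f_{\rho(h)}^2(\eta_{\rho(h)}h|_S+(1-\eta_{\rho(h)})\sigma_k)\bigr)$, in particular a pointwise round $S^{n-k-1}$-factor, and the second entry of $\hat{\Upsilon}_\rho(A(g,s))$ is $\rho(A(g,s))$ rather than $s$. A general element of $\mathcal{D}$ is not of this form: $\mathcal{D}$ is only known to be \emph{some} open neighbourhood of $\Upsilon_\rho(\Rf)$ in $\Rc$, and an arbitrarily small perturbation of a point of $\Upsilon_\rho(\Rf)$ supported in $\{1/2<r<3/4\}$ which keeps the standard form (II) but makes $g^{n-k-1}(r)$ non-round stays in $\Riemcu$, keeps the Dirac operator invertible (openness of invertibility, Remark~\ref{rem:R_metric}), and hence lies in $\mathcal{D}$, while $\hat{\Upsilon}_\rho(A(g,s))\neq(g,s)$ for it no matter how the scalar functions are extended. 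This is also why Lemma~\ref{prop:nearRf} carefully claims $\Xi_{\text{near}}((g,s),t)\in\Upsilon_\rho(\Rf)$ only for $(g,s)\in\Upsilon_\rho(\Rf)$ and says nothing about $\Xi_{\text{near}}(\cdot,0)$ being the identity off $\Upsilon_\rho(\Rf)$. Consequently your gluing-lemma argument breaks down exactly at points $y=\Xi_{\mathrm{gr}}((g,s),t_{\text{min}}(g,s))\in\overline{\mathcal{U}}\setminus\Upsilon_\rho(\Rf)$, where the second branch tends to $\hat{\Upsilon}_\rho(A(y))\neq y$ as $t\to t_{\text{min}}(g,s)^+$; the seam needs a genuinely different mechanism (for instance a second-stage homotopy that really starts at the identity of a neighbourhood, or an argument for the homotopy equivalence that does not go through a single concatenated homotopy).

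Two smaller points. At $t=t_{\text{min}}(g,s)$ the defining formula uses the first branch, so for property (i) the case $t_{\text{min}}(g,s)=0$ is already covered by $\Xi_{\mathrm{gr}}(\cdot,0)=\mathrm{id}$; your appeal to $\Xi_{\text{near}}((g,s),0)=(g,s)$ there is both unnecessary and, off $\Upsilon_\rho(\Rf)$, unjustified by the above. Secondly, continuity of $t_{\text{min}}$ does not follow merely from continuity of $\Xi_{\mathrm{gr}}$ and closedness of $\overline{\mathcal{U}}$: entry times into a closed set are in general only semicontinuous (a trajectory meeting $\partial\mathcal{U}$ tangentially may have nearby trajectories whose entry time jumps), so this point also requires an argument or a more careful choice of $\mathcal{U}$. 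The paper asserts both continuity statements without proof, so your instinct to supply the missing details was the right one, but as it stands the proposal does not establish the continuity of $\Xi$.
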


This finishes the proof of Proposition~\ref{prop:main2}. The very coarse structure of this proof is again summarized in Figure~\ref{fig:not}.

\begin{figure}
\newcommand{\gnoc}{\rotatebox[origin=c]{-90}{$\cong$}}
\begin{empheq}[box=\fbox]{align*}
\RM\!\!\!\!\!\!\! \underset{\text{Sec.}~\ref{step2}}{\overset{\text{homot. equiv.}}{\cong}} \!\!\!\Rf\!\underset{\text{Prop.}~\ref{prop:existencerho}}{\overset{\text{homeom.}}{\cong}}\!\!\!\Upsilon_\rho(\Rf)\!\!\!\!\!\underset{\text{Prop.}~\ref{prop:sigma}}{\overset{\text{homot. equiv.}}{\subset}}\!\!\!\!\!\Rc\\
\text{\tiny{homeom./Sec.~\ref{step1}}}\ \gnoc \hspace{0.7cm}\\
\mathcal{R}^{\text{inv}}(\widetilde{M})\!\!\!\!\! \underset{\phantom{\text{Sec.}~\ref{step2}}}{\overset{\phantom{\text{homot. equiv.}}}{\cong}}\!\!\!\!\!\mathcal{R}^{\text{inv}}_{\frac{1}{2}\text{flat}-D_1}(\widetilde{M})\!\!\!\!\!\underset{\phantom{\text{Prop.}~\ref{prop:existencerho}}}{\overset{\phantom{\text{homeom.}}}{\cong}}\!\!\!\Upsilon_\rho(\mathcal{R}^{\text{inv}}_{\frac{1}{2}\text{flat}-D_1}(\widetilde{M}))\!\!\!\underset{\phantom{\text{Prop.}~\ref{prop:sigma}}}{\overset{\phantom{\text{homot. equiv.}}}{\subset}}\!\!\!\!\!\Rcrev
\end{empheq}
\caption{Coarse structure of the proof of Proposition~\ref{prop:main2}.}\label{fig:not}
\end{figure}

\subsection{The relative version}\label{sec:rel}

Let $M^n$ and $\widetilde M^n$ be two  closed connected nonempty spin manifolds of dimension  $n\geq 3$ where $\widetilde{M}$ can be obtained from $M$ by a spin surgery of codimension $2\leq n-k\leq n-1$.  Let $A$ be a compact subset such that $M\setminus A$ contains the surgery sphere. If we choose $g_0$ in Subsection~\ref{step1} such that $D_2\subset M\setminus A$, then none of the following steps in the proof changes the metric on $A$.
Moreover, we can view $A$ as a compact subset of $\widetilde{M}$ as well.\medskip 

Let $\pi_A\colon \Riem\to \text{Riem}(A)$  and $\widetilde{\pi}_A\colon \text{Riem}(\widetilde{M})\to \text{Riem}(A)$ be both given by $g\mapsto g|_A$. Let $B\define \pi_A(\RM))$. By \cite[Thm.~1.1]{ADH11} $B=\widetilde{\pi}_A(\mathcal{R}^{\text{inv}}(\widetilde{M}))$. 
Restricting  $\pi_A $ resp. $\widetilde{\pi}_A$ to $\RM$ resp. $\mathcal{R}^{\text{inv}}(\widetilde{M})$ gives rise to maps $\RM\to B$ and $\mathcal{R}^{\text{inv}}(\widetilde{M})\to B$.\medskip 

Since none of the  steps in the proof of Proposition~\ref{prop:main2} actually changes the metric on $A$, we actually obtain:
  \begin{proposition}\label{prop:main3} With the notations from above, 
  $\RM$ and $\mathcal{R}^{\text{inv}}(\widetilde{M})$ are  homotopy equivalent over $B$.
\end{proposition}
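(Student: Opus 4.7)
The plan is to verify that every construction carried out in the proof of Proposition~\ref{prop:main2} can be arranged to leave the restriction $g|_A$ untouched; once this is checked, the chain of maps assembled in Figure~\ref{fig:not} automatically covers the identity on $B$, which is precisely the relative statement. First I would fix the background metric $g_0$ so that $D_2\subset M\setminus A$ as already suggested by the authors. This places all the geometric activity surrounding the surgery sphere inside the compact set $D_2$, well separated from $A$. Note also that $B=\pi_A(\RM)=\widetilde{\pi}_A(\mathcal{R}^{\text{inv}}(\widetilde{M}))$ is guaranteed by the surgery theorem \cite[Thm.~1.1]{ADH11} cited before the statement, so the projections $\pi_A\colon\RM\to B$ and $\widetilde{\pi}_A\colon \mathcal{R}^{\text{inv}}(\widetilde{M})\to B$ are both surjective and the relative statement is meaningful.

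Next I would trace the proof of Proposition~\ref{prop:main2} step by step and check the support of each modification. The identification $\RM\cong\Rc$ in Section~\ref{step1} uses the identity on $M\setminus D_2\cong\widetilde M\setminus \widetilde{D_2}$ and only replaces the region $\{r<s\}$ by a standard cylindrical end attached inside $D_2$, hence acts as the identity on $A$. Step (A) of Section~\ref{step2} perturbs $g$ only inside $U_{S,g}(\epsilon(g))$, which can be chosen contained in $D_2$; the homotopy is constant outside. Steps (B) and (C) are realized by diffeomorphisms of $M$ supported in $D_2$ (the diffeotopy extension can be taken with support in $D_2$) and by radial reparametrizations inside $D_1$, so they fix $A$. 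The map $\Upsilon_\rho$ is literally the identity on $M\setminus D_1$; its extension $\hat{\Upsilon}_\rho$ is the identity on $M\setminus D_1$ as well since the cut-off $\nu$ equals $1$ there, and the inverse extension $A$ is similarly identity on $M\setminus D_1$. The torpedo-grafting $\Xi_{\mathrm{gr}}$ of Proposition~\ref{prop:cup} acts by cutting out $\{r<s\}$ and gluing in a cylinder and a torpedo, all performed inside $D_2$, and $\Xi_{\text{near}}$ of Lemma~\ref{prop:nearRf} is a composition of maps with the same property. The glued map $\Xi$ of Proposition~\ref{prop:sigma} inherits this identity-on-$A$ behaviour.

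Then I would address the auxiliary scalar parameters $\epsilon,\delta,\rho,\check\rho,L,t_{\min}$ that enter the construction. Several of these are extended from closed subspaces via Tietze's theorem, but the extension only chooses values of scalars that govern radial modifications inside $D_1$ or the length of a cylinder attached inside $D_2$; none of them ever feeds into the formula for $g|_A$. Consequently, for each map $F$ in Figure~\ref{fig:not} (and for its homotopy inverse $G$ obtained by reading the diagram in reverse), the identity $\widetilde{\pi}_A\circ F=\pi_A$ and $\pi_A\circ G=\widetilde{\pi}_A$ holds on the nose, and the homotopies implementing $G\circ F\simeq\mathrm{id}$ and $F\circ G\simeq\mathrm{id}$ are constant in the $A$-coordinate at every time $t\in[0,1]$. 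By definition this is exactly the assertion that $F$ is a homotopy equivalence over $B$.

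The main obstacle is bookkeeping rather than analytic: one must carefully revisit each step of the proof of Proposition~\ref{prop:main2} and verify that the supports of the various perturbations, diffeomorphisms and cutoff functions (in particular the parametrized diffeotopy extension underlying step (B) and the cutoff $\nu$ and $\tau$ used in Section~\ref{step6_1}) are genuinely contained in $D_2$, and that the Tietze extensions of scalar functions do not secretly influence $g|_A$. No further analytic input beyond what has already been gathered for Proposition~\ref{prop:main2} is required.
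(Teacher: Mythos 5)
Your proposal is correct and follows essentially the same route as the paper: the paper's proof of Proposition~\ref{prop:main3} consists precisely of choosing $g_0$ with $D_2\subset M\setminus A$, invoking \cite[Thm.~1.1]{ADH11} to identify $B=\pi_A(\RM)=\widetilde{\pi}_A(\mathcal{R}^{\text{inv}}(\widetilde M))$, and observing that every step in the proof of Proposition~\ref{prop:main2} leaves the metric on $A$ unchanged. Your step-by-step verification of the supports of the deformations, diffeomorphisms and auxiliary scalar functions is exactly the bookkeeping the paper compresses into that observation.
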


\section{Half-flattening and standardizing}\label{pr:Riemf}

\subsection{Half-flattening}\label{sec:half}

As announced in Step~\ref{step2}\eqref{A} we want to give a parametric version of \cite[Prop. 3.2]{ADH}. \medskip

 Let $\mathrm{inj}^\perp\colon \Riem\to \mathbb R_+$ be such that $\mathrm{inj}^\perp(g)$ is the normal injectivity radius of $S$ w.r.t. $g$, i.e., the supremum of all   $\ell \in \mathbb R_+$ such that $\exp_g^\perp$ restricted to $D^\ell\times S$ is a diffeomorphism onto its image. The map $\mathrm{inj}^\perp$ is continuous, see \cite[p.~177]{Ehr} for the proof for $S=\{pt\}$, the proof for an embedded closed submanifold $S$ is analogous.\medskip
 
Let $\eta\colon \mathrm{dom}(\eta)\define\{ (\epsilon, p, g)\ |\ 2\epsilon <  \mathrm{inj}^\perp (g)\} \subset (0,1)\times M \times \Riem\to [0,1]$ be a continuous function such that  
$$\eta_{\epsilon, g}(p)\define \eta(\epsilon, p,g)\define\begin{cases}
                        1\quad p\in U_{S,g}(\epsilon)\\
                    0\quad p\in M\setminus U_{S.g}(2\epsilon),
        \end{cases}
$$
$|d\eta_{\epsilon,g}|_g                                                                                                                                  \leq \frac{2}{\epsilon}$,
and $\eta_{\epsilon, g}\colon M\to [0,1]$ is smooth for all $\epsilon$ and $g$ with $2\epsilon < \mathrm{inj}^\perp (g)$ (cp.~\eqref{eq_US} for the definition of $U_{S,g}(\epsilon)$).\medskip

For every map  $\epsilon \colon \RM\to (0,1)$ with  $2\epsilon\leq \mathrm{inj}^\perp$ we  introduce the map 
\begin{align}\adh_{\epsilon}\colon \RM \times [0,1]&\to \Riem,\nonumber\\ (g,t)&\longmapsto (1-t\eta_{\epsilon(g),g})g+t\eta_{\epsilon(g),g} (\exp^\perp_g)_*(\xi_{n-k}+ g|_S).\label{eq:Se}
\end{align}
 Note that by the choice of $\epsilon$ the metric $(\exp^\perp_g)_*(\xi_{n-k}+ g|_S)$ exists at least on $U_{S,g}(2\epsilon(g))$ and hence by definition of $\eta_{\epsilon(g),g}$ the image of $S_\epsilon$ really defines a smooth metric on $M$.
For $t=1$ this is exactly the deformation Ammann, Dahl and Humbert have used in \cite{ADH} to show that for fixed $g$  and for $\epsilon(g)$ small enough $\adh_{\epsilon}(g,1)\in \RM$.\medskip 

It is immediate to see that $ \adh_{\epsilon}(g,0)=g$ for all $g\in\RM$ and that 
 $\adh_{\epsilon}(g,1)$ has the desired half-flat structure on $U_{S,g}(\epsilon(g))$ as claimed in \eqref{A}.\medskip
 
 In the following we will prove  that $\hat{\epsilon}\colon \RM\to (0,1)$ can be chosen continuously such that $2\hat{\epsilon}\leq \mathrm{inj}^\perp$ and that for all continuous functions $\epsilon\colon \RM\to (0,1)$ with $\epsilon\leq \hat{\epsilon}$ we have 
\begin{enumerate}[(I)]
\item\label{I}  $\text{image} (\adh_{\epsilon})\subset \RM$ and
\item\label{II} $\adh_{\epsilon}$ is continuous.
                                \end{enumerate}
 
 Let
 \[\Riemf \define \{g\in \RM\ |\ \exists \epsilon'\in (0,1)\colon g|_{U_{S,g}(\epsilon')}= d\hat{r}^2+\hat{r}^2\sigma_{n-k-1}+g|_S\},\] as mentioned in Remark~\ref{rem:Rflat}, where $\hat{r}$ is the normal radial coordinate to $S$ induced by $g$. Then, $S_\epsilon(\Riemf\times [0,1])\subset \Riemf$. Hence, as soon as \eqref{I} and \eqref{II} are proven, we have established that $\adh_\epsilon$ is a homotopy from $\RM$ to $\Riemf$. By construction $S_\epsilon$ is then a homotopy inverse to the inclusion $\Riemf\hookrightarrow \RM$. This map will be the first part of the desired homotopy from $\RM$ to $\Rf$, cp. Remark~\ref{rem:Rflat}.\medskip 
 
   The continuity \eqref{II} directly follows from the continuous dependence of $\eta$, $\epsilon$ and $\exp^\perp$ on $g$. Hence, it remains to show \eqref{I}:
   
 \subsubsection{Proof of \eqref{I}}

   The proof relies on the fact that the modification, even though not $C^1$-small, happens only in a small tubular neighbourhood of the surgery sphere $S$.\medskip 
   
   First we need an auxiliary lemma similar to \cite[Lemma 3.1]{ADH}:
   
       \begin{lemma}\label{continuitydeformation}
   There are continuous maps $\hat{\mu}, C\colon \RM\to \mathbb R_+$ with $2\hat{\mu}\leq \mathrm{inj}^\perp$ such that for all $g\in \RM$ and all $\mu\leq \hat{\mu}(g)$ we have
   $$\|\hat{G}_g\|_{C^0(U_{S,g}(2\mu),g)} \leq C(g)\mu \qquad \|\nabla^g \hat{G}_g\|_{C^0(U_{S,g}(2\hat{\mu}(g)),g)}\leq C(g)$$
      where $\hat{G}_g\define g- (\exp_g^\perp)_*(\xi_{n-k}+g|_S)$.
  \end{lemma}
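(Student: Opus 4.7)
The plan is to exploit the vanishing $\hat G_g|_S\equiv 0$ and then integrate in the normal direction, with continuity in $g$ extracted by a partition-of-unity majorization argument.

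First I would verify the pointwise identity $\hat G_g|_S\equiv 0$. At any $p\in S$, the differential $d(\exp_g^\perp)_{(0,p)}$ is the identity on $T_pS\oplus \nu_{S,p}\cong T_pM$, so the pushforward metric $(\exp_g^\perp)_*(\xi_{n-k}+g|_S)$ agrees with $g$ as a symmetric $2$-tensor at each point of $S$. Hence $\hat G_g$ is a smooth tensor field on a tubular neighborhood of $S$ that vanishes on the submanifold $\{r=0\}=S$.

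Next, using that $\mathrm{inj}^\perp\colon\RM\to\mathbb R_+$ is already continuous (as noted on p.~\pageref{pr:Riemf}), I would choose a continuous $\hat\mu$ with $2\hat\mu<\mathrm{inj}^\perp$, e.g.\ $\hat\mu=\mathrm{inj}^\perp/4$. On the resulting $g$-tube $U_{S,g}(2\hat\mu(g))$ one has Fermi coordinates adapted to $g$; integrating $\nabla^g\hat G_g$ radially from $S$ and using $\hat G_g|_S=0$ gives the pointwise estimate
\[
|\hat G_g|_g(p)\;\leq\; d_g(p,S)\cdot \sup_{U_{S,g}(2\hat\mu(g))}|\nabla^g \hat G_g|_g
\]
for all $p\in U_{S,g}(2\hat\mu(g))$. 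Setting $\tilde C(g):=2\sup_{U_{S,g}(2\hat\mu(g))}|\nabla^g \hat G_g|_g$ then yields both estimates simultaneously: the gradient bound by definition, and the $C^0$ bound since $d_g(p,S)\leq 2\mu$ for $p\in U_{S,g}(2\mu)$ and $\mu\leq\hat\mu(g)$.

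The remaining task, which is the main obstacle, is to replace $\tilde C$ by a \emph{continuous} majorant $C\colon\RM\to\mathbb R_+$. The difficulty is that both the integration domain $U_{S,g}(2\hat\mu(g))$ and the Fermi coordinates themselves vary with $g$, so a priori $\tilde C$ is only upper semicontinuous. To handle this I would fix a reference tubular neighborhood via $\exp_{g_0}^\perp$: for $g$ in a small $C^\infty(g_0)$-neighborhood of a given $g_1\in\RM$, the $g$-tube $U_{S,g}(2\hat\mu(g))$ is sandwiched between two $g_1$-tubes of comparable size, and $\hat G_g$ together with $\nabla^g\hat G_g$ depends continuously on $g$ in $C^0(g_0)$ on any fixed compact neighborhood of $S$ (this uses smooth dependence of $\exp_g^\perp$ on $g$ and the pointwise bounds \eqref{spinorisonorm}-style comparisons). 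This yields a locally uniform upper bound $\tilde C(g)\leq c(g_1)$ valid on a neighborhood of $g_1$. Paracompactness of the metric space $\RM$ (inherited from the $C^\infty(g_0)$-topology) then lets one glue these local bounds via a partition of unity to a continuous $C\geq\tilde C$, finishing the proof.
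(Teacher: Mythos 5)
Your proposal is correct and follows essentially the same route as the paper: establish bounds that are uniform on a small neighbourhood of each $g\in\RM$ (using $\hat G_g|_S=0$ and a gradient bound on a small $g$-tube), then glue them to continuous maps via a partition of unity on the paracompact metric space $\RM$. The only cosmetic differences are that you take $\hat\mu=\mathrm{inj}^\perp/4$ outright instead of also gluing local radii $R_g$, and you derive the $C^0$ estimate from the gradient bound by radial integration, which the paper asserts directly.
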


  \begin{proof}The proof locally mimics the proof of \cite[Lemma 3.1]{ADH} and then uses a covering argument: Let $g\in \RM$. Then, there is an open neighbourhood $U_g\subset \RM$ of $g$, an $R_g>0$ small enough and a $C_{1,g}>0$ such that $\|\nabla^h \hat{G}_h\|_{C^0(U_{S,g}(2R_g),h)}\leq C_{1,g}$ for all $h\in U_g$.  Note that $\hat{G}_g = 0$ on $S$. Hence, there is a $C_{2,g}>0$ such that $|\hat{G}_h(p)|\leq C_{2,g} r_h(p)$ for all $h\in U_g$ and $p\in U_{S,g}(2R_g)$ where $r_h$ is the radial distance function to $S$ w.r.t. $h$. We set $C_g\define \max\{C_{1,g}, C_{2,g}\}$.\smallskip 

  We note that $\{U_g\}_{g\in \RM}$ is an open cover of $\RM$. Since $\RM$ is a metric space, it is in particular paracompact \cite{Rud}. Hence, we have a partition of unity $\chi_{g}$ subordinated to this cover. We set 
  $\hat{\mu}\define \sum_{g\in \RM} R_g \chi_g$ and $C\define \sum_{g\in \RM} C_g\chi_g$. By construction these two maps are automatically continuous and fulfil the estimates of the Lemma.
  \end{proof}

\begin{lemma}\label{lem:existencedelta}
 For any $(g,t)\in\RM \times [0,1]$ there exists a positive number $\mu\define \mu(g,t)<  \mathrm{inj}^\perp(g)/2$ and an open neighbourhood $U\subset \RM\times [0,1]$ of $(g,t)$ such that for all $\mu'<\mu$ and all $(g',t')\in U$ the metric $\adh_{\mu'}(g',t')$ belongs to $\RM$.
\end{lemma}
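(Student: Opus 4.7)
The plan is to argue by contradiction. Suppose no such $(\mu, U)$ exists; then we can extract sequences $\mu_i \downarrow 0$ and $(g_i, t_i) \to (g, t)$ in $\RM \times [0,1]$ for which $h_i := \adh_{\mu_i}(g_i, t_i) \notin \RM$, together with nonzero $\phi_i \in \ker D^{h_i}$ normalized by $\|\phi_i\|_{L^2(M, h_i)} = 1$. The goal is to produce a nontrivial harmonic spinor on $(M, g)$ as a limit of the $\phi_i$, contradicting $g \in \RM$.

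Transport to the fixed spinor bundle via $\psi_i := \beta^g_{h_i}(\phi_i)$. From \eqref{Dg1g2AB} and $D^{h_i}\phi_i = 0$ one obtains
\[ D^g \psi_i \;=\; -\, A^g_{h_i}\circ \nabla^g \psi_i \;-\; B^g_{h_i}\, \psi_i. \]
Since $h_i = g_i$ on $M\setminus U_{S, g_i}(2\mu_i)$ and $\mu_i \to 0$, for every compact $K \subset M \setminus S$ we have $h_i = g_i \to g$ in $C^\infty(K)$ for $i$ large, and by \eqref{spinorisonorm} the coefficients $A^g_{h_i}, B^g_{h_i}$ vanish in $C^\infty(K)$. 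Applying the parametrized Schauder estimate (Lemma~\ref{lem:Schauder}) to the $g_i$ and Arzela-Ascoli (Lemma~\ref{lem:Arzela}) through a diagonal argument over an exhaustion of $M\setminus S$ extracts a subsequence $\psi_i \to \psi_\infty$ in $C^1_{\mathrm{loc}}(M\setminus S)$, and the limit satisfies $D^g \psi_\infty = 0$ weakly on $M\setminus S$. Fatou plus the normalization yields $\psi_\infty \in L^2(M,g)$, and Lemma~\ref{lem:removalsingularityF} promotes $\psi_\infty$ to a smooth harmonic spinor on all of $(M,g)$, with the borderline codimension-2 case $k=n-2$ requiring a separate extension argument exploiting the explicit half-flat form of $h_i$ on the tube (cf. Section~\ref{sec:surg}).

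The main obstacle is to rule out that $\psi_\infty \equiv 0$ through concentration of the $L^2$-mass of $\phi_i$ on the shrinking tube $U_{S, g_i}(2\mu_i)$. To control this, note that Lemma~\ref{continuitydeformation} provides a uniform $C^1$-bound on $h_i$ near $S$, so interior elliptic regularity for the Dirac operator (with $C^1$-bounded coefficients) applied on a slightly larger tube delivers a uniform estimate $\|\phi_i\|_{L^\infty(U_{S, g_i}(2\mu_i))} \leq C$, independent of $i$. Combined with $\mathrm{vol}_{h_i}(U_{S, g_i}(2\mu_i)) = O(\mu_i^{n-k}) \to 0$, this shows $\|\phi_i\|_{L^2(U_{S, g_i}(2\mu_i), h_i)} \to 0$, so $\|\psi_\infty\|_{L^2(M,g)} = 1$. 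Hence $\psi_\infty$ is a nonzero harmonic spinor for $g$, contradicting $g \in \RM$ and completing the proof.
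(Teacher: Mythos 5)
Your overall scheme (contradiction, sequences $\mu_i\to 0$, $(g_i,t_i)\to(g,t)$, transport via $\beta$, $C^1_{\mathrm{loc}}$ convergence away from $S$ by Schauder plus Arz\'ela--Ascoli, then removal of singularities) is the same as the paper's. The genuine gap is in your non-concentration step, which is exactly the heart of the lemma and where the paper argues differently: the paper proves a uniform $H^1(\Sigma^g M,g)$ bound on the whole compact manifold $M$, using the Schr\"odinger--Lichnerowicz formula together with \eqref{Dg1g2AB}, \eqref{spinorisonorm} and the estimates \eqref{metricdistance1}--\eqref{metricdistance2} from Lemma~\ref{continuitydeformation}; weak $H^1$/strong $L^2$ compactness on $M$ then forces the limit to have $L^2$-norm $1$, which rules out any loss of mass near $S$ at once. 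Your substitute -- a uniform $L^\infty$ bound for $\phi_i$ on $U_{S,g_i}(2\mu_i)$ from ``interior elliptic regularity with $C^1$-bounded coefficients'' -- is not justified by the tools you cite. Lemma~\ref{continuitydeformation} only controls $g-\hat G$-type quantities in $C^0$ and $C^1$; the second derivatives of $h_i=\adh_{\mu_i}(g_i,t_i)$ involve $\nabla^2\eta_{\mu_i,g_i}\cdot\hat G_i\sim \mu_i^{-1}$, so the curvature (and hence the potential in $D^2=\nabla^*\nabla+\tfrac{\mathrm{scal}}{4}$) blows up on the transition annulus, and the first-order coefficients of $D^{h_i}$ have no uniform modulus of continuity. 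Consequently neither Lemma~\ref{lem:Schauder} (which needs $h$ in a $C^\infty$-neighbourhood of a fixed metric) nor standard Moser/Schauder/$L^p$ interior theory gives a sup bound with a constant independent of $i$; if such a bound is true it would need a genuine perturbation argument (e.g.\ exploiting that $|A^{g}_{h_i}|\lesssim\mu_i$ while only $|B^{g}_{h_i}|$ stays bounded), which you would have to supply.

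Even granting your $L^\infty$ bound on the shrinking tube, the final ``so $\|\psi_\infty\|_{L^2(M,g)}=1$'' does not follow. $C^1_{\mathrm{loc}}(M\setminus S)$ convergence gives $\int_{M\setminus U_{S,g}(\mu)}|\psi_i|^2\to\int_{M\setminus U_{S,g}(\mu)}|\psi_\infty|^2$ for each fixed $\mu$, and your volume argument kills the mass on $U_{S,g_i}(2\mu_i)$ only; nothing you wrote controls $\limsup_i\int_{U_{S,g}(\mu)\setminus U_{S,g_i}(2\mu_i)}|\psi_i|^2$ uniformly as $\mu\to 0$, i.e.\ concentration at an intermediate scale between $\mu_i$ and a fixed radius is not excluded (and naive interior estimates at scale $r=d(\cdot,S)$ give only $|\phi_i|\lesssim r^{-n/2}$, which is useless for this). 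This is precisely what the paper's global $H^1$-bound (or, in the cylindrical setting of Proposition~\ref{prop:existencerho_old}, the weighted cut-off estimate \eqref{L2decay}) is designed to handle. A minor further point: your promised ``separate extension argument'' for $k=n-2$ is left blank; the removal-of-singularities lemma of \cite{ADH} in fact covers all codimensions $\geq 2$ (the condition in Lemma~\ref{lem:removalsingularityF} should be read as codimension at least two), so no extra argument is needed, but as written this is another unsubstantiated step.
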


\begin{proof}
The proof is obtained as the one of \cite[Lemmata 3.3 and 3.4]{ADH} in a parametrized way: Assume there is no such $\mu$ and $U$. Then there are sequences $\mu_i\to 0$, $g_i\to g$ in $\RM$ and $t_i\to t \in [0,1]$ as $i\to\infty$ such that there are harmonic spinors to $\hat{g}_i\define \adh_{\mu_i}(g_i,t_i)$, i.e., $\varphi_i\in\Gamma(\Sigma^{\hat{g}_i}M)$ satisfying $D^{\hat{g}_i}\varphi_i=0$ and $ \int_M|\varphi_i|^2\dvol_{\hat{g}_i}=1$.\medskip

Let $c(g)$ be such that $ c(g)^{-1} \Vert .\Vert_{C^j(g_0)}\leq \Vert .\Vert_{C^j(g)}\leq c(g) \Vert .\Vert_{C^j(g_0)}$ for all $(0,2)$-tensors and $j=0,1$.  Such a constant exists since $M$ is compact. Note that $c(g)$ can be chosen continuously in $g$.\medskip 

Using  $\mu_i\leq \hat{\mu}(g)$ for $i$ large enough, $g_i\to g$,  Lemma~\ref{continuitydeformation} and  that $\hat{g}_i=g_i-t_i\eta_{\mu_i, g_i}\hat{G}_i$ for $\hat{G}_i\define g_i- (\exp_{g_i}^\perp)_*(\xi_{n-k}+g_i|_S)$, we have for large enough $i$ that 
\begin{align}\nonumber
\|g-\hat{g}_i\|_{C^0(g)} &\leq \| g-g_i\|_{C^0(g)}+ t_i\|\eta_{\mu_i, g_i} \hat{G}_i\|_{C^0(U_{S,g_i}(2\mu_i),g)}\\  
&\leq c(g)\| g-g_i\|_{C^0(g_0)}+ c(g)c(g_i)^{-1}C(g_i)\mu_i\nonumber \\
&\leq c(g)\| g-g_i\|_{C^0(g_0)}+ 2C(g_i)\mu_i
,\label{metricdistance1} 
\end{align}
and similar
\begin{align}
 \|\nabla^{g}(g-\hat{g}_i)&\|_{C^0(g)} \leq  \Vert\nabla^g(g-g_i)\|_{C^0(g)}+   t_i\|\nabla^{g}(\eta_{\mu_i, g_
 i} \hat{G}_i)\|_{\mathcal{C}^0(U_{S,g_i}(2\mu_i),g)}\nonumber\\
 &  \leq c(g)\big(\| g-g_i\|_{C^1(g_0)} + c(g_i)^{-1} \|\eta_{\mu_i, g_
 i} \hat{G}_i\|_{\mathcal{C}^1(U_{S,g_i}(2\mu_i),g_i)})\nonumber\\
 &  \leq c(g) \| g-g_i\|_{C^1(g_0)} +2 C(g_i)(\mu_i +3).
 \label{metricdistance2}  
 \end{align} 
Since $C(g)$ depends continuously on $g$ by Lemma~\ref{continuitydeformation} and  $g_i\to g$, there is a constant $\hat{C}>0$ that bounds $C(g_i)$ for $i$ large enough.\medskip
 
 Since for any value of $i$ the spinor $\varphi_i$ belongs to a different spinor bundle,  we use the identification maps $\beta_g^{\hat{g}_i}$ from \eqref{spinorisonorm}  and $\Sigma^gM$ as a reference bundle: $\beta_g^{\hat{g}_i}\varphi_i\in \Gamma(\Sigma^gM)$.\medskip 
 
 Note that $\Vert \beta_g^{\hat{g}_i}\varphi_i\Vert^2_{L^2(g)}=\int_M |\varphi_i|^2 \dvol_{g}\to 1$ as $i\to \infty$. We proceed by showing via contradiction that the sequence $\beta_g^{\hat{g}_i}\varphi_i$ is bounded in $H^1(\Sigma^gM, g)$. For that suppose that 
$$\alpha_i\define \sqrt{\int_M {|\nabla^{g}(\beta_g^{g_i}\varphi_i)|^2_{g}}\dvol_g}$$ diverges for $i\to\infty$.  Let $\psi_i\define \alpha_i^{-1} \beta_{g}^{{\hat{g}_i}}\varphi_i$. Then, $
 {}^gD^{\hat{g}_i}\psi_i 
 =\alpha_i^{-1} \beta_{g}^{\hat{g}_i}D^{\hat{g}_i}\varphi_i=0$  by \eqref{Dg1g2}. 
  Using the Schr\"odinger-Lichnerowicz formula and \eqref{Dg1g2AB}, we obtain
 \begin{align*}
  1 &= \int_M |\nabla^{g}\psi_i|^2\dvol_g = \int_M{\left(|D^g\psi_i|^2-\frac{1}{4}\text{scal}_{g}|\psi_i|^2\right) \dvol_g} \\[3mm] 
  &\leq 2 \int_M{\left(|{}^{g}D^{\hat{g}_i}\psi_i|^2+|A^{g}_{\hat{g}_i}\nabla^{g}\psi_i|^2+|B_{\hat{g}_i}^{g}\psi_i|^2\right)\dvol_g}+\hat{s}\int_M{|\psi_i|^2 \dvol_g}\end{align*}
  where $\hat{s}\define \sup_M \text{scal}_g$. The first integral is estimated using \eqref{spinorisonorm}, \eqref{metricdistance1} and \eqref{metricdistance2}. Moreover,  $\int_M|\psi_i|^2\dvol_g=\alpha_i^{-2}\int_M|\varphi_i|^2\dvol_g$. Thus, we obtain with $\epsilon_i\define c(g)\Vert g-g_i\Vert_{C^\infty(g_0)}$ that 
  \begin{align*}
  1&\leq 2C(\epsilon_i + 2\hat{C} \mu_i)^2\underbrace{\int_{M}{|\nabla^{g}\psi_i|^2\dvol_{g}}}_{= 1}+2C(7\hat{C}+\epsilon_i)^2 \underbrace{\int_{M}|\psi_i|^2\dvol_g}_{= 2\alpha_i^{-2}} \\
  &\quad + \hat{s}\alpha_i^{-2}\int_M|\varphi_i|^2\dvol_g \to 0.\end{align*}
               
 This gives a contradiction and implies that $\beta_g^{\hat{g}_i}\varphi_i$ is bounded in $H^1(\Sigma^gM, g)$.\label{page_argH1} Hence, a subsequence  converges weakly in $H^1(\Sigma^gM,g)$ and strongly in $L^2(\Sigma^{g}M,g)$ to some $\varphi\in  \Gamma(\Sigma^gM)$ with $\Vert \varphi\Vert_{L^2(M,g)}=1$.\medskip 
 
 Fix $\mu>0$. For $i$ big enough $\mu_i<\mu$. Since $\eta_{\mu_i, g_i}\equiv 0$ on $M\setminus U_{S,g_i}(2\mu_i)$ and  $M\setminus U_{S,g}(3\mu)\subset M\setminus U_{S,g_i}(2\mu_i)$ for large $i$, the metrics $g_i$ and $\hat{g}_i=S_{\mu_i}(g_i,t_i)$ coincide on $M\setminus U_{S,g}(3\mu)$. Together with Lemma~\ref{lem:Schauder}  we see that  $\Vert\beta_{g}^{\hat{g}_i}\varphi_i\Vert_{C^2(M\setminus U_{S,g}(3\mu),g)}$ is bounded.\medskip

 Thus, by  Arz\'ela-Ascoli, Lemma~\ref{lem:Arzela}, the limit spinor $\varphi$ is in $C^1_\text{loc}(M\setminus U_{S,g}(3\mu),g)$ for $\mu >0$. Since the limit is the same for different $\mu$, the limit spinor satisfies the equation $D^g\varphi=0$ on $M\setminus S$ and $\Vert \varphi\Vert_{L^2(M,g)}=1$. Using  Lemma~\ref{lem:removalsingularityF}  $\varphi$ is a nonzero harmonic spinor on all of $(M,g)$ which gives the contradiction. 
  \end{proof}
        
\begin{corollary}\label{cor:deltaconstant} There is a continuous function 
  $\hat{\epsilon}\colon \RM\to (0,1]$ with $2\hat{\epsilon}< \mathrm{inj}^\perp$ and such that for all $g\in \RM$, $t\in [0,1]$ and all $\mu'<\hat{\epsilon}(g)$  the metric \[\adh_{\mu'}(g,t)= (1-t\eta_{\mu',g})g+ t\eta_{\mu',g} (\exp_g^\perp)_* \left(\xi_{n-k}+\sigma_k\right)\] belongs to $\RM$.
\end{corollary}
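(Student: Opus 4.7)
The plan is to bootstrap Lemma~\ref{lem:existencedelta}, which gives pointwise data $(\mu(g,t), U)$, into a globally continuous function $\hat{\epsilon}$ on all of $\RM$. The two upgrades are uniformity in $t$ (via compactness of $[0,1]$) and continuity in $g$ (via a partition of unity on $\RM$, which is paracompact as a metric space).

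First I would fix $g \in \RM$. For each $t \in [0,1]$, Lemma~\ref{lem:existencedelta} produces $\mu(g,t) > 0$ and an open neighbourhood of $(g,t)$ in $\RM \times [0,1]$, which I may shrink to a product $V_{g,t} \times I_{g,t}$. The family $\{I_{g,t}\}_{t \in [0,1]}$ is an open cover of the compact interval $[0,1]$, so finitely many $I_{g,t_1},\ldots,I_{g,t_N}$ suffice. Setting
\[
V_g \define \bigcap_{i=1}^{N} V_{g,t_i}, \qquad \mu_g \define \min_{1 \leq i \leq N} \mu(g,t_i) > 0,
\]
and using the continuity of $\mathrm{inj}^\perp$ to further shrink $V_g$ if necessary so that $2\mu_g \leq \mathrm{inj}^\perp(g')$ for every $g' \in V_g$, I obtain the following uniform statement: for all $g' \in V_g$, all $t \in [0,1]$, and all $\mu' < \mu_g$, the metric $\adh_{\mu'}(g',t)$ belongs to $\RM$.

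Next, since $\RM$ is metrizable and hence paracompact, the open cover $\{V_g\}_{g \in \RM}$ admits a subordinate locally finite partition of unity $\{\chi_g\}_{g \in \RM}$. I define
\[
\hat{\epsilon}(g') \define \sum_{g \in \RM} \mu_g \, \chi_g(g'),
\]
which is continuous by local finiteness. To verify the claim, fix any $g' \in \RM$ and any $\mu' < \hat{\epsilon}(g')$. Because $\hat{\epsilon}(g')$ is a convex combination of those $\mu_g$ with $\chi_g(g') > 0$, there exists $g^* \in \RM$ with $\chi_{g^*}(g') > 0$ and $\mu_{g^*} \geq \hat{\epsilon}(g') > \mu'$. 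In particular, $g' \in \mathrm{supp}(\chi_{g^*}) \subset V_{g^*}$, so $g'$ and $\mu'$ lie in the regime for which the uniform statement above applies, giving $\adh_{\mu'}(g',t) \in \RM$ for every $t \in [0,1]$. The same convex-combination argument shows $2\hat{\epsilon}(g') \leq \max_{g:\chi_g(g')>0} 2\mu_g \leq \mathrm{inj}^\perp(g')$.

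The main (and essentially only) obstacle is the uniformity step in $t$; this is purely topological once Lemma~\ref{lem:existencedelta} is in hand, and is handled cleanly by the compactness of $[0,1]$. The partition-of-unity construction then mirrors exactly the one already used in Lemma~\ref{continuitydeformation} and raises no new difficulties.
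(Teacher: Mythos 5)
Your argument is correct and is essentially the paper's own proof: both bootstrap Lemma~\ref{lem:existencedelta} via a partition of unity on a paracompact metric space together with compactness of $[0,1]$ (the paper takes the partition of unity on $\RM\times[0,1]$ and then minimizes over $t$, whereas you extract a finite subcover in $t$ first and take the partition of unity on $\RM$ alone --- the same two steps in the opposite order), and your explicit shrinking of $V_g$ so that $2\mu_g\leq \mathrm{inj}^\perp$ holds on all of $V_g$ is a welcome precision. The only detail to add is a cap at $1$, e.g.\ replacing $\mu_g$ by $\min\{\mu_g,1\}$, so that $\hat{\epsilon}$ takes values in $(0,1]$ as stated.
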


\begin{proof}
For each $(g,t)\in \RM\times [0,1]$ let $\mu(g,t)$ resp. $U(g,t)$ be the $\mu$ resp. $U$ obtained in Lemma~\ref{lem:existencedelta}. 
Let now $\chi_{g,t}$ be a partition of unity subordinated to the open cover $\cup_{(g,t)\in \RM \times [0,1]} U(g,t)= \RM\times [0,1]$. We set \[\hat{\mu}(g,t) \define \sum_{(g',t')\in \RM\times [0,1]} \mu(g',t')\chi_{g',t'}(g,t).\] 
By construction, $\hat{\mu}$ is continuous and positive and $\hat{\mu} (g,t)\leq \text{sup}_{(g,t)\in U(g',t')} \mu(g',t')$. Thus,  $\hat{\mu} (g,t)$ fulfils Lemma~\ref{lem:existencedelta} for an appropriate $U$. Let $\hat{\epsilon}\colon \RM\to (0,1]$ be defined as $g\mapsto \min\{ \min_{t\in [0,1]} \hat{\mu}(g,t), 1\}$. Then $2\hat{\epsilon}< \mathrm{inj}^\perp$. Since $\hat{\mu}$ is continuous and $[0,1]$ is compact, the image of $\hat{\epsilon}$ is really a subset of $(0,1]$ and $\hat{\epsilon}$ is again continuous.
\end{proof}

\begin{remark}
 In case we would state Lemma~\ref{lem:existencedelta} only  for $\mu'=\mu$, the result would directly follow from the original versions in \cite{ADH} and that invertibility is an open property. But since the function $\hat{\epsilon}$  in Corollary~\ref{cor:deltaconstant} needs to be specified later and since we do not know yet how small $\hat{\epsilon}$ needs to be, we prove here everything for all positive $\mu'$ less than a threshold. 
\end{remark}

\subsection{Standardizing}\label{sec_stand}

First we will construct the diffeomorphisms for \eqref{B} on page~\pageref{B}.\medskip 

For that recall from page~\pageref{B} that we choose a continuous family of smooth monotonically increasing functions $\{ a_{\epsilon}\colon [0,2]\to [0,2]\}_{\epsilon\in (0,1]}$ with $a_{\epsilon}(r)=r$ for $r\in (0, \frac{\epsilon}{4})$, $a_{\epsilon}(1)=\epsilon$, $a_\epsilon|_{[3/2,2]}=\mathrm{id}$ and $a_1\equiv \mathrm{id}$ and define
\[ \Rf \define \{ g\in \RM\ |\,  \exists \delta\in (0,1]\colon\! g|_{D_1\setminus S} = a_{\delta}'(r)^2dr^2 + a_{\delta}(r)^2\sigma_{n-k-1} +\! g|_S\}.\]
The $\delta$ in the last definition is uniquely determined by $g$, since $a_\delta(1)=\delta$. Hence, we have a continuous function $\delta\colon \Rf\to (0,1]$. Since $\Rf\subset \RM$ is a closed subset (see \eqref{B} on page~\pageref{eq_Rf}), this function is extendable to a continuous positive function on all of $\RM$ by Tietze's extension theorem. In the following, we choose such an extension and call it 
\begin{align}\label{def_delta}\delta\colon \RM\to (0,1],\end{align} as well.

\begin{lemma}\label{lem:stand}
 There  are continuous maps $\Delta\colon \RM\to (0,\infty)$, $\Theta\colon \RM\to \Riem$ and $\Phi\colon \RM\times [0,1]\times M\to M$ such that for all $g\in \RM$, $t\in [0,1]$
 \begin{enumerate}[(i)]
  \item $\Phi_{g,t}\define \Phi(g,t,.)\in \mathrm{Diff}(M)$ and $\Phi_{g,0}=\mathrm{id}$,
  \item $U_{S,g}(\Delta(g))\!\Subset\! D_2$ and $(\exp_g^\perp)^{-1}$ is a well-defined diffeomorphism on $U_{S,g}(\Delta(g))$,
  \item $\Phi_{g,t}|_{M\setminus D_2}=\mathrm{id}$ and $\Phi_{g,t}|_{U_{S,g}(\Delta(g))}= \exp_{(1-t)g+t\Theta(g)}^\perp\circ (\exp_g^\perp)^{-1}$,
  \item $D_u=U_{S, \Theta(g)}(a_{\delta(g)}(u))$ for all $u\in [0,1]$ with $\delta\colon \RM\to (0,1]$ as introduced above.
 \end{enumerate}
In particular, $(\Phi_{g,t})_*g\in \Rf$ for all $g\in \Rf$ and all $t\in [0,1]$.
 \end{lemma}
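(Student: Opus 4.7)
Since $\mathrm{inj}^\perp\colon\Riem\to(0,\infty)$ is continuous (Section~\ref{sec:half}) and the $g$-radius of the largest tube around $S$ still contained in $D_2$ depends continuously on $g$, I would set $\Delta(g)$ to be a small continuous fraction of the minimum of the two, securing (ii). To build $\Theta$, I would first extend $\delta\colon\Rf\to(0,1]$ continuously to $\RM$ via Tietze's theorem, which is legal because $\Rf$ is closed in $\RM$ by Proposition~\ref{prop:existencerho}(ii). In the $g_0$-normal coordinates $(r,\theta,s)\in[0,2]\times S^{n-k-1}\times S$ on $D_2$, define the warped-product metric $g_{\mathrm{rad}}(g)\define a_{\delta(g)}'(r)^2 dr^2+a_{\delta(g)}(r)^2\sigma_{n-k-1}+g|_S$ on $D_{3/2}$, and then $\Theta(g)\define\chi\cdot g_{\mathrm{rad}}(g)+(1-\chi)\cdot g$ via a fixed smooth cutoff $\chi$ equal to $1$ on $D_1$ and $0$ outside $D_{3/2}$. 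Rescaling the $(1-\chi)$-piece by a sufficiently large continuous constant prevents any $\Theta(g)$-geodesic from a point of $D_1$ to $S$ from leaving $D_1$, so the radial ray is minimizing and $d_{\Theta(g)}(p,S)=a_{\delta(g)}(r(p))$ on $D_1$, which is (iv).

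\textbf{Constructing $\Phi$.} On $U_{S,g}(\Delta(g))$ define the local isotopy $\Psi_{g,t}\define \exp^\perp_{(1-t)g+t\Theta(g)}\circ(\exp_g^\perp)^{-1}$, which, after shrinking $\Delta(g)$ further if needed (still continuously in $g$), is a continuous family of embeddings into $D_2$ starting at the identity. Let $X_{g,t}\define (\partial_t\Psi_{g,t})\circ\Psi_{g,t}^{-1}$ be the generating time-dependent vector field on $V_{g,t}\define\Psi_{g,t}(U_{S,g}(\Delta(g)))\subset D_2$. I would pick a continuous family of cutoffs $\lambda_g\colon M\to[0,1]$ supported in $D_2$ and equal to $1$ on a neighborhood of $\bigcup_{t\in[0,1]}V_{g,t}$, extend $X_{g,t}$ smoothly to a neighborhood and multiply by $\lambda_g$ to obtain a compactly supported time-dependent vector field $\widetilde{X}_{g,t}$ on $M$, and take $\Phi_{g,t}$ to be its time-$t$ flow. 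Continuity of $\Phi$ in $(g,t,p)$ follows from the standard parametric form of the flow construction, and (i)--(iii) hold by design.

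\textbf{Preservation of $\Rf$ and the main obstacle.} For $g\in\Rf$ the metric already has the warped-product form with parameter $\delta(g)$ on all of $D_1$, so $\Theta(g)=g$ there; hence $(1-t)g+t\Theta(g)=g$ on $D_1$ and $\Psi_{g,t}=\mathrm{id}$ on $U_{S,g}(\Delta(g))$, which forces $X_{g,t}\equiv 0$. If the extension of $X_{g,t}$ to a neighborhood of $V_{g,t}$ is arranged to depend linearly on the local data (so that it vanishes whenever $X_{g,t}$ does), then $\widetilde{X}_{g,t}\equiv 0$ on all of $D_1$ and $\Phi_{g,t}|_{D_1}=\mathrm{id}$. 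Combined with $\Phi_{g,t}|_{M\setminus D_2}=\mathrm{id}$, this gives $(\Phi_{g,t})_*g=g$ on $D_1$ and on $M\setminus D_2$, so the pushforward stays in $\Rf$ with the same $\delta$. The main technical difficulty is precisely this double demand on the parametric isotopy extension: it must be continuous in $g\in\RM$ in the compact-open $C^\infty$-topology and, for $g\in\Rf$, reduce to the identity on all of $D_1$ rather than merely on the sub-tube $U_{S,g}(\Delta(g))$. A secondary care point is making the interpolation between $g_{\mathrm{rad}}(g)$ and $g$ on $D_{3/2}\setminus D_1$ rigid enough that no shortcut appears and (iv) is not spoiled.
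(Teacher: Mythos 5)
Your proposal is correct and follows essentially the same route as the paper: the same cutoff interpolation $\Theta(g)$ towards the warped-product model on $D_1$, a continuous $\Delta$ controlled by (normal) injectivity radii of the interpolated family, a Tietze extension of $\delta$ to $\RM$, and a parametric isotopy extension obtained by cutting off the generating time-dependent vector field of $\exp^\perp_{(1-t)g+t\Theta(g)}\circ(\exp_g^\perp)^{-1}$ and taking its flow. Two minor remarks: the rescaling of the $(1-\chi)$-piece for (iv) is unnecessary, since any path from $p\in D_1$ to $S$ already has length at least $a_{\delta(g)}(r(p))$ from its portion inside $D_1$ alone, whatever $\Theta(g)$ does outside; and your claim that $\Psi_{g,t}=\mathrm{id}$ on all of $U_{S,g}(\Delta(g))$ for $g\in\Rf$ requires $\Delta\leq\delta$ there (the paper instead arranges $\Delta|_{\Rf}>\delta$ and reads off $\Phi_{g,t}|_{D_1}=\mathrm{id}$ directly from (iii)), so either cap $\Delta$ by the extended $\delta$ or argue only that the generating field vanishes on $D_1$, which already forces $\Phi_{g,t}|_{D_1}=\mathrm{id}$.
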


 \begin{proof} Let $\lambda\colon M\to M$ be a smooth function with $\lambda|_{D_1}\equiv 1$ and $\lambda|_{M\setminus D_2}\equiv 0$. Then the map 
\begin{align*}
 \Theta\colon \RM&\to \text{Riem}(M),\\
 g&\mapsto (1-\lambda)g+\lambda\left( a_{\delta(g)}'(r)^2dr^2 + a_{\delta(g)}(r)^2\sigma_{n-k-1}+g|_S\right),
\end{align*}
is continuous and fulfils (iv). Note that we do not claim that $\Theta(g)$ has invertible Dirac operator in general. For $g\in \Rf$ we have $\Theta(g)|_{D_1}=g|_{D_1}$.\medskip  
 
 Let $\mathrm{inj}^\perp\colon \RM\to \mathbb R_+$ be  again the normal injectivity radius to $S$ and define $\tilde{\Delta}(g)\define \min\{\sup\{ a\ |\ U_{S,g}(a)\subset D_{3/2}\}, \min_{\tau\in [0,1]} \mathrm{inj}^\perp ((1-\tau)g+\tau \Theta(g))\}$.
For $g\in \Rf$ we have $\mathrm{inj}^\perp(g)> \delta(g)$. Since $g\in \Rf$ coincides with $\Theta(g)$ on $D_1$, this implies $\mathrm{inj}^\perp ((1-\tau)g+\tau \Theta(g))>\delta (g)$. Thus, for all $g\in \Rf$ it is $\tilde{\Delta}|_{\Rf}> \delta$. Hence, there is a continuous function $\Delta\colon \RM\to (0,\infty)$ with  \[\Delta (g) <
 \tilde{\Delta}(g)\quad \text{and}\quad \Delta|_{\Rf}> \delta.\]
 By construction $U_{S,g}(\Delta(g))\subset D_{3/2}\Subset D_2$ and the first property in (ii) is fulfilled. Moreover, $U_{S,g}(\Delta(g))\supset U_{S,g}(\delta(g))=D_1$ for $g\in \Rf$.\medskip

If we have a $\Phi$ fulfilling (i) and (iii), then the construction of $\Theta$ and $\Delta$ ensures that for $g\in \Rf$ we have $\Phi_{g,t}|_{D_1=U_{S,g}(\delta(g))}=\mathrm{id}$. Thus, $(\Phi_{g,t})_*g\in \Rf$ for all $(g,t)\in \Rf\times [0,1]$ and (ii) is fulfilled.\medskip 
  
  Thus, it remains to construct $\Phi$ such that (i) and (iii) are fulfilled:  Let $\Delta'\colon \RM \to (0,\infty)$ be a continuous function with $\Delta < \Delta' < \tilde{\Delta}$. Let $\hat{\eta}\colon \RM \times M\to [0,1]$ be continuous such that $\hat{\eta}_g\define \hat{\eta} (g,.)$ is smooth, $\hat{\eta}_g|_{U_{S,g}(\Delta(g))}=1$ and $\hat{\eta}_g|_{M\setminus U_{S,g}(\Delta'(g))}=0$.  We set 
  \begin{align*}
   X_g(p,t)\define \hat{\eta}_g(p)\frac{d}{d\tau}|_{\tau=t} \left( \exp_{(1-\tau)g+\tau\Theta(g)}^\perp\circ (\exp_g^\perp)^{-1}\right)(p) + \partial_t. 
  \end{align*}
  This is well-defined: by the choice of $\Delta'$ and $\hat{\eta}$ we have that $\mathrm{inj}^\perp ((1-\tau)g+\tau \Theta(g))> \Delta'(g)$ and hence, $(\exp_g^\perp)^{-1} (U_{S,g}(\Delta'(g)))\subset \text{domain} (\exp^\perp_{(1-\tau)g+\tau \Theta(g)})$.\medskip
  
  By construction $X_g$ is a smooth vector field on $M\times [0,1]$ that depends continuously on $\RM$. We note that $X_g \equiv \partial_t$ on $(M\setminus D_2) \times [0,1]$ and $X_g= \frac{d}{dt} \exp_{(1-t)g+t\Theta(g)}^\perp\circ (\exp_g^\perp)^{-1}+\partial_t$ on  $U_{S,g}(\Delta(g)) \times [0,1]$ (which is needed to give the prescribed parts of $\Phi_{g,t}$ in (iii)). Hence, as in \cite[Thm.~2.4.2]{Wall} 
   $X_g$ defines diffeomorphisms $\Phi_{g,t}$ with the desired properties.
 \end{proof}

\begin{proof}[Proof of Proposition~\ref{prop:Rf}]\label{pr:Rf}
Let $\delta, \Delta \colon \RM \to (0,\infty)$ be the continuous functions from above. Let $\hat{\epsilon}\colon \RM\to (0,1]$ be as in Corollary~\ref{cor:deltaconstant}. We define $\epsilon \colon \RM\to (0,1]$ by $\epsilon(g)\define \min \{\hat{\epsilon}(g),  \delta(g)/8, \sup\{ r\in [0,1]\ |\ D_r\subset U_{S,g}(\Delta(g))\}\}$. Since also $\hat{\epsilon}$ is continuous, $\epsilon$ is continuous as well. Moreover, $2\epsilon\leq 2\hat{\epsilon}<\mathrm{inj}^\perp$.\medskip 
 
 Let $T_\epsilon\colon \RM\times [0,1] \to \Riem$ be defined by 
 \begin{align}
  (g,t)\mapsto \left\{ \begin{matrix}
                        S_\epsilon(g, 3t) & t\in [0, \frac{1}{3}]\\
                        (\Phi_{g, 3t-1})_*  S_\epsilon(g,1)& t\in (\frac{1}{3}, \frac{2}{3}]\\
                        \left( a_{ (3-3t)\delta(g) + (3t-2)\epsilon(g)} \circ a_{\delta(g)}^{-1}\right)_*  (\Phi_{g,1})_* S_\epsilon(g,1)& t\in (\frac{2}{3},1 ]
                       \end{matrix}
\right.
\label{map:T}
 \end{align}
 with $S_\epsilon$ as defined in \eqref{eq:Se} and $\Phi$ as in Lemma~\ref{lem:stand}. Moreover, the maps $a_\delta$ from Section~\ref{sec_stand} are viewed as maps on $M$ by extending $a_\delta$ constantly in the $S^{n-k-1}\times S^k$-direction, by identity on $M\setminus D_2$. Then, $a_\delta\in \mathrm{Diff}(M)$.\medskip 
 
 The map $T_\epsilon$ is continuous by the continuity of the involved maps and since  $S_\epsilon(g,1)= (\Phi_{g,0})_*S_\epsilon(g,1)$ and $(\Phi_{g, 1})_*S_\epsilon(g,1) = \left( a_{\delta(g)}\circ a_{\delta(g)}^{-1}\right)_*  (\Phi_{g, 1})_*S_\epsilon(g,1)$.\medskip 
 
 From  Section~\ref{sec:half}, see Corollary~\ref{cor:deltaconstant}, it follows that for $t\in [0,1/3]$ the map $T_\epsilon$ maps into $\RM$. For bigger $t$ this follows since we only pullback by diffeomorphisms. Hence, $\text{Image}(T_\epsilon)\subset \RM$.\medskip 
 
We note that $T_\epsilon(g,1/3)=S_\epsilon(g,1)=(\exp_g^\perp)_*(\xi_{n-k}+g|_S)$  on $U_{S,g}(\epsilon(g))$ and $\mathrm{inj}^\perp T_\epsilon (g, 1/3)\geq 2\epsilon(g)$. With the choice of $\epsilon$ we have $D_{\epsilon(g)}\subset U_{S,g}(\Delta(g))$. Thus, with Lemma~\ref{lem:stand}(iii) we have on $D_{\epsilon(g)}$ that
\begin{align*}T_\epsilon(g,2/3)=&(\Phi_{g,1})_* (\exp_g^\perp)_* (\xi_{n-k}+g|_S)=(\exp_{\Theta(g)}^\perp)_* (\xi_{n-k}+g|_S)\\
=&a_{\delta(g)}'(r)^2 dr^2+a_{\delta(g)}(r)^2\sigma_{n-k-1} +g|_S\overset{\epsilon \leq \delta/8}{=} dr^2 + r^2 \sigma_{n-k} +g|_S.
\end{align*}
Hence,
\begin{align*} D_{\epsilon(g)} &\overset{a_{\delta(g)}^{-1}|_{D_{\epsilon(g)}} =\mathrm{id}}{\to} D_{\epsilon(g)} \overset{a_{\epsilon(g)}}{\to} D_1 \\
dr^2+r^2\sigma_{n-k-1}+g|_S& \overset{ \left(a_{\epsilon(g)}\circ a_{\delta(g)}^{-1}\right)_*}{ \longmapsto} a_{\epsilon(g)}'(r)^2dr^2+a_{\epsilon(g)}(r)^2\sigma_{n-k-1}+g|_S.
\end{align*}
Hence, we have $T_\epsilon (g,1)\in \Rf$ with $\delta(T_\epsilon (g,1))=\epsilon(g)$.\medskip 

In order to see that $T_\epsilon$ gives the desired homotopy $\Rf\cong \RM$, it remains to check that $T_\epsilon (g,t)\in \Rf$ for all $g\in \Rf$ and $t\in [0,1]$: For $t\leq 1/3$ this follows from $\epsilon\leq \delta/8$ and $g=dr^2+r^2\sigma_{n-k-1}+g|_S$ on $r\leq \delta(g)/4$ by definition of $\Rf$.  For $t\in (1/3,2/3]$ this follows from the last statement of Lemma~\ref{lem:stand}. For $t\geq 2/3$ this follows since: By  Lemma~\ref{lem:stand} we have  $T_\epsilon(g,2/3)=a_{\delta(g)}'(r)^2 dr^2+a_{\delta(g)}(r)^2\sigma_{n-k-1} +g|_S$ on $U_{S,g}(\delta(g))\subset U_{S,g}(\Delta(g))$ and, hence, on $D_1\subset a_{(3-3t)\delta(g) + (3t-2)\epsilon(g)}(D_{\delta(g)})$ (which follows from $(3-3t)\delta(g) + (3t-2)\epsilon(g)\leq \delta(g)$).
\begin{align*} T_\epsilon(g, t)=& \left( a_{ (3-3t)\delta(g) + (3t-2)\epsilon(g)} \circ a_{\delta(g)}^{-1}\right)_* (a_{\delta(g)}'(r)^2 dr^2+a_{\delta(g)}(r)^2\sigma_{n-k-1} +g|_S)\\
 =&a_{(3-3t)\delta(g) + (3t-2)\epsilon(g)}'(r)^2 dr^2+a_{(3-3t)\delta(g) + (3t-2)\epsilon(g)}(r)^2\sigma_{n-k-1} +g|_S\qedhere
\end{align*}
\end{proof}

  \section{Embedding \texorpdfstring{$\Rf$}{Rf} into \texorpdfstring{$\Rc$}{Rc}---Proof of Proposition~\ref{prop:existencerho}}
   \label{pr:existencerho}

We define the functions appearing in the Definition of \eqref{eq:Y}, see also  Figure~\ref{fig:fcts}: Let 
 \begin{align}
  F(r)&\define \begin{cases}\frac{1}{r}& 0<r<1/2\\ 1 & 3/4\leq r,\end{cases}\nonumber\\
  f_{\rho}(r)\define f(\rho, r)&\define 
   \begin{cases}                                                                                                                                                                                                                                                                                                  r&  0<r \leq \rho\\
   1 &   2\rho\leq r \end{cases}  \label{eq:defFfeta}\\
 \eta_{\rho}(r)\define \eta(\rho,r)&\define
  \begin{cases}                                                                                                                                                                                                                                                                                  0\quad &  0<r\leq \rho\\
  1\quad &   2\rho\leq r   \end{cases} \nonumber
  \end{align}
(with $ \rho \in (0,\tfrac{1}{4})$, $r\in (0,2]$) be  such that these functions are smooth in $r$ and as families continuous in  $\rho$. Using the diffeomorphism (given by $\exp_{g_0}^\perp$) $D_2\setminus S\cong (0,2]\times S^{n-k-1}\times S^k$, we extend these functions constantly in all other variables and by identity on $M\setminus D_2$ to obtain continuous functions
\begin{align*}
 F\colon   M\setminus S\to \mathbb R,\quad
 f\colon  (0,\tfrac{1}{4})\times M\setminus S\to \mathbb R,\quad
 \eta\colon (0,\tfrac{1}{4})\times M\setminus S\to \mathbb R. 
\end{align*}
The functions $F, f_\rho, \eta_\rho$ are smooth and still continuous in $\rho$ when viewed as functions on $M\setminus S$.\medskip

In order to prove Proposition~\ref{prop:existencerho}, we first see  in Proposition~\ref{prop:existencerho_old} below, that a function $\rho$ exists such that the image of $y_\rho$ has invertible Dirac operator:\medskip 

 \begin{proposition}\label{prop:existencerho_old}
  Let $\delta\colon \Rf\to (0,1)$  be as in Proposition~\ref{prop:Rf}. Then there exists a continuous function $\hat{\rho}\colon \Rf\to (0,1]$ with $0<\hat{\rho}<\delta/32$ such that for all $\rho\leq \hat{\rho}(g)$ and all $g\in \Rf$ the metric \begin{align*} y_{\rho}(g)=\! \left\{\! 
\begin{matrix}\hspace{4cm}           g \hspace{5.3cm} \text{ on } M\!\setminus\!D_1    \\[0.2cm]                                                                                                                                                                                                                                                
F^2\!\left(a_{\delta(g)}'(r)^2dr^2+a_{\delta(g)}(r)^2\sigma_{n-k-1}+
  \ f_{\rho}^2\left(\eta_{\rho}g|_S+(1-\eta_{\rho})\sigma_k\right)\right)\!  \text{ on } D_1\!\setminus\!S\end{matrix}
\right. 
\end{align*}
 has an invertible Dirac operator. In particular $(y_\rho(g), \rho)\in \Rc$.
 \end{proposition}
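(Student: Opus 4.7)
The plan is to run the contradiction-plus-partition-of-unity scheme of Lemma~\ref{lem:existencedelta} and Corollary~\ref{cor:deltaconstant}, with the conformal covariance formula \eqref{conformalchange} as the main new tool. On $M\setminus S$, define the auxiliary metric
\[ \tilde g_{\rho,g} := \begin{cases} g & \text{on } M\setminus D_1,\\ a_{\delta(g)}'(r)^2 dr^2+ a_{\delta(g)}(r)^2\sigma_{n-k-1}+ f_\rho^2(\eta_\rho g|_S+(1-\eta_\rho)\sigma_k) & \text{on } D_1\setminus S,\end{cases}\]
so that $y_\rho(g) = F^2 \tilde g_{\rho,g}$ holds globally on $M\setminus S$. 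The key observation is that on $M\setminus D_{2\rho}$ one actually has $\tilde g_{\rho,g} = g$, so outside a shrinking neighbourhood of $S$ the analysis reduces to the fixed smooth metric $g$ on the closed manifold $M$.

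It suffices, by a partition-of-unity patching identical to the one in Corollary~\ref{cor:deltaconstant} (together with imposing $\hat\rho\leq \delta/32$), to prove the local statement: for every $g_0\in \Rf$ there exist $\rho_0>0$ and an open neighbourhood $U\subset \Rf$ of $g_0$ such that $D^{y_\rho(g)}$ is invertible whenever $g\in U$ and $\rho\in(0,\rho_0]$. Assume the contrary; then there are sequences $g_i\to g_0$ in $\Rf$, $\rho_i\searrow 0$, and $L^2$-harmonic spinors $\varphi_i$ on $(M\setminus S, y_{\rho_i}(g_i))$ with $\|\varphi_i\|_{L^2(y_{\rho_i}(g_i))}=1$. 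By \eqref{conformalchange}, the rescaled sections $\psi_i := F^{-(n-1)/2}\varphi_i$ satisfy $D^{\tilde g_{\rho_i,g_i}}\psi_i = 0$ on $M\setminus S$, and since $F\geq 1$ the conformal volume identity gives
\[\|\psi_i\|_{L^2(\tilde g_{\rho_i,g_i})}^2 = \int_{M\setminus S} F^{-(2n-1)}|\varphi_i|^2\dvol_{y_{\rho_i}(g_i)}\leq 1.\]
Using the identification maps $\beta$ of \eqref{eq:beta} to view the $\psi_i$ as sections of $\Sigma^{g_0}M$, the Schrödinger--Lichnerowicz bootstrap of Lemma~\ref{lem:existencedelta} (combined with the bounds \eqref{spinorisonorm} and the fact that $\tilde g_{\rho_i,g_i}\to g_0$ in $C^\infty_{\mathrm{loc}}(M\setminus S)$) yields uniform $H^1$-bounds on compact subsets of $M\setminus S$; Lemma~\ref{lem:Schauder} then gives $C^2$-bounds on such compacts, and Lemma~\ref{lem:Arzela} extracts a $C^1_{\mathrm{loc}}$-limit $\psi$ satisfying $D^{g_0}\psi = 0$ on $M\setminus S$ with $\|\psi\|_{L^2(M,g_0)}\leq 1$. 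If $\psi\not\equiv 0$, Lemma~\ref{lem:removalsingularityF} extends $D^{g_0}\psi=0$ across $S$ and contradicts $g_0\in \Rf\subset\RM$.

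The main obstacle is showing $\psi\not\equiv 0$, because a priori the $L^2$-mass of $\varphi_i$ could concentrate on the growing cylindrical end $\bigl([-\ln\rho_i,\infty)\times S^{n-k-1}\times S^k,\,du^2+\sigma_{n-k-1}+\sigma_k\bigr)$ of $y_{\rho_i}(g_i)$ and then be annihilated by the weight $F^{-(n-1)/2}$ in the limit. To rule this out, by Subsection~\ref{sec:surg} the Dirac operator on the link $S^{n-k-1}\times S^k$ with the spin structures induced by the surgery has a spectral gap around $0$; Lemma~\ref{lem:spec_cyl}(i) therefore provides an $i$-uniform gap in the essential spectrum of $D^{y_{\rho_i}(g_i)}$, and a standard separation-of-variables on the end forces exponential decay of $\varphi_i$ in the coordinate $u=-\ln r$ at a rate independent of $i$. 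Hence a definite positive fraction of $\|\varphi_i\|^2_{L^2}$ remains on the compact core $M\setminus D_{\sqrt{\rho_i}}$, where $F$ is uniformly bounded, which transfers into a lower bound on $\|\psi\|_{L^2(K,g_0)}$ for a suitable compact $K\subset M\setminus S$ and closes the argument. A subsidiary difficulty is that Lemma~\ref{lem:removalsingularityF} requires $k\neq n-2$, so the codimension-two case $n-k=2$ will need a separate treatment; this is precisely the case in which, again by Subsection~\ref{sec:surg}, the Dirac operator on $S^{n-k-1}=S^1$ with the induced bounding spin structure is itself invertible, which should provide the extra end-decay needed to extend $\psi$ to $M$ by hand.
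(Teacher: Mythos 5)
Your skeleton (reduction to a local statement by a covering argument, contradiction along sequences $g_i\to g_0$, $\rho_i\to 0$, conformal rescaling so that away from $D_{2\rho_i}$ one works with $g_i$ itself, Schauder plus Arzel\'a--Ascoli, and removal of singularities at the end) is the paper's. The genuine gap sits exactly where you place the ``main obstacle'': your mechanism for ruling out total loss of $L^2$-mass does not work. The essential-spectrum gap from Lemma~\ref{lem:spec_cyl}(i) and separation of variables give uniform exponential decay only on the portion of the end where $y_{\rho_i}(g_i)$ is \emph{exactly} the product cylinder, i.e.\ on $\{r\le\rho_i\}$. The dangerous region is the whole neck $\{\rho_i\le r\le\mathrm{const}\}$, where the metric is $\tfrac{dr^2}{r^2}+\sigma_{n-k-1}+\tfrac{1}{r^2}\,g_i|_S$ (in the coordinate $u=-\ln r$ a horn whose $S^k$-factor expands like $e^{u}$); its length tends to infinity, and nothing in your argument prevents the unit mass from concentrating at intermediate scales $r\sim\epsilon_i\to 0$. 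In particular $M\setminus D_{\sqrt{\rho_i}}$ is \emph{not} a fixed compact set, so even granting ``a definite fraction of mass on $M\setminus D_{\sqrt{\rho_i}}$'', the claimed transfer into a lower bound for $\|\psi\|_{L^2(K,g_0)}$ on a fixed compact $K\subset M\setminus S$ is unproved---and that transfer is the heart of the whole proposition.

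The paper closes this by a quantitative estimate valid on the entire neck, not just the cylindrical tip: for $2\rho<u<\delta(g)/16$ the metric $y_\rho(\hat g)$ on $D_{2u}$ is a Riemannian product of the unit round $S^{n-k-1}$ with an $(r,S^k)$-factor, so every spinor supported in $D_{2u}$ has Rayleigh quotient at least $(n-k-1)^2/4\ge 1/4$, see \eqref{Rayleighquotient}; testing with $\chi\psi$ for a cutoff $\chi$ and a harmonic $\psi$ gives the weighted decay estimate \eqref{L2decay}, $\int_{D_u\setminus D_{2\rho}}|F^{\frac{n-1}{2}}\psi|^2\dvol_{\hat g}\le 64\int_{D_{2u}\setminus D_u}|F^{\frac{n-1}{2}}\psi|^2\dvol_{\hat g}$. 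Crucially, the spinors are then normalized by their mass \emph{away} from the neck, $\int_{M\setminus D_u}|F^{\frac{n-1}{2}}\psi_i|^2\dvol_g=1$, rather than by total $L^2(y_i)$-norm; the estimate bounds the mass near $S$ by a fixed multiple of this, so the $C^1_{\mathrm{loc}}$-limit automatically satisfies $1\le\|\varphi\|^2_{L^2(M\setminus S,g)}\le 65$ and is nonzero. Two smaller points: as \eqref{conformalchange} is actually used, the $\tilde g$-harmonic rescaling of a $y_\rho$-harmonic $\varphi$ is $F^{+\frac{n-1}{2}}\varphi$, not $F^{-\frac{n-1}{2}}\varphi$ (your bound $\le 1$ survives, since $\dvol$ scales by $F^{-n}$ and $F\ge 1$, but as defined your $\psi_i$ is not $\tilde g$-harmonic); and no separate codimension-two case is needed---the hypothesis in Lemma~\ref{lem:removalsingularityF} should read $k\le n-2$ (codimension at least $2$, as in the ADH original), and your sketched ``by hand'' extension across $S$ in that case is not an argument.
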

 
 The proof uses the ideas and methods of \cite[Proposition 3.2]{ADH}, except that here we perform no surgery but look at the blown-up manifold $M\setminus S$, and we want a continuous blow-up parameter $\rho$. 
 
\begin{proof}We note first that $(M\setminus S, y_\rho(g))$ is a manifold with cylindrical end on $D_{\min\{\rho, \delta(g)/4\}}$ and, thus, it is complete. In particular, as soon as we no that this manifold has invertible Dirac operator, we have $(y_\rho(g), \rho)\in \Rc$.\medskip  

In order to prove the existence of the function $\hat{\rho}$ it is enough to show, that given any $g\in \Rf$ we can find a number $\rho\in (0,1)$ such that $y_{\rho'}(\hat{g})$ has invertible Dirac operator for all $\rho'\leq \rho$ and all $\hat{g}$ near enough to $g$. Then the proposition follows by a covering argument as in Lemma~\ref{continuitydeformation}.\medskip 

Note that $\rho'\leq \rho\leq \hat{\rho}\leq \delta /32$ and $y_{\rho'}(g)=\frac{dr^2}{r^2}+\sigma_{n-k-1}+\sigma_k$ for $r\leq \rho'$. Hence, by Lemma~\ref{lem:spec_cyl} the Dirac operator on $y_{\rho'}(g)$ has no essential spectrum, and invertibility can only be prevented by the existence of $L^2$-harmonic spinors.\medskip 

The strategy to show the non-existence of harmonic spinors for $\rho$ small enough and  metrics near enough to $g$ is by contradiction, i.e., by proving that for all $\rho_i\to 0$ and  all $g_i\in \Rf\to g$  any sequence of $D^{y_{\rho_i}(g_i)}$-harmonic spinors in $L^2(M\setminus S, y_{\rho_i}(g_i))$  converges (after using appropriate identification maps, see \eqref{eq:beta})   to a $D^g$-harmonic spinor in $L^2(M,g)$: We abbreviate $y_i\define y_{\rho_i}(g_i)$. Let $\psi_{i}$ be a $D^{y_i}$-harmonic spinor in $L^2(M\setminus S, y_i)$. Since $y_i= F^2 g_i$ on $M\setminus D_{2\rho_i}$, by \eqref{conformalchange} the spinor $F^{\frac{n-1}{2}}\psi_i$ is $D^{g_i}$-harmonic on  this set. We prove next that for all $\mu>0$ the sequence $\varphi_i\define \beta_g^{g_i}(F^{\frac{n-1}{2}}\psi_{i})$ converges in $C^1_{\text{loc}}(M\setminus U_{S,g}(\mu), g)$ to some nonvanishing $\varphi\in L^2(M\setminus S,g)$.\medskip

For that we first provide a weighted $L^2$-estimate for any $D^{y_{\rho} (\hat{g})}$-harmonic spinor $\psi\in L^2(M\setminus S, y_{\rho} (\hat{g})$ away from $S$ for some $\rho< \delta(g)/32$  and all $\hat{g}\in \Rf$ with  $\Vert\hat{g}-g\Vert_{C^\infty(M,g_0)}<\epsilon$. Here, the $\epsilon>0$ is chosen such that $\delta(\hat{g})\leq 2\delta(g)$ and $|.|^2_{\hat{g}}\leq 2|.|^2_{g}$ on one-forms.\medskip 

We choose $u\in(2\rho, \delta(g)/16)$. Recall that $a_{\delta(\hat{g})}(r)=r$ for $r\leq \delta(\hat{g})/4$. Hence, together with the choice of $u$ and $2u\leq \frac{\delta(g)}{8}\leq \frac{\delta(\hat{g})}{4}$, the metric $y_\rho(\hat{g})$ on $D_{2u}$ is isometric to 
 $\frac{dr^2}{r^2} + \sigma_{n-k-1} + h_{\rho}(r)$, where for each $r$ the $h_{\rho}(r)$ is  a metric on $S^k$.\medskip 
 
 We define a smooth cut-off function
$$\chi(r)=\begin{cases}
   1,\qquad &\text{on }D_u,\\
   0, &\text{on } M\setminus D_{2u},
  \end{cases}
$$
 such that $|d\chi|_g\leq 2/u$ on $D_{2u}\setminus D_u$. This is possible, since $U_{S,g}(z)=D_z$ for all $z\leq \delta(g)/4$.\medskip
 
   As the square of the Dirac operator for product manifolds $(M_1\times M_2,g_1+ g_2)$ splits, we have that $D^{y_\rho(\hat{g})}$, with domain  restricted to smooth spinors with support in $D_{2u}$ (like $\chi\psi$), has at least the spectral gap of the standard round metric $\sigma_{n-k-1}$, i.e., 
  \begin{equation}\label{Rayleighquotient}\frac{\int_{D_{2u}}|D^{y_\rho(\hat{g})}(\chi\psi)|^2\dvol_{y_\rho(\hat{g})}}{\int_{D_{2u}}|\chi\psi|^2\dvol_{y_\rho(\hat{g})}}\geq \frac{(n-k-1)^2}{4} \geq \frac{1}{4}.\end{equation}
  
With $D^{y_\rho(\hat{g})}\psi=0$ we obtain $|D^{y_\rho(\hat{g})}(\chi\psi)|=|d\chi|_{y_\rho(\hat{g})}|\psi|$. Moreover, since  $y_\rho(\hat{g})=F^2\hat{g}$ on $M\setminus D_{2\rho}$, $\text{supp}\, d\chi=D_{2u}\setminus D_u\subset M\setminus D_{2\rho}$ and $a_{\delta(\hat{g})}(r)=r$ for $r\leq 2u\leq \frac{\delta(g)}{8}\leq \frac{\delta(\hat{g})}{4}$,  we have 
$$|d\chi|^2_{y_\rho(\hat{g})}=a_{\delta(\hat{g})}(r)^2|d\chi|^2_{\hat{g}}\leq r^2 2|d\chi|^2_{g}
\leq\frac{8r^2}{u^2}$$ on $D_{2u}\setminus D_u$. Hence, together with \eqref{conformalchange} and $F(r)=1/r$ for $r\leq 1/2$ we estimate
\begin{align}
  \int_{D_{2u}}|D^{y_\rho(\hat{g})}(\chi\psi)&|^2\dvol_{y_\rho(\hat{g})}  \leq  \frac{8}{u^2}\int_{D_{2u}\setminus D_u}r^{2-n}|\psi|^2\dvol_{\hat{g}}\nonumber \\ 
  = \frac{8}{u^2}\int_{D_{2u}\setminus D_u}&r|F^{\frac{n-1}{2}}\psi|^2\dvol_{\hat{g}} \overset{r\leq 2u}{\leq }
  \frac{16}{u}\int_{D_{2u}\setminus D_u}|F^{\frac{n-1}{2}}\psi|^2\dvol_{\hat{g}}. \label{numeratorRayleigh}
 \end{align} 
 For the denominator of the Rayleigh quotient \eqref{Rayleighquotient} we estimate
\begin{align}
  \int_{D_{2u}}|\chi \psi|^2\dvol_{y_\rho(\hat{g})}&  \geq \int_{D_u\setminus D_{2\rho}}|\psi|^2\dvol_{y_\rho(\hat{g})} \nonumber \\ 
     = \int_{D_u\setminus D_{2\rho}}r^{-1}| &F^{\frac{n-1}{2}}\psi|^2\dvol_{\hat{g}} 
  \geq 
  \frac{1}{u}\int_{D_u\setminus D_{2\rho}}|F^{\frac{n-1}{2}}\psi|^2\dvol_{\hat{g}}.\label{denominatorRayleigh}
 \end{align}
 Inserting \eqref{numeratorRayleigh} and \eqref{denominatorRayleigh} into \eqref{Rayleighquotient}, we obtain
\begin{equation}\label{L2decay}
 \int_{D_u\setminus D_{2\rho}}|F^{\frac{n-1}{2}}\psi|^2\dvol_{\hat{g}}\leq 64 \int_{D_{2u}\setminus D_u}|F^{\frac{n-1}{2}}\psi|^2\dvol_{\hat{g}}.
\end{equation}
In particular, estimate \eqref{L2decay} says that the $L^2(\hat{g})$-norm of the $D^{y_\rho(\hat{g})}$-harmonic spinor $\psi$  tends to avoid the cylindrical end.\medskip

Let now  $\varphi \define  F^\frac{n-1}{2}\psi\in\Gamma(\Sigma^{\hat{g}}(M\setminus D_{2\rho}))$. By \eqref{conformalchange} $D^{\hat{g}}\phi=0$ on $M\setminus D_{2\rho}$. For any choice of $\mu\in (2\rho,u)$ we notice that \eqref{L2decay} implies 
$$\int_{D_u\setminus D_{\mu}}
 |\varphi|^2\dvol_{\hat{g}}\leq 64\int_{M\setminus D_u}|\varphi|^2\dvol_{\hat{g}},$$ 
 and finally 
 \begin{align}\label{L2bounded} \int_{M\setminus D_{\mu}}|\varphi|^2\dvol_{\hat{g}} \leq 
 & (1+64)\int_{M\setminus D_u}|\varphi|^2\dvol_{\hat{g}}.
              \end{align}\medskip

We now return to the sequence $y_i\define y_{\rho_i}(g_i)$  with the $D^{y_i}$-harmonic spinors $\psi_{i}$ from the beginning.  We assume that those spinors are normalized such that  $\int_{M\setminus D_{u}}|F^\frac{n-1}{2}\psi_{i}|^2\dvol_{g_i}=1$ for a fixed $u\in (0, \delta(g)/16)$. 
Then, \eqref{L2bounded} implies that for $\rho_i\to 0$ (then  $u\in (2\rho_i, \delta(g)/16)$ for $i$ large enough) the sequence of $D^{g_i}$-harmonic spinors $\{F^\frac{n-1}{2}\psi_i\}_i$ remains bounded in $L^2(\Sigma^{g_i}(M\setminus D_{\mu}),g_i)$ for all $\mu\in (0,u)$. Since $g_i\to g$, also $\{\beta_g^{g_i}(F^\frac{n-1}{2}\psi_i)\}_i$ remains bounded in $L^2(\Sigma^{g}(M\setminus D_{\mu}),g)$ for all $\mu\in (0,u)$.\medskip 

Then, since $\mathrm{scal}_g$ is bounded the same  arguments as in the last paragraphs of the  proof of  Lemma~\ref{lem:existencedelta} on p.~\pageref{page_argH1} give that $\varphi_{i}$ converges in $C^1_{\text{loc}}(M\setminus D_\mu,g)$ to a $D^{g}$-harmonic spinor $\varphi$ on $M\setminus D_\mu$. By \eqref{L2bounded} it is ${\left\| \varphi\right\|}^2_{L^2(\Sigma^{g}(M\setminus D_\mu),g)}\leq 65$ for all $\mu\in (0,u)$. Letting $\mu\to 0$ we obtain $\phi\in C^1_{\text{loc}}(M\setminus S,g)$ with 
$$1={\left\| \varphi\right\|}^2_{L^2(\Sigma^{g}(M\setminus D_u),g)}\leq{\left\| \varphi\right\|}^2_{L^2(\Sigma^{g}(M\setminus S),g)}\leq 65.$$  By 
Lemma~\ref{lem:removalsingularityF}  $\varphi$ is then a  strong harmonic spinor on all of $(M,g)$ which gives the contradiction. 
\end{proof}

\begin{proof}[Proof of Proposition~\ref{prop:existencerho}]
 In Proposition~\ref{prop:existencerho_old} we already obtained a continuous function $\hat{\rho}\colon \Rf\to (0,\tfrac{1}{2})$ with $\hat{\rho}\leq \delta/32$ such that $\Upsilon_\rho(\Rf)\subset \Rc$  is fulfilled for all $\rho\leq \hat{\rho}$.\medskip
 
At first we prove that $\Upsilon_\rho$ is a homeomorphism onto its image:\smallskip 

Let $(h,s)\in \Upsilon_\rho(\Rf)$. The definition of $\Upsilon_\rho$ implies that  $h|_{r=1}$ has the form
  \[h|_{r=1}= \hat{\delta}(h)^2\sigma_{n-k-1}+\text{res}(h).\]
 From this form we can directly read of continuous maps $\text{res}\colon\! y_\rho(\Rf) \to \text{Riem}(S)$ and $\hat{\delta}\colon y_\rho(\Rf)\! \to (0,1]$. Note that $\hat{\delta}(y_{\rho(g)}(g))=a_{\delta(g)}(1)=\delta(g)$. With these functions we define the map 
 \begin{align*}
 \kappa\colon \Upsilon_\rho(\Rf)\to& \Riem\\
  (h,s)\mapsto& \left\{ \begin{matrix}
                        h & \text{on }M\setminus D_1\\
                        a_{\hat{\delta}(h)}'(r)^2dr^2 + a_{\hat{\delta}(h)}(r)^2\sigma_{n-k-1}+\text{res}(h) & \text{on } D_1.
                       \end{matrix}
 \right.
 \end{align*}
Note that $h=y_{\rho(g)}(g)$ for some $g\in \Rf$ and hence $\kappa(h,s)|_{D_1}=g|_{D_1}$. Since by construction $g|_{M\setminus D_{3/4}}=y_{\rho(g)}(g)|_{M\setminus D_{3/4}}$, $\kappa(y_{\rho(g)}(g),s)=g$ really gives a smooth metric on $M$. Hence, $\kappa$  is the left-inverse to $\Upsilon_\rho$ and maps onto $\Rf$.\medskip
 
 Continuity of $\Upsilon_\rho$ directly follows from the continuity of $\rho, \delta$ and the definitions of $F$,$f_\rho$, $\eta_\rho$ in \eqref{eq:defFfeta} and $a_\delta$ in Section~\ref{sec_stand}. The continuity of $\kappa$ follows by the continuity of $\hat{\delta}$ and $\text{res}$. This implies (i).\medskip
 
  It remains to see that the image of $\Upsilon_\rho$ is closed in $\Rc$: We recall that $\Upsilon_\rho(\Rf)$ is really a subset of $\Rc$ by Proposition~\ref{prop:existencerho_old}.
 Let $\Upsilon_\rho(g_i)=(y_{\rho(g_i)}(g_i),\rho(g_i))\in \Upsilon_\rho(\Rf)$ converge to $(g,s)\in \Rc$. Then, $\rho(g_i)\to s>0$. Moreover, the radius $\hat{\delta}(y_{\rho(g_i)}(g_i))=\delta(g_i)$ of the $S^{n-k-1}$-factor at $r=1$ needs to converge to some $\delta\in (0,1]$ and $g_i|_S=\text{res}(y_{\rho(g_i)}(g_i))$ converges to some $\tilde{h}\in \text{Riem}(S^k)$. Hence, $g_i\to \check{g}$ with $\check{g}=g$ on $M\setminus D_1$ and $\check{g}=a_\delta'(r)^2dr^2+a_\delta(r)^2\sigma_{n-k-1}+\tilde{h}$ on $D_1$. This implies $\check{g}\in \Rf$. By continuity of $\rho$ this implies $s=\rho(\check{g})$ and, hence,  $\Upsilon(\check{g})=(g,s)$ the claim follows.
\end{proof}

\section{The grafting}\label{sec:cup}

First we prove the result on the gluing of cylindrical manifolds:

\begin{proof}[Proof of Lemma~\ref{lem:L}]\label{pr:L}
 Let $(g,s)\in \Rc$. We show that there is an $L>0$ and an open neighbourhood $U_{(g,s)}\subset \Rc$ of $(g,s)$ such that for all $(\hat{g},\hat{s})\in U_{(g,s)}$, $\hat{L}\geq L$ and $h\in \text{R}_{c(\hat{g}, \hat{s})}(N)$ the Dirac operator to the glued metric has invertible Dirac operator. The rest then again follows by a covering argument as in Lemma~\ref{continuitydeformation}.\medskip
 
 We  prove this claim by contradiction: Assume that there are sequences $L_i\in \mathbb R_+$ with $L_i\to \infty$, $(g_i,s_i)\in \Rc$ with $(g_i,s_i)\to (g,s)$ and $h_i\in \text{R}_{c(g_i, s_i)}$  such that the Dirac operator to the glued metric is not invertible. Let $(Z_i,G_i)$ denote the glued manifold.\medskip

 By construction of $\Rc$, for $i$ large enough $g_i=g=\frac{dr^2}{r^2}+\sigma_{n-k-1}+\sigma_k$ for $r<\frac{s}{2}$.\medskip 
 
 We note that any element in the essential spectrum of $D^{G_i}$ needs to come from one of the ends of $N$ that was not glued to $M$ and, thus, from the essential spectrum of $D^{h_i}$. Hence, the zero in the spectrum of $D^{G_i}$, if existent, is an eigenvalue.\medskip
 
 Let $\phi_i$ be a $D^{G_i}$-harmonic spinor with $\Vert \phi_i\Vert_{L^2(Z_i,G_i)}=1$. For $L_i> 2j$ let $\chi_{i,j}\colon Z_i\to [0,1]$ be smooth functions such that 
 $\chi_{i,j}=1$ on $M\setminus \{r<s_i\}\cup [0,j]\times S^{n-k-1}\times S^k  \subset Z_i$, $\chi_{i,j}=0$ on $[2j, L_i]\times S^{n-k-1}\times S^k\cup N\setminus \{\hat{r}<2\}  \subset Z_i$ and $|d\chi_{i,j}|_{G_i}\leq 2/j$.  Since $G_i=g_i$ on $\{\chi_{i,j}\neq 0\}$ and $G_i=h_i$ on $\{\chi_{i,j}\neq 1\}$, we obtain
 \begin{align}\label{eq:D1}
  \frac{\Vert D^{g_i}(\chi_{i,j} \phi_i)\Vert_{L^2(g_i)}}{\Vert \chi_{i,j}\phi_i\Vert_{L^2(g_i)}}=  \frac{\Vert d\chi_{i,j}\cdot \phi_i\Vert_{L^2(G_i)}}{\Vert \chi_{i,j}\phi_i\Vert_{L^2(g_i)}}\leq& \frac{2}{j\Vert \chi_{i,j}\phi_i\Vert_{L^2(g_i)}}\quad \text{ and }\\ 
  \label{eq:D2} 
      \frac{\Vert D^{h_i}((1-\chi_{i,j}) \phi_i)\Vert_{L^2(h_i)}}{\Vert (1-\chi_{i,j})\phi_i\Vert_{L^2(h_i)}}=
    \frac{\Vert d\chi_{i,j} \phi_i\Vert_{L^2(G_i)}}{\Vert (1-\chi_{i,j})\phi_i\Vert_{L^2(h_i)}}\leq& \frac{2}{j\Vert (1-\chi_{i,j})\phi_i\Vert_{L^2(h_i)}}.
 \end{align}
 Moreover, $1=\Vert \phi_i\Vert_{L^2(G_i)}\leq \Vert \chi_{i,j}\phi_i\Vert_{L^2(g_i)}+\Vert (1-\chi_{i,j})\phi_i\Vert_{L^2(h_i)}$  which implies that one of the quantities $\limsup_{i\to \infty} \Vert \chi_{i,j}\phi_i\Vert_{L^2(g_i)}$ and $\limsup_{i\to \infty} \Vert (1-\chi_{i,j})\phi_i\Vert_{L^2(h_i)}$ has to be $\geq 1/2$.\medskip
 
 Let $a>0$ be the spectral gap of $D^g$.  For $i$ large enough the infimum of the spectrum of $(D^{g_i})^2$ is bigger than $a^2/4$ by Lemma~\ref{lem:spec_cyl}(ii), and $c(g_i,s_i)\geq c(g,s)/2$. Let $j\geq 8/\min \{ a, c(g,s)\}$. Then, one of the right sides of \eqref{eq:D1} and \eqref{eq:D2} is smaller than $\min\{ a/2, c(g,s)/2\}$. That means, that the spectral gap of $D^{g_i}$ is smaller than $a/2$ or the one of $D^{h_i}$ is smaller than $c(g_i,s_i)$ which gives the contradiction.
\end{proof}

At the end we want to glue (in the sense of the last lemma) every $(g,s)\in \Rc$ with the following 'blown-up torpedo metric': We define on $T_{n-k}\setminus (\{0\}\times S^k)= (\mathbb R^{n-k}\setminus \{0\})\times S^k$ the metric 
 \begin{align*}
 y_{\tilde{\rho}}(h_{torp}^{n,k})\define F(\tilde{r})^2(h_{n-k} + f_{\tilde{\rho}}(\tilde{r})^2\sigma_k),
\end{align*}

with $h_{torp}^{n,k}$ as in Subsection~\ref{sec:torp}. As a map $y_{\tilde{\rho}}$ was originally defined in Proposition~\ref{prop:existencerho_old} but for metrics in $\Rf$. But since this metric behaves very similar we call it $y_{\tilde{\rho}}(h_{torp}^{n,k})$ nevertheless.    Note that $y_{\tilde{\rho}}(h_{torp}^{n,k})$ blows up the 'origin' of the torpedo while leaving the original cylindrical end untouched, i.e. we obtain a manifold diffeomorphic to $(0,\infty) \times S^{n-k-1}\times S^k$ with two cylindrical ends -- the original one on $\tilde{r}\geq 2$ with metric $d\tilde{r}^2+\sigma_{n-k-1}+\sigma_k$ and the new one  on $\tilde{r}\leq {\tilde{\rho}}$ with metric $\frac{d\tilde{r}^2}{\tilde{r}^2}+\sigma_{n-k-1}+\sigma_k$. In particular, zero cannot be in the essential spectrum of the correponding Dirac operator. Moreover,
\begin{align*} y_{\tilde{\rho}}(h_{torp}^{n,k})=& F(\tilde{r})^2(d\tilde{r}^2+ w(\tilde{r})^2\sigma_{n-k-1}+f_{\tilde{\rho}}(\tilde{r})^2\sigma_k)\\
 =& F(\tilde{r})^2w(\tilde{r})^2(\sigma_{n-k-1}+ w(\tilde{r})^{-2}(d\tilde{r}^2+f_{\tilde{\rho}}(\tilde{r})^2\sigma_k))\\
 =& F(\tilde{r})^2w(\tilde{r})^2(\sigma_{n-k-1}+ \text{complete metric on} (0,\infty)\times S^k)
\end{align*}

The conformal factor equals $1$ near $\tilde{r}=0$ and near infinity. Thus, by \eqref{conformalchange} and invertibility of the Dirac operator on $\sigma_{n-k-1}$, which implies the invertibility of the product metric $F^{-2}w^{-2} y_{\tilde{\rho}}(h_{torp}^{n,k})$, we see that $y_{\tilde{\rho}}(h_{torp}^{n,k})$ has invertible Dirac operator for all $\tilde{\rho}\in (0,1)$.\medskip 

In the following, we choose $\rho\define \hat{\rho}\colon \Rf\to (0,1)$ for the $\hat{\rho}$ from Proposition~\ref{prop:existencerho}.
\medskip 

For Proposition~\ref{prop:cup} we want to continuously deform $(g,s)\in \Rc$ into an element in $\Upsilon_\rho(\Rf)$ for $\rho$ to be chosen. The endpoint of this deformation will be a gluing of $(g,s)$ with $y_\rho(h_{torp}^{n,k})$ in the sense of last lemma. That means in particular, that we need to identify this glued metric as the image of $y_\rho$ of an element in $\Rf$, see (b) and (d) below. For that we need not only do the abstract gluing but to specify the diffeomorphism of the glued together manifold $Z$ with $M$. The resulting metric on $M$ will be called $g_{\mathrm{tor}}$ later on, see \eqref{eq_gtor} (we drop the dependence on $s$ in the notation). 

For the deformation in between we glue $(g,s)$ together with an interpolation of the standard cylinder with $y_\rho(h_{torp}^{n,k})$ in a continuous way. For that we in particular need that this interpolation always has invertible Dirac operator, which will done by a combination of an interpolation on the cylindrical part, see (a) below, the last Lemma and diffeomorphisms of $M$, see (c), that ensure that the end of the deformation lands in $\Upsilon_\rho(\Rf)$.\medskip

\paragraph{\textbf{(a)}} Let $u\colon (0,\infty)\to \mathbb R$ be a smooth monotonically increasing function with $u(\tilde{r})=\tilde{r}$ for $\tilde{r}\geq 2$ and $u(\tilde{r})=\ln \tilde{r}$ for $\tilde{r}\leq 1$. Changing the coordinate $\tilde{r}$ into $u$ we obtain  for the interpolation of $y_\rho(h_{torp}^{n,k})$  with the standard metric on $\mathbb R\times S^{n-k-1}\times S^k$:

\begin{lemma}\label{lem:Lint} There is a $\zeta>0$
such that  
 \[ G_{\rho,t}\define (1-t)\left(du^2 + \sigma_{n-k-1} +\sigma_k\right)+t y_\rho(h_{torp}^{n,k})\in R_{\zeta}(\mathbb R \times S^{n-k-1}\times S^k)\] 
 for all $t\in [0,1]$ and $\rho\in (0,1)$.
\end{lemma}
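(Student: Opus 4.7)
The plan is a compactness argument in the spirit of Lemma~\ref{lem:L} and Proposition~\ref{prop:existencerho_old}. I need three ingredients: (a) verify the cylindrical-end form condition of Definition~\ref{def:cyl}; (b) a uniform-in-$t$ bound on the essential spectrum of $D^{G_{\rho,t}}$; (c) rule out the eigenvalue zero for each $t\in[0,1]$, which together with continuity and compactness of $[0,1]$ yields the uniform lower bound $\zeta(\rho)>0$.

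For (a) and (b): on the end $\tilde r\geq 2$ we have $u=\tilde r$, $F=f_\rho=1$, $w(\tilde r)=1$, so $y_\rho(h_{torp}^{n,k})=du^2+\sigma_{n-k-1}+\sigma_k$. On the opposite end $\tilde r\leq \rho$, $F=1/\tilde r$, $f_\rho=\tilde r$, $w(\tilde r)=\tilde r$, and the relation $du=d\tilde r/\tilde r$ again gives $y_\rho(h_{torp}^{n,k})=du^2+\sigma_{n-k-1}+\sigma_k$. Hence every $G_{\rho,t}$ agrees with the fixed standard cylindrical metric outside the compact set $K_\rho:=[\ln\rho,2]\times S^{n-k-1}\times S^k$, in particular is complete with the required cylindrical end; and by Lemma~\ref{lem:spec_cyl}(i), $\inf\,\text{spec}_{\text{ess}}(D^{G_{\rho,t}})^2\geq (n-k-1)^2/4\geq 1/4$ uniformly in $t$ (using $n-k\geq 2$).

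For (c), by Lemma~\ref{lem:spec_cyl}(ii) the map $t\mapsto \mu(t):=\inf\,\text{spec}(D^{G_{\rho,t}})^2$ is continuous on $[0,1]$. One has $\mu(0)\geq 1/4$ by separation of variables on the standard cylinder ($D^2=-\partial_u^2+(D^{\text{link}})^2$ has trivial $L^2$ kernel), and $\mu(1)>0$ by the defining choice of $\rho_0$. Setting $\zeta(\rho)^2:=\min_{t\in[0,1]}\mu(t)$, suppose for contradiction that $\mu(t_*)=0$ for some $t_*\in[0,1]$; by the essential-spectrum bound this is an eigenvalue with normalized $L^2$ eigenspinor $\phi_*$ of $D^{G_{\rho,t_*}}$, and the cases $t_*\in\{0,1\}$ directly contradict $\mu(0),\mu(1)>0$.

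The main obstacle is the interior case $t_*\in(0,1)$. To handle it I exploit the observation that on $\tilde r\leq 1/2$ the metric reduces to the Riemannian product form
\[ G_{\rho,t}|_{\tilde r\leq 1/2}=du^2+\sigma_{n-k-1}+\Phi_{\rho,t}(\tilde r)^2\sigma_k,\qquad \Phi_{\rho,t}^2=(1-t)+t(f_\rho/\tilde r)^2, \]
which is the product of $(S^{n-k-1},\sigma_{n-k-1})$ with the warped cylinder $(\mathbb R\times S^k,du^2+\Phi^2\sigma_k)$. Consequently $(D^{G_{\rho,t}})^2\geq (D^{S^{n-k-1}})^2\otimes 1\geq 1/4$ on $\tilde r\leq 1/2$, and the cutoff/Rayleigh-quotient argument of \eqref{Rayleighquotient}--\eqref{denominatorRayleigh} forces exponential decay of $\phi_*$ as $\tilde r\to 0$; the analogous cylindrical-end estimate gives decay as $\tilde r\to\infty$. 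To close the contradiction I would follow the compactness machinery of Proposition~\ref{prop:existencerho_old}: along a sequence $\rho_n\to 0$ (shrinking $\rho_0$ if necessary) with would-be harmonic spinors on $G_{\rho_n,t_n}$, use Schauder (Lemma~\ref{lem:Schauder}), Arz\'ela--Ascoli (Lemma~\ref{lem:Arzela}), the conformal covariance \eqref{conformalchange}, and the spinor-bundle identification \eqref{eq:beta} to pass to a $C^1_{\text{loc}}$-limit which, combined with the two decay estimates, transfers to a nontrivial $L^2$ harmonic spinor either on the standard cylinder or on the torpedo $(T_{n,k},h_{torp}^{n,k})$ --- both of whose Dirac operators are invertible. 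This transfer via conformal change and the uniform decay is the main technical difficulty of the proof.
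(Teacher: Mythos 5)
Your reduction---cylindrical ends plus essential-spectrum gap, continuity of the spectral bound in $t$ via Lemma~\ref{lem:spec_cyl}(ii), and then ruling out a zero eigenvalue for each $t$---is the right skeleton, and the endpoint cases $t=0,1$ are fine. The gap is precisely the interior case $t_*\in(0,1)$, which is the whole content of the lemma, and your proposed compactness argument does not close it. First, the lemma is a statement for each fixed $\rho\in(0,\rho_0)$, so a contradiction argument along $\rho_n\to 0$ only yields the statement after shrinking $\rho_0$ (tolerable for the rest of the paper, but not what is claimed). More seriously, along a sequence with $t_n\to t_*\in(0,1)$ the local limit of $G_{\rho_n,t_n}$ is neither the standard cylinder nor the torpedo: it is the interpolated metric $(1-t_*)(du^2+\sigma_{n-k-1}+\sigma_k)+t_*F^2(d\tilde r^2+w(\tilde r)^2\sigma_{n-k-1}+\sigma_k)$, i.e.\ $G_{0,t_*}$ with $f_\rho$ replaced by $1$, whose invertibility is exactly the type of statement under proof---so the ``transfer to the cylinder or the torpedo'' step is circular, and nothing you establish controls this limit (its $S^k$-factor even degenerates as $\tilde r\to 0$). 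The two decay estimates at the ends do not by themselves produce a contradiction for a fixed interior $t_*$ either.

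The paper's proof is a short direct argument which you almost had: you noted the genuine product form $du^2+\sigma_{n-k-1}+\Phi_{\rho,t}^2\sigma_k$ on $\tilde r\leq 1/2$, but the same structure persists \emph{globally} up to a bounded conformal factor. Since every coefficient of $G_{\rho,t}$ is a function of $u$ alone, one may factor out the coefficient $Q(u,t)$ of $\sigma_{n-k-1}$ and write $G_{\rho,t}=Q(u,t)\left(\sigma_{n-k-1}+(\text{a metric on }\mathbb R\times S^k)\right)$, where $Q\equiv 1$ outside a compact set. For the product metric the Dirac operator squared splits and is bounded below by $(n-k-1)^2/4$, so it has no $L^2$ harmonic spinors; by the conformal covariance \eqref{conformalchange}, with a conformal factor bounded above and below this absence is inherited by $G_{\rho,t}$, and together with the essential-spectrum gap from the cylindrical ends this gives invertibility, with gap $\zeta(\rho,t)>0$, simultaneously for all $t\in[0,1]$ and all $\rho\in(0,\rho_0)$. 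Continuity in $t$ (Lemma~\ref{lem:spec_cyl}(ii)) and compactness of $[0,1]$ then give $\zeta(\rho):=\inf_{t}\zeta(\rho,t)>0$, exactly as in your step (c). So: genuine gap at the interior-$t$ step; the fix is the global conformal-product observation, not a blow-up/compactness argument.
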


\begin{proof}
On $u\in \mathbb R \setminus (\ln \rho, 2)$ the resulting metric is 
$du^2+\sigma_{n-k-1}+\sigma_k$.  In general, we have 
  \begin{align*} G_{\rho,t} =&(1-t)(du^2 + \sigma_{n-k-1} +\sigma_k)+t y_\rho(h_{torp}^{n,k}) \\
   =&((1-t)+tF(\tilde{r}(u))^2\tilde{r}'(u)^2)du^2 \\
   &+\underbrace{(1-t+tF(\tilde{r}(u))^2w(\tilde{r}(u))^2)}_{=:Q(u,t)>0} \sigma_{n-k-1}+(1-t+tF(\tilde{r}(u))^2f_\rho(\tilde{r}(u))^2)\sigma_k\\
   = &Q(u,t) \left(\sigma_{n-k-1} + (\text{complete metric on }\mathbb R\times S^k)\right).
  \end{align*}
Thus, the metric $G_{\rho,t}$ is conformal to a product metric on $S^{n-k-1}\times (\mathbb R\times S^k)$ where $S^{n-k-1}$ is equipped with $\sigma_{n-k-1}$ and the conformal factor is equal to $1$ outside a compact subset. Hence, the Dirac operator to $G_{\rho,t}$ is invertible for all $t\in [0,1]$ and $\rho\in (0,1)$. In particular implies the spectral gap of $\sigma_{n-k-1}$ and \eqref{conformalchange} that $\inf \mathrm{spec} (D^{G_{\rho,t}})^2\geq \frac{\inf_{u\in \mathbb R} (Q(u,t))^{-1}}{\sup_{u\in \mathbb R} Q(u,t)}\frac{(n-k-1)^2}{4}=:\zeta (t)$.\medskip 

We set $\zeta\define \inf_{t\in [0,1]} \zeta(t)$. Since the function $\zeta(t)$ depends continuously on $t$, $\zeta$ is a positive number with $G_{\rho,t}\in \text{R}_{\zeta}(\mathbb R\times S^{n-k-1}\times S^k)$.
\end{proof}

\paragraph{\textbf{(b)}} We choose $L$ to be the function obtained in Lemma~\ref{lem:L} for $(N,h)=(T_{n,k}, h_{\text{torp}}^{n,k})$ with $c(g,s)=\frac{(n-k-1)^2}{4}$.  Then for $(g,s)\in \Rc$ we have by Lemma~\ref{lem:L} that
$Z((g,s), h_{\text{torp}}^{n,k}, L(g,s))$  has invertible Dirac operator. Note that the glued manifold $Z$ is diffeomorphic to $M$. We will soon  view $Z((g,s), h_{\text{torp}}^{n-k}, L(g,s))$ as a metric on $M$ and even an element in $\Rf$, see (d) below.
\medskip 

\paragraph{\textbf{(c)}} Let $\Psi\colon (0,1]\times [0,2)\to [0,2)$ be a smooth function
such that $\Psi_u\define \Psi(u,.)$ is monotonically increasing, $\Psi_u(r)=r$ for $r\in (2-u,2)$, $\Psi_u(r)=r/u$ for $r\leq u$ and $\Psi_1=\mathrm{id}$. Extending $\Psi_u$ constant in $S^{n-k-1}\times S^k$-direction and by identity on $M\setminus D_2$ this gives a continuous one parameter family $\hat{\Psi}_u \in \mathrm{Diff}(M)$. Note that \begin{align}\label{eq_cyl_diff} (\hat{\Psi}_u)_*\left(\left(\frac{dr^2}{r^2}+\sigma_{n-k-1}+\sigma_k\right)|_{D_u\setminus S}\right)=\left(\frac{dr^2}{r^2}+\sigma_{n-k-1}+\sigma_k\right)|_{D_1\setminus S}.\end{align}

\paragraph{\textbf{(d)}} We view  $Z((g,s), h_{\text{torp}}^{n-k}, L(g,s))$ as  a metric on $M$ in the follows way: 
It equals $(\Psi_{se^{-L(g,s)-1}})_*g$ on $M\setminus D_{\tilde{v}}$ for $\tilde{v}=\Psi_{se^{-L(g,s)-1}}(se^{-L(g,s)})$, i.e., $M\setminus D_{\tilde{v}}$ contains the  $M\setminus \{r\leq s\}\cup [0,L(g,s)]\times S^{n-k-1}\times S^k$ of $Z$. On  $D_{\tilde{v}}$ the metric $Z((g,s), h_{\text{torp}}^{n-k}, L(g,s))$ thus should just give the $T_{n,k}\setminus \{\tilde{r} \geq 2\}$ part of $Z$ and should be equal to $(\Psi_{se^{-L(g,s)-1}})_*g$ on $D_{\tilde{v}}\setminus D_v$ for $v=\Psi_{se^{-L(g,s)-1}}(s)$ (the $L(g,s)$-long cylindrical part): For that let $\tilde{\Psi}\colon \Rc\times [0,\infty)\to [0,\infty)$ be  a continuous function, such that $\tilde{\Psi}_{(g,s)}\define \tilde{\Psi} (( g,s),.)$ is a smooth monotonically increasing function with 
\[ \left\{ \begin{matrix}
          \tilde{\Psi}_{(g,s)}(\tilde{r})=\tilde{r}  \hspace{5cm}\text{for } \tilde{r}\in (0,1)\hspace{1.4cm}\\
            \tilde{\Psi}_{(g,s)}( \tilde{r})= \hat{\Psi}_{se^{-L(g,s)-1}}(se^{2-\tilde{r}}) \hspace{2.4cm} \text{for }\tilde{r}\in (2, 2+L(g,s)).
           \end{matrix}
\right. \]
We view $\tilde{\Psi}_{(g,s)}$ as diffeomorphism on $T_{n,k}$ by extending it constantly perpendicular to the radial direction. Note that by construction $\Psi_{(g,s)}$ depends continuously on $(g,s)\in \Rc$. The choice of $\tilde{\Psi}$ is such that
\begin{align} 
\nonumber (\tilde{\Psi}_{(g,s)})_*h_{\text{torp}}^{n-k}&=h_{\text{torp}}^{n-k} \text{ on }\tilde{r}\leq 1\\
\nonumber (\tilde{\Psi}_{(g,s)})_*(h_{\text{torp}}^{n-k}|_{\{2\leq  \tilde{r} \leq 2+L(g,s)\}})&=  (\hat{\Psi}_{se^{-L(g,s)-1}})_*(\frac{dr^2}{r^2}+\sigma_{n+k+1}+\sigma_k)\label{eq_equiv_cy}\\&=(\hat{\Psi}_{se^{-L(g,s)-1}})_*(g)\\ &\quad \nonumber\text{ on }
\{\tilde{v}\leq r=\tilde{\Psi}_{(g,s)}(\tilde{r})\leq v\}.\end{align}
\medskip

Altogether this gives a continuous map 
\begin{align}\label{eq_gtor}\Rc\mapsto \Rf,\, (g,s)\!\mapsto\! g_{\mathrm{tor}}\define \left\{ \begin{matrix}
(\Psi_{se^{-L(g,s)-1}})_*g &\text{on } M\!\setminus\! D_{\tilde{v}}\\
                                         (\tilde{\Psi}_{(g,s)})_* h_{\text{torp}}^{n-k} & \text{on } D_{\tilde{v}}.
                                        \end{matrix}
 \right.\end{align}
By \eqref{eq_equiv_cy} the image is a smooth metric. Note that $g_{\mathrm{tor}}=g$ on $M\setminus D_2$,  the set $D_2\setminus D_1$ contains (among other parts ) the $L(g,s)$-long gluing cylinder and on $D_1$ the metric equals $dr^2+r^2\sigma_{n-k-1}+\sigma_k$.\medskip

Moreover, for $0<v< \tilde{v} <2$ let $\kappa_{v, \tilde{v}}\colon M\to M$ be smooth with $\kappa_{v, \tilde{v}}=1$ for $r\leq \tilde{v}$ and $\kappa_{v, \tilde{v}}\equiv 0$ on $M\setminus \{r\leq v\}$ and such that the map depends continuously on $v$, $\tilde{v}$.\medskip 

\begin{figure}
\centering

\begin{tikzpicture}[scale=0.54]

\draw (19,1.5) node { $t=0$};
\draw[line width=1] (0,0) .. controls (-2,1) and (-2,2) .. (0,3) .. controls (2,4) and (3,4) .. (4,3.7) .. controls (5, 3.6) and (5.5, 4) .. (6, 4)  ..     controls (9.2,4.3) and (9.4,3.1) .. (9.5,3) .. controls (10.5, 1.5) and (10.8, 2) .. (11,2) -- (18,2); 
\draw[line width=1] (18,1) -- (11,1) .. controls (10.8,1) and (10.5, 1.5) .. (9.5,0) .. controls (9.4, 0) and (9.2, -1.3) .. (6,-1) .. controls (5.5,-1) and (5,-0.6) .. (4, -0.7) .. controls (3,-1) and (2,-1) .. (0,0);
\draw[line width=1] (0,2) .. controls (1,1.5) .. (2,2);
\draw[line width=1] (0.4,1.8) .. controls (1,2) .. (1.6,1.8);
\begin{scope}[shift={(2,-1)}]
\draw[line width=1] (0,2) .. controls (1,1.5) .. (2,2);
\draw[line width=1] (0.4,1.8) .. controls (1,2) .. (1.6,1.8);
\end{scope}

\draw[dashed] (6.5,5.3) node[above] {\small $r=2$ } -- (6.5,-7.5);
\draw[dashed] (9.6,4.5) node[above] {\small  $r=1$ } -- (9.6,-7.5);

\draw[dashed] (11,5.3) node[above] {\small  $r=s$ } -- (11,-3.5);
\draw[color=blue, -> ]  (11,0.8) .. controls  (11,-2) and (7.9,-1) .. (7.9,-4); 
\draw[dashed] (7.9,-4) -- (7.9, -7.6) node[below, xshift=-3] {\small $r=v$};

\draw[dashed] (13.5,4.5) node[above, xshift=-5, yshift=1] {\small  $r=se^{-L(g,s)}$ } -- (13.5,-3.5);

\draw[color=blue, -> ]  (13.5,0.8) .. controls  (13.5,-2) and (8.5,-1) .. (8.5,-4); 
\draw[dashed] (8.5,-4) -- (8.5, -8.4) node[below, yshift=2, xshift=2] {\small $r=\tilde{v}$};
\draw[dashed] (8.5,-8.9) -- (8.5, -12.5);
\draw[dashed] (12,-9.2)  node[above, xshift=4] {\small $r=\rho(g_{{tor}})$} -- (12, -12.5);

\draw[dashed] (15,5) node[xshift=27pt, yshift=14] {\small  $r=se^{-L(g,s)-1}$ } -- (15,-3.5);

\draw[color=blue, -> ]  (15,0.8) .. controls  (15,-2) and (9.6,-1) .. (9.6,-4);

\draw [decorate,decoration={brace,amplitude=4pt},xshift=-4pt,yshift=0pt]
 (11.3,2.5) -- (13.5,2.5) node [midway,yshift=13pt] {\footnotesize 
$L(g,s)$};

\draw[->,color=blue]  (18.3,1) .. controls (19, -1.5) ..  (18.3,-4);
\draw[color=blue] (17,-1.7) node  {\small diffeotopy};
\draw[color=blue] (17,-2.4) node  {\small $\Psi_{se^{-L(g,s)-1}}$};

\draw[color=blue] (3,-1.7) node {\small identity on $M\setminus D_2$};

\begin{scope}[shift={(0,-6.5)}]
\draw (19.26,1.5) node { $t\in [\frac{1}{4}, \frac{1}{2}]$};
\draw[line width=1] (0,0) .. controls (-2,1) and (-2,2) .. (0,3) .. controls (2,4) and (3,4) .. (4,3.7) .. controls (5, 3.6) and (5.5, 4) .. (6, 4)  ..     controls (6.8,4.3) and (6.9,3.1) .. (7,2.7) .. controls (7.2, 1.5) and (7.5, 2) .. (8,2) -- (18,2); 
\draw[line width=1] (18,1) -- (8,1) .. controls (7.5,1) and (7.2, 1.5) .. (7,0.3) .. controls (6.9, 0) and (6.8, -1.3) .. (6,-1) .. controls (5.5,-1) and (5,-0.6) .. (4, -0.7) .. controls (3,-1) and (2,-1) .. (0,0);
\draw[line width=1] (0,2) .. controls (1,1.5) .. (2,2);
\draw[line width=1] (0.4,1.8) .. controls (1,2) .. (1.6,1.8);
\begin{scope}[shift={(2,-1)}]
\draw[line width=1] (0,2) .. controls (1,1.5) .. (2,2);
\draw[line width=1] (0.4,1.8) .. controls (1,2) .. (1.6,1.8);
\end{scope}

\draw[->,color=blue]  (18.3,1) .. controls (19, -1.5) ..  (18.3,-4);
\draw[color=blue] (16.3,-1.4) node  {\small interpolation};
\draw[color=blue] (16.3,-2) node  {\small between $t=\frac{1}{2}$};
\draw[color=blue] (16.3,-2.6) node  {\small and $y_{\rho(g_{\mathrm{tor}})}({g}_{\mathrm{tor}})$};

\end{scope}

\begin{scope}[shift={(0,-13)}]
\draw (19,1.5) node { $t=1$};
\draw[line width=1] (0,0) .. controls (-2,1) and (-2,2) .. (0,3) .. controls (2,4) and (3,4) .. (4,3.7) .. controls (5, 3.6) and (5.5, 4) .. (6, 4)  ..     controls (6.8,4.3) and (6.9,3.1) .. (7,2.7) .. controls (7.2, 1.5) and (7.5, 2) .. (8,2) -- (9,2) .. controls (9.5,2) and(10,1.7) .. (10.5,1.7) .. controls (11,1.7) and (11.5,2) .. (12, 2)-- (18,2); 
\draw[line width=1] (18,1) -- (12,1) .. controls (11.5,1) and (11, 1.3) .. (10.5, 1.3) .. controls (10,1.3) and (9.5, 1) .. (9,1) -- (8,1) .. controls (7.5,1) and (7.2, 1.5) .. (7,0.3) .. controls (6.9, 0) and (6.8, -1.3) .. (6,-1) .. controls (5.5,-1) and (5,-0.6) .. (4, -0.7) .. controls (3,-1) and (2,-1) .. (0,0);

\draw[line width=1] (0,2) .. controls (1,1.5) .. (2,2);
\draw[line width=1] (0.4,1.8) .. controls (1,2) .. (1.6,1.8);
\begin{scope}[shift={(2,-1)}]
\draw[line width=1] (0,2) .. controls (1,1.5) .. (2,2);
\draw[line width=1] (0.4,1.8) .. controls (1,2) .. (1.6,1.8);
\end{scope}

\draw [decorate,decoration={brace,amplitude=4pt},xshift=-4pt,yshift=0pt]
 (12,0.5) -- (8.8,0.5) node [midway,yshift=-13pt, xshift=25pt] {\footnotesize only here the metric differs from $t=\frac{1}{2}$};

\end{scope}

\end{tikzpicture}
\caption{The metric components of $\Xi_{\mathrm{gr}}$: $g$ at time $t=0$, $(\Psi_{se^{-L(g,s)-1}})_*g$ at $t\in [1/4,1/2]$ and $y_{{\rho}(g_{\mathrm{tor}})}(g_{\mathrm{tor}})$ at $t=1$. For $t<1/4$ this is only a pullback with a diffeomorphism. For $t> 1/2$ the map $\Xi_{\mathrm{gr}}$ only changes the metric on $[\rho(g_{\mathrm{tor}}), \tilde{v}]$.}
\label{fig:cup}
\end{figure}

With these notations and choices we obtain: 

\begin{lemma} \label{lem:cup}
 Let  ${\rho}\define \hat{\rho}\colon \Rf\to (0,1]$ as in Proposition~\ref{prop:existencerho}, $L\colon \Rc\to (0,\infty)$, $\Psi\colon (0,1]\times [0,2)\to [0,2)$ and $\hat{\Psi}_u\in \mathrm{Diff}(M)$ ($u\in (0,1]$) as chosen above. Then the map, see Figure~\ref{fig:cup},
 \begin{align*}
  \Xi_{\mathrm{gr}}\colon& \Rc \times [0,1] \to \Rc\\
  \nonumber ((g,s),t)&\mapsto \left\{ \begin{matrix}
       \left((\hat{\Psi}_{1-4t+4tse^{-L(g,s)-1}})_* g,\ s\right) & \text{for }t\in [0,\frac{1}{4})\\[0.2cm]
       \left( (\hat{\Psi}_{se^{-L(g,s)-1}})_* g,\ (4t-1)\rho(g_{\mathrm{tor}})  +(2-4t)s \right) & \text{for }t\in [\frac{1}{4},\frac{1}{2})\\[0.2cm]
      \bigg(\left(1-(2t-1)\kappa_{v,\tilde{v}}\right)(\hat{\Psi}_{se^{-L(g,s)-1}})_*(g) \qquad \\
       \qquad +(2t-1)\kappa_{v,\tilde{v}}y_{{\rho}(g_{\mathrm{tor}})} (g_{\mathrm{tor}}),\ \rho (g_{\mathrm{tor}}))\bigg) & \text{for }t\in [\frac{1}{2},1],
                               \end{matrix}
 \right.
 \end{align*}
 where $v=\Psi_{se^{-L(g,s)-1}}(s)$, $\tilde{v}=\Psi_{se^{-L(g,s)-1}}(se^{-L(g,s)})$ and $g_{\mathrm{tor}}$ is obtained from $(g,s)$ as above, is 
 \begin{enumerate}[(i)]
  \item well-defined and continuous
\item $\Xi_{\mathrm{gr}}(.,0)=\mathrm{id}$
\item  $\Xi_{\mathrm{gr}}((g,s),1)= \Upsilon_\rho(g_{\mathrm{tor}}) \in  \Upsilon_\rho(\Rf)$ for all $(g,s)\in \Rc$.
\end{enumerate}
($g_{\mathrm{tor}}$ was defined in \eqref{eq_gtor}.)
\end{lemma}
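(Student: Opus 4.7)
The plan is to check each of the three pieces of the definition of $\Xi_{\mathrm{gr}}$ in turn, verify the boundary conditions and the matching at $t=1/4$ and $t=1/2$, and identify where the main analytic input enters.

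Property (ii) is immediate: at $t=0$ the parameter of $\Psi$ equals $1$, so $\Psi_{1}=\mathrm{id}$ and the first-piece formula collapses to $(g,s)$. Property (iii) at $t=1$ amounts to verifying that the convex combination $(1-\kappa_{v,\tilde v})(\Psi_{se^{-L(g,s)-1}})_*g + \kappa_{v,\tilde v}\,y_{\rho(g_{\mathrm{tor}})}(g_{\mathrm{tor}})$ equals $y_{\rho(g_{\mathrm{tor}})}(g_{\mathrm{tor}})$ on all of $M$. On $\{r\le \tilde v\}$ this is automatic since $\kappa_{v,\tilde v}\equiv 1$. On $\{r\ge v\}$, $\kappa_{v,\tilde v}\equiv 0$ and the combination is $(\Psi_{se^{-L(g,s)-1}})_*g$. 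But $g_{\mathrm{tor}}=(\Psi_{se^{-L(g,s)-1}})_* Z((g,s), h_{\text{torp}}^{n,k})$ with $Z((g,s), h_{\text{torp}}^{n,k})=g$ on $M\setminus D_{se^{-L(g,s)}}$, so $(\Psi_{se^{-L(g,s)-1}})_*g = g_{\mathrm{tor}}$ on $M\setminus D_{\tilde v}$. Since $v>1$ (because $\Psi_u(u)=1$ with $u=se^{-L(g,s)-1}<s$ and $\Psi_u$ is monotone), on $\{r\ge v\}$ both metrics equal $g_{\mathrm{tor}}$, and hence equal $y_{\rho(g_{\mathrm{tor}})}(g_{\mathrm{tor}})$ (which modifies $g_{\mathrm{tor}}$ only on $D_1$). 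Therefore $\Xi_{\mathrm{gr}}((g,s),1)=\Upsilon_\rho(g_{\mathrm{tor}})\in\Upsilon_\rho(\Rf)$.

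For well-definedness in (i), the first piece is a diffeotopy of pullbacks, and pullback by a diffeomorphism preserves the invertibility of the Dirac operator; the cylindrical-end structure with parameter $s$ is unaffected because $\Psi_u$ is the identity on a neighbourhood of $r=0$, where $g$ already has its cylindrical form. The second piece keeps the metric fixed and only shrinks the cylindrical parameter to $\rho(g_{\mathrm{tor}})$, which is admissible by Remark~\ref{rem:R_metric}(ii). The third piece produces a pointwise convex combination of Riemannian metrics, hence a Riemannian metric; near $r=0$ both ingredients agree with the standard cylindrical model, so the declared parameter $\rho(g_{\mathrm{tor}})$ is a valid cylindrical-end witness. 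Continuity at $t=1/4$ and $t=1/2$ is checked by direct substitution (both adjacent formulas agree at the seam), and continuity within each piece follows from the continuous dependence of $\Psi$, $L$, $\rho$, $\kappa_{v,\tilde v}$, $g_{\mathrm{tor}}$, and $y_\rho$ on their arguments.

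The hard part is showing that the third piece truly stays in $\Rc$, that is, that the Dirac operator remains invertible throughout $t\in[1/2,1]$. This is where Lemma~\ref{lem:L} is used. Undoing the diffeomorphism $\Psi_{se^{-L(g,s)-1}}$, the interpolation on the annular region $\{r\in[\tilde v, v]\}$ corresponds, up to identification, to the cylindrical interpolation between $du^2+\sigma_{n-k-1}+\sigma_k$ and $y_\rho(h_{\text{torp}}^{n,k})$ on $\mathbb R\times S^{n-k-1}\times S^k$ analysed in Lemma~\ref{lem:Lint}, which provides a continuous spectral-gap function $\zeta(\rho)$. Applying Lemma~\ref{lem:L} with $c(g,s)=\zeta(\rho(g_{\mathrm{tor}}))$ then produces the continuous length function $L=L(g,s)$ such that the resulting glued metric, and in fact every interpolate along $t\in[1/2,1]$, has invertible Dirac operator. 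Together with the previous steps this closes the well-definedness check and completes the proof.
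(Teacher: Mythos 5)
Your overall strategy is the paper's: verify the three pieces, match them at the seams $t=\tfrac14$ and $t=\tfrac12$, and reduce invertibility along $t\in[\tfrac12,1]$ to Lemma~\ref{lem:Lint} combined with Lemma~\ref{lem:L} applied with $c=\zeta$. Two of your justifications, however, do not hold as stated. First, for $t\in[\tfrac14,\tfrac12)$ you argue that the second component ``only shrinks'' and invoke Remark~\ref{rem:R_metric}(ii). But $s\in(0,1]$ may well be smaller than $\rho(g_{\mathrm{tor}})$ (which is only constrained to be at most $1/32$), so the value $(4t-1)\rho(g_{\mathrm{tor}})+(2-4t)s$ can \emph{increase} with $t$, and Remark~\ref{rem:R_metric}(ii) gives nothing in that case. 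The correct reason, as in the paper, is that $(\Psi_{se^{-L(g,s)-1}})_*g$ is cylindrical on all of $\{r\le v\}$ with $v=\Psi_{se^{-L(g,s)-1}}(s)>1$, so every value in $(0,1]$ is an admissible second component along the interpolation.

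Second, you locate the $t$-dependent interpolation of the third piece ``on the annular region $\{r\in[\tilde v,v]\}$''. That is the wrong region: on $[\tilde v,v]$ the two metrics $(\Psi_{se^{-L(g,s)-1}})_*g$ and $y_{\rho(g_{\mathrm{tor}})}(g_{\mathrm{tor}})$ actually coincide (both equal the pushed-forward length-$L(g,s)$ standard cylinder, since $g_{\mathrm{tor}}=(\Psi_{se^{-L(g,s)-1}})_*g$ on $M\setminus D_{\tilde v}$ and $y_\rho$ only alters $D_1$, with $\tilde v>1$), so there the convex combination is independent of $t$. The genuine interpolation happens on $\{r\le\tilde v\}$, i.e.\ on the torpedo side, where after the radial identification it is precisely $G_{\rho(g_{\mathrm{tor}}),2t-1}$ from Lemma~\ref{lem:Lint}. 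This localization is not cosmetic: Lemma~\ref{lem:L} applies to each interpolate exactly because the neck $[\tilde v,v]$ remains an untouched standard cylinder of length $L(g,s)$ while all $t$-dependence is confined to the $N$-side, where Lemma~\ref{lem:Lint} provides the uniform spectral gap $\zeta(\rho(g_{\mathrm{tor}}))$; were the interpolation really supported on the neck, the hypothesis of Lemma~\ref{lem:L} would be violated. Incidentally, the agreement on $M\setminus D_{\tilde v}$ (which you essentially prove when treating $\{r\ge v\}$) is also what settles property (iii) on the transition annulus $\tilde v\le r\le v$, a case your verification of (iii) skips. With these corrections your argument coincides with the paper's proof.
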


\begin{proof}(ii) follows directly by from $\hat{\Psi}_1=\mathrm{id}$.\medskip

(iii) We have $y_{\rho(g_{\mathrm{tor}})}(g_{\mathrm{tor}})= (\hat{\Psi}_{se^{-L(g,s)-1}})_*(g) $ on $M\setminus D_{\tilde{v}}$. Thus,  $\Xi_{\mathrm{gr}}((g,s),1)=\Upsilon_\rho(g_{\mathrm{tor}})$.\medskip 

For (i) we note that  $(\tilde{v}, v)\subset (1,2)$. For $t\in [0,1/4)$ the map $\Xi_{\mathrm{gr}}(.,.)$ is just a diffeotopy in the first component starting with $(\hat{\Psi}_1=\mathrm{id})_*g=g$ and such that at $t=1/4$ the set $\{\tilde{v}\leq r\leq v\}\subset M\setminus S$ is a cylindrical part of length $\int_{se^{-L(g,s)}}^s \frac{dr}{r}=L(g,s)$. Moreover, on $r\leq 1$ we have $\Xi_{\mathrm{gr}}((g,s),t))= (\frac{dr^2}{r^2}+\sigma_{n-k-1}+\sigma_k,s)$ by \eqref{eq_cyl_diff} and $\Xi_{\mathrm{gr}}((g,s),t))= \left(\frac{\hat{\Psi}_{se^{-L(g,s)-1}}'(r)^2dr^2}{\hat{\Psi}_{se^{-L(g,s)-1}}(r)^2}+\sigma_{n-k-1}+\sigma_k,s\right)$ on $r\leq v$. In particular, for $t\leq 1/4$ the image of $\Xi_{\mathrm{gr}}$ really lies in $\Rc$.\medskip 

 For $t\in [1/4,1/2)$ the metric is not changed but only the second component is moved to $\rho (g_{\mathrm{tor}})$ which is the second entry of $\Upsilon_{\rho}(g_{\mathrm{tor}})$. This is possible since $(\hat{\Psi}_{se^{-L(g,s)-1}})_*g$ has a cylindrical end w.r.t. $r$ for $r\leq 1$ and since $\rho(g_{\mathrm{tor}})\leq 1$.\medskip 
 
 Let now $t\in [1/2,1]$:  We note that by the choice of $\kappa$  the first component of $\Xi_{\mathrm{gr}}((g,s),t)$  equals $(\hat{\Psi}_{se^{-L(g,s)-1}})_*g$ on $M\setminus D_v$,
 $(\hat{\Psi}_{se^{-L(g,s)-1}})_*g=g_{\mathrm{tor}}$ on $D_{\tilde{v}}\setminus D_v
 $ and $y_{\rho (g_\mathrm{tor})}(g_{\mathrm{tor}})=\frac{dr^2}{r^2}+\sigma_{n-k-1}+\sigma_j$ on $D_{\rho(g_{\mathrm{tor}})}$. Moreover, by \eqref{eq_cyl_diff}  on  $D_{\tilde{v}}$ 
 \begin{align*} \left(1-(2t-1)\kappa_{v, \tilde{v}}\right)(\hat{\Psi}_{se^{-L(g,s)-1}})_* g
       +(2t-1)\kappa_{v, \tilde{v}}y_{\rho(g_{\mathrm{tor}})}(g_{\mathrm{tor}})\\
    = (\tilde{\Psi}_{(g,s)})_*\left((1-(2t-1))\left(\frac{dr^2}{r^2}+\sigma_{n-k-1}+\sigma_k\right)+  (2t-1)y_\rho(h_{\text{torp}}^{n-k})\right)
       \end{align*}
       equals $G_{\rho(g_{\mathrm{tor}}), 2t-1}$ 
        as in Lemma~\ref{lem:Lint} but in other coordinates. 
        Hence, by Lemma~\ref{lem:L} and the choice of $L$, we have that the first component of $\Xi_{\mathrm{gr}}((g,s),t)$ has invertible Dirac operator. Hence, $\Xi_{\mathrm{gr}}((g,s),t) \in \Rc$. This establishes that the map is well-defined. Continuity directly follow by the construction.
\end{proof}

\section{Proof of Theorem~\ref{thm:main}}\label{sec:mainthm}

Theorem~\ref{thm:main} is obtained by standard bordism arguments from Proposition~\ref{prop:main2}. We will give them here for the sake of completeness.\medskip 

Let $n=3$. Then, \cite[VII Thm. 3]{Kirby} implies that $M\sqcup S^3$ bounds a cobordism that only consists of $2$-handles. Hence, $S^3$ can be obtained from $M$ via surgeries of codimension $2$ only. Hence, for each of theses surgeries $k$ equals $1$ and, hence, fulfils the assumptions to Proposition~\ref{prop:main2}. Hence, $\mathcal{R}^{\text{inv}}(M)\cong \mathcal{R}^{\text{inv}}(S^3)$.\medskip

Let now $n>3$ and let $W$ be a spin cobordism from $M$ to $\widetilde{M}$. We can simplify  that $W$ until it is connected and simply connected by doing $0$ and $1$ dimensional surgeries (possible since $W$ is spin). Hence, by \cite[VIII Prop. 3.1]{Kos} $\widetilde{M}$ can be obtained from $M$ via finitely many surgeries of codimension $2\leq n-k\leq n-1$. Then Proposition~\ref{prop:main2} implies Theorem~\ref{thm:main}.

\begin{appendix}
\section{Torpedo metric} \label{app}
Let $h_{m}$ be a Riemannian metric on $\mathbb R^{m}$, $m\geq 2$, such that in standard spherical coordinates  $h_m =d\tilde{r}^2+w(\tilde{r})^2\sigma_{m-1}$ with radial coordinate $\tilde{r}$, $w(\tilde{r})=\tilde{r}$ for $\tilde{r}\leq 1$  and $w(\tilde{r})=1$ for $\tilde{r}\geq 2$. Then, the Dirac operator to $h_m$ is invertible:\medskip 

Since this is a metric with cylindrical end whose link is the standard sphere with invertible Dirac operator, the essential spectrum of $D^{h_m}$ does not contain $0$. Hence, only $L^2$-harmonic spinor can prevent the invertibility of $D^{h_m}$. Assume $\phi\in L^2(\Sigma^{h_m}\mathbb R^m)$ with $D^{h_m}\phi=0$. Let $D^S$ be the Dirac operator $D^{\sigma_{m-1}}$ on $S^{m-1}$ when $m$ is odd and $\text{diag}(D^S, -D^S)$ otherwise. Let $\psi_i$ be an orthonormal basis of eigenspinors. Let $D^S\psi_i=\lambda_i\psi_i$. Then we can expand $\phi=\sum_{i\in \mathbb N} \alpha_i(\tilde{r})\psi_i$. 

By \cite[3.6]{BGM} the Dirac operator is given by
\[-\partial_{\tilde{r}}\cdot D^{h_m}=\frac{1}{w(\tilde{r})}D^S +\frac{m-1}{2} \frac{w'(\tilde{r})}{w(\tilde{r})} +\nabla^{h_m}_{\partial_{\tilde{r}}}.\]
 
 Thus, $D^{h_m}\phi=0$ implies 
 \[ \lambda_i \frac{1}{w(\tilde{r})}\alpha_i(\tilde{r})+ \frac{m-1}{2} \frac{w'(\tilde{r})}{w(\tilde{r})} \alpha_i(\tilde{r})+ \alpha_i'(\tilde{r})=0\]
 for all $i\in \mathbb N$ and hence, $\alpha_i(\tilde{r})= c_i w(\tilde{r})^{-\frac{m-1}{2}} e^{-\int \frac{\lambda_i}{w(\tilde{r})} d\tilde{r}}$ for some $c_i\in \mathbb C$. Since $\lambda_i\neq 0$, $w(\tilde{r})=1$ for large $\tilde{r}$ and $\phi\in L^2$, it is $c_i=0$ or $\lambda_i>0$. Since $w(\tilde{r})=\tilde{r}$ for $\tilde{r}\leq 1$, it is $\alpha_i(\tilde{r})=c_i w(\tilde{r})^{-\lambda_i-\frac{m-1}{2}}$. Thus, in order to have no singularity at $\tilde{r}=0$ we need $c_i=0$ or $\lambda_i\leq -\frac{m-1}{2}$. Altogether this implies $c_i=0$ for all $i$ and hence $\phi=0$.
\end{appendix}


\begin{thebibliography}{10}

\bibitem{Adams}
{\sc Adams, R.~A., and Fournier, J. J.~F.}
\newblock {\em Sobolev spaces}, second~ed., vol.~140 of {\em Pure and Applied
  Mathematics (Amsterdam)}.
\newblock Elsevier/Academic Press, Amsterdam, 2003.

\bibitem{ADH}
{\sc Ammann, B., Dahl, M., and Humbert, E.}
\newblock Surgery and harmonic spinors.
\newblock {\em Adv. Math. 220}, 2 (2009), 523--539.

\bibitem{ADH11}
{\sc Ammann, B., Dahl, M., and Humbert, E.}
\newblock Harmonic spinors and local deformations of the metric.
\newblock {\em Math. Res. Lett. 18}, 5 (2011), 927--936.

\bibitem{BD}
{\sc B\"{a}r, C., and Dahl, M.}
\newblock Surgery and the spectrum of the {D}irac operator.
\newblock {\em J. Reine Angew. Math. 552\/} (2002), 53--76.

\bibitem{BGM}
{\sc B\"{a}r, C., Gauduchon, P., and Moroianu, A.}
\newblock Generalized cylinders in semi-{R}iemannian and {S}pin
              geometry.
\newblock {\em Math. Z. 249 (3)\/} (2005), 545--580.


\bibitem{BERW}
{\sc Botvinnik, B., Ebert, J., and Randal-Williams, O.}
\newblock Infinite loop spaces and positive scalar curvature.
\newblock {\em Invent. Math. 209}, 3 (2017), 749--835.

\bibitem{BHSW}
{\sc Botvinnik, B., Hanke, B., Schick, T., and Walsh, M.}
\newblock Homotopy groups of the moduli space of metrics of positive scalar
  curvature.
\newblock {\em Geom. Topol. 14}, 4 (2010), 2047--2076.

\bibitem{BG}
{\sc Bourguignon, J.-P., and Gauduchon, P.}
\newblock Spineurs, op\'erateurs de {D}irac et variations de m\'etriques.
\newblock {\em Comm. Math. Phys. 144}, 3 (1992), 581--599.

\bibitem{Chernysh}
{\sc Chernysh, V.}
\newblock On the homotopy type of ${R}^+({M})$.
\newblock {\em
  \href{https://arxiv.org/abs/math/0405235}{ar{X}iv:math/0405235v2}\/} (2004).

\bibitem{Dahl.2008}
{\sc Dahl, M.}
\newblock On the space of metrics with invertible {D}irac operator.
\newblock {\em Comment. Math. Helv. 83}, 2 (2008), 451--469.

\bibitem{DG}
{\sc Dahl, M., and Grosse, N.}
\newblock Invertible {D}irac operators and handle attachments on manifolds with
  boundary.
\newblock {\em J. Topol. Anal. 6}, 3 (2014), 339--382.

\bibitem{Ehr}
{\sc Ehrlich, P.~E.}
\newblock Continuity properties of the injectivity radius function.
\newblock {\em Compositio Math. 29\/} (1974), 151--178.

\bibitem{GL}
{\sc Gromov, M., and Lawson, Jr., H.~B.}
\newblock The classification of simply connected manifolds of positive scalar
  curvature.
\newblock {\em Ann. of Math. (2) 111}, 3 (1980), 423--434.

\bibitem{HSS}
{\sc Hanke, B., Schick, T., and Steimle, W.}
\newblock The space of metrics of positive scalar curvature.
\newblock {\em Publ. Math. Inst. Hautes \'{E}tudes Sci. 120\/} (2014),
  335--367.

\bibitem{Hi}
{\sc Hitchin, N.}
\newblock Harmonic spinors.
\newblock {\em Advances in Math. 14\/} (1974), 1--55.

\bibitem{Kirby}
{\sc Kirby, R.~C.}
\newblock {\em {The topology of {$4$}-manifolds}}, vol.~1374 of {\em {Lecture
  Notes in Mathematics}}.
\newblock Springer-Verlag, Berlin, 1989.

\bibitem{Kos}
{\sc Kosinski, A.~A.}
\newblock {\em {Differential manifolds}}, vol.~138 of {\em {Pure and Applied
  Mathematics}}.
\newblock Academic Press, Inc., Boston, MA, 1993.

\bibitem{Mu}
{\sc M\"{u}ller, W.}
\newblock {Eta invariants and manifolds with boundary}.
\newblock {\em J. Differential Geom. 40}, 2 (1994), 311--377.

\bibitem{Ped}
{\sc Pederzani, N.}
\newblock {\em On the spin cobordism invariantce of the homotopy type of the
  space $\RM$}.
\newblock PhD-thesis. Universit\"at Leipzig, 2018.

\bibitem{R07}
{\sc Rosenberg, J.}
\newblock Manifolds of positive scalar curvature: a progress report.
\newblock In {\em Surveys in differential geometry. {V}ol. {XI}}, vol.~11 of
  {\em Surv. Differ. Geom.} Int. Press, Somerville, MA, 2007, pp.~259--294.

\bibitem{Rud}
{\sc Rudin, M.~E.}
\newblock A new proof that metric spaces are paracompact.
\newblock {\em Proc. Amer. Math. Soc. 20\/} (1969), 603.

\bibitem{S14}
{\sc Schick, T.}
\newblock The topology of positive scalar curvature.
\newblock In {\em Proceedings of the {I}nternational {C}ongress of
  {M}athematicians---{S}eoul 2014. {V}ol. {II}\/} (2014), Kyung Moon Sa, Seoul,
  pp.~1285--1307.

\bibitem{T16}
{\sc Tuschmann, W.}
\newblock Spaces and moduli spaces of {R}iemannian metrics.
\newblock {\em Front. Math. China 11}, 5 (2016), 1335--1343.

\bibitem{Wall}
{\sc Wall, C. T.~C.}
\newblock {\em Differential topology}, vol.~156 of {\em Cambridge Studies in
  Advanced Mathematics}.
\newblock Cambridge University Press, Cambridge, 2016.

\bibitem{WI}
{\sc Walsh, M.}
\newblock Metrics of positive scalar curvature and generalised {M}orse
  functions, {P}art {I}.
\newblock {\em Mem. Amer. Math. Soc. 209}, 983 (2011), xviii+80.

\bibitem{Walsh}
{\sc Walsh, M.}
\newblock Cobordism invariance of the homotopy type of the space of positive
  scalar curvature metrics.
\newblock {\em Proc. Amer. Math. Soc. 141}, 7 (2013), 2475--2484.

\bibitem{WII}
{\sc Walsh, M.}
\newblock Metrics of positive scalar curvature and generalised {M}orse
  functions, {P}art {II}.
\newblock {\em Trans. Amer. Math. Soc. 366}, 1 (2014), 1--50.

\end{thebibliography}
\end{document}